\documentclass[11pt]{amsart}

\usepackage{amscd}
\usepackage{amsmath, amssymb}
\usepackage{amsfonts}
\newcommand{\de}{\partial}
\newcommand{\db}{\overline{\partial}}

\newcommand{\ddt}{\frac{\partial}{\partial t}}

\newcommand{\ddbar}{\sqrt{-1} \partial \overline{\partial}}
\newcommand{\Ric}{\mathrm{Ric}}
\newcommand{\ov}[1]{\overline{#1}}
\newcommand{\mn}{\sqrt{-1}}

\newcommand{\tr}[2]{\textrm{tr}_{#1}{#2}}
\newcommand{\ti}[1]{\tilde{#1}}
\newcommand{\vp}{\varphi}

\newcommand{\ve}{\varepsilon}

\renewcommand{\leq}{\leqslant}
\renewcommand{\geq}{\geqslant}
\renewcommand{\le}{\leqslant}
\renewcommand{\ge}{\geqslant}

\newcommand{\be}{\begin{equation}}
\newcommand{\ee}{\end{equation}}

\begin{document}
\newtheorem{claim}{Claim}
\newtheorem{theorem}{Theorem}[section]
\newtheorem{lemma}[theorem]{Lemma}
\newtheorem{corollary}[theorem]{Corollary}
\newtheorem{proposition}[theorem]{Proposition}
\newtheorem{question}{question}[section]
\newtheorem{conjecture}[theorem]{Conjecture}

\theoremstyle{definition}
\newtheorem{remark}[theorem]{Remark}

\title[The Chern-Ricci flow on complex surfaces]{The Chern-Ricci flow on complex surfaces}

\author[V. Tosatti]{Valentino Tosatti$^{*}$}
\thanks{$^{*}$Supported in part by NSF grants DMS-1236969, a Sloan Research Fellowship and by a Blavatnik Award for Young Scientists.}
\address{Department of Mathematics, Northwestern University, 2033 Sheridan Road, Evanston, IL 60201}
\author[B. Weinkove]{Ben Weinkove$^{\dagger}$}
\thanks{$^{\dagger}$Supported in part by NSF grant DMS-1105373.  The second-named author is currently on leave from
 the Mathematics Department of the University of California, San Diego.}
\begin{abstract}
The Chern-Ricci flow
is an evolution equation of Hermitian metrics by their Chern-Ricci form, first introduced by Gill.
Building on our previous work, we investigate this flow on complex surfaces.  We establish new estimates in the case of finite time non-collapsing, anologous to some known results for the K\"ahler-Ricci flow.  This provides evidence that the Chern-Ricci flow carries out blow-downs of exceptional curves on non-minimal surfaces.
 We also describe explicit solutions to the Chern-Ricci flow for various non-K\"ahler surfaces.  On Hopf surfaces and Inoue surfaces these solutions, appropriately normalized, collapse to a circle in the sense of Gromov-Hausdorff.  For non-K\"ahler properly elliptic surfaces, our explicit solutions collapse to a Riemann surface.  Finally, we define a Mabuchi energy functional for complex surfaces with vanishing first Bott-Chern class and show that it decreases along the Chern-Ricci flow.
 \end{abstract}

\maketitle

\section{Introduction}

The Chern-Ricci flow is a flow of Hermitian metrics on a complex manifold by their Chern-Ricci form.  It was
 introduced by Gill \cite{G} in the setting of manifolds with vanishing first Bott-Chern class.  In \cite{TW3}, the authors  proved a number of further properties of the Chern-Ricci flow, several of which are analogous to those of the K\"ahler-Ricci flow.  We continue this study here, but restrict to  complex dimension two where some additional structures can be exploited.
    Our aim is to provide more evidence that the Chern-Ricci flow is a natural evolution equation on complex manifolds and that its behavior reflects the underlying geometry.  Other flows of Hermitian metrics have been previously studied by Streets-Tian \cite{StT1, StT2, StT3}, motivated in part by the open problem of classifying Class VII surfaces.  Ultimately, our hope is that the Chern-Ricci flow may  be used as a tool in classification problems.  However, our goals in the current paper are more modest:  we wish to investigate the behavior of the flow in cases where the geometry of the manifold is already well-understood.

Let $M$ be a compact complex surface, and let $g_0$ be a Gauduchon metric on $M$.  Namely, $g_0$ is a Hermitian metric whose associated $(1,1)$ form $\omega_0 = \sqrt{-1} (g_0)_{i \ov{j}}dz_i \wedge d\ov{z}_j$ satisfies
$$\partial \ov{\partial} \omega_0=0.$$
A well-known result of Gauduchon states that every Hermitian metric on $M$ is conformal to a unique Gauduchon metric.

The \emph{Chern-Ricci flow} $\omega=\omega(t)$ starting at $\omega_0$ is a flow of Gauduchon metrics \begin{equation} \label{crf}
\ddt{\omega} = - \Ric(\omega), \qquad \omega|_{t=0} = \omega_0,
\end{equation}
where $\Ric(\omega)$ is the \emph{Chern-Ricci form} of $\omega = \sqrt{-1} g_{i\ov{j}} dz_i \wedge d\ov{z}_j$, defined by
$$\Ric(\omega) = -\ddbar \log \det g.$$
The Chern-Ricci flow makes sense for metrics which are merely Hermitian, and in any dimension (as do flows proposed in for  example \cite{StT2, LY}).  However, for the purposes of this paper, we will restrict to the case of Gauduchon metrics in complex dimension two.
Note that if $\omega_0$ satisfies the stronger condition of being $d$-closed (i.e. K\"ahler) then  (\ref{crf}) coincides with the K\"ahler-Ricci flow.

It was shown in \cite[Theorem 1.3]{TW3} that a unique maximal solution to (\ref{crf}) exists for $[0,T)$ for a number $T \in (0, \infty]$ determined by $\omega_0$.   If the volume of $M$ with respect to $\omega(t)$ tends to zero as $t \rightarrow T$, we say that the Chern-Ricci flow is \emph{collapsing} at time $T$.  Otherwise, we say that the Chern-Ricci flow is \emph{non-collapsing}.   If $T=\infty$, it is sometimes convenient to normalize the flow and consider $\omega(t)/t$ as $t\rightarrow \infty$.   If the volume of $M$ with respect to $\omega(t)/t$ tends to zero as $t \rightarrow \infty$ we say that the \emph{normalized Chern-Ricci flow} is collapsing.

The first part of this paper is concerned with non-collapsing for the flow in finite time, while in the second part of the paper we give a number of explicit examples of collapsing in both finite and infinite time.  In the last, short, section of the paper we define the Mabuchi energy functional on surfaces with vanishing first Bott-Chern class and show that it is decreasing along the Chern-Ricci flow.

\medskip

{\bf Finite time non-collapsing}. \   A natural conjecture \cite{TW3}, extending results of J. Song and the second-named author in the K\"ahler case \cite{SW1, SW2, SW3}, is that if the Chern-Ricci flow is non-collapsing in finite time, then it blows down finitely many $(-1)$ curves and continues in a unique way on a new complex surface $M$.  We require global Gromov-Hausdorff convergence of the metrics, and smooth convergence away from the $(-1)$ curves.  For more details see  Section \ref{sectionthm2}.  The goal of the first part of this paper is to make some steps towards proving this conjecture.

Suppose that the Chern-Ricci flow is non-collapsing at time $T<\infty$.  Then it was shown in \cite[Section 6]{TW3} that
there exist finitely many disjoint $(-1)$ curves $E_1, \ldots, E_k$ on $M$ giving rise to a map $\pi: M \rightarrow N$ onto a complex surface $N$ blowing down each $E_i$ to a point $y_i \in N$.  Write $M' = M \setminus \bigcup_{i=1}^k E_i$ and $N'= N \setminus \{ y_1, \ldots, y_k\}$.  Then the map $\pi$ gives an isomorphism from $M'$ to $N'$.

Our first result is as follows:

\begin{theorem} \label{thm1}  Suppose that the Chern-Ricci flow (\ref{crf}) is non-collapsing at time $T<\infty$.  Then
with the notation above, as $t \rightarrow T^-$, the metrics $g(t)$ converge to a smooth Gauduchon metric $g_T$ on $M'$ in $C^{\infty}_{\emph{loc}}(M')$.
\end{theorem}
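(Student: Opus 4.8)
The plan is to reduce the flow to a scalar parabolic complex Monge--Amp\`ere equation and then to establish interior higher-order estimates away from the exceptional curves. Since $\Ric(\omega_0) = - \ddbar \log \det g_0$ satisfies $\partial\ov\partial\,\Ric(\omega_0)=0$, the reference forms $\hat\omega_t := \omega_0 - t\,\Ric(\omega_0)$ satisfy $\partial\ov\partial\,\hat\omega_t = 0$ for all $t$, and as in \cite{TW3} the solution of (\ref{crf}) can be written $\omega(t) = \hat\omega_t + \ddbar\vp(t)$ with $\omega(t)>0$, where $\vp = \vp(t)$ solves
\be
\ddt \vp = \log \frac{(\hat\omega_t + \ddbar\vp)^2}{\omega_0^2}, \qquad \vp|_{t=0} = 0 .
\ee
From \cite[Section 6]{TW3} we may take as given: $\vp$ is uniformly bounded on $M \times [0,T)$; the limiting form $\hat\omega_T = \omega_0 - T\,\Ric(\omega_0)$ is a smooth nonnegative $(1,1)$ form, pulled back (up to $\ddbar$ of a function) from a Gauduchon metric on $N$, which is strictly positive precisely on $M' = M \setminus \bigcup_i E_i$ and degenerate along each $E_i$; and the non-collapsing hypothesis is equivalent to $\int_M \hat\omega_T^2 > 0$. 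We also record the global upper bound $\ddt\vp \le C$ on $M \times [0,T)$, obtained from the maximum principle (for instance applied to $(T-t)\ddt\vp + \vp$), which gives $\omega(t)^2 \le C\,\omega_0^2$.

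The core of the argument is a family of estimates on an arbitrary compact $K \subset M'$, with constants $C_K$ uniform in $t \in [0,T)$. First, a lower bound $\ddt\vp \ge -C_K$ on $K \times [0,T)$: this follows from the maximum principle applied to $\ddt\vp - A\vp + \chi$, where $\chi$ is a fixed quasi-plurisubharmonic function with $\ddbar\chi$ bounded below and $\chi \to -\infty$ along $\bigcup_i E_i$, so that the strict positivity of $\hat\omega_T$ on $M'$ dominates the error terms; this is the Chern--Ricci analogue of the arguments of Song and the second-named author in the K\"ahler case \cite{SW1, SW2}. Combined with the upper bound this gives $c_K\,\omega_0^2 \le \omega(t)^2 \le C\,\omega_0^2$ on $K$. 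Next comes the second-order estimate $\tr{\omega_0}{\omega(t)} \le C_K$ on $K$: one runs the Aubin--Yau / parabolic Schwarz-lemma computation for $\log \tr{\omega_0}{\omega} - A\vp + B\chi$, now carrying along the torsion and curvature terms of $\omega_0$ that appear because $\omega_0$ is merely Hermitian, exactly as in \cite{TW3}, with $\chi$ localizing the estimate to $M'$. The trace bound together with the volume lower bound yields $\omega(t) \ge c_K'\,\omega_0$ on $K$, so $\omega(t)$ and $\omega_0$ are uniformly equivalent on every compact subset of $M'$. Standard local parabolic theory on this equation --- the Calabi third-order estimate, or Evans--Krylov followed by Schauder bootstrapping, as used in \cite{TW3} --- then upgrades this to uniform $C^k_{\mathrm{loc}}(M')$ bounds on $\vp(t)$ for every $k$ and all $t \in [0,T)$.

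For convergence, the upper bound for $\ddt\vp$ on $M$ together with its lower bound on $K$ give $|\ddt\vp| \le C_K$ on $K \times [0,T)$, so $t \mapsto \vp(\cdot,t)$ is uniformly Lipschitz in $t$ on $K$; hence $\vp(\cdot,t)$ converges uniformly on $K$ as $t \to T^-$ to a limit $\vp_T \in C^0(M')$. Interpolating against the uniform $C^k_{\mathrm{loc}}(M')$ bounds shows $\vp(\cdot,t) \to \vp_T$ in $C^\infty_{\mathrm{loc}}(M')$ with $\vp_T \in C^\infty(M')$. Setting $\omega_T := \hat\omega_T + \ddbar\vp_T$, we conclude $\omega(t) \to \omega_T$ in $C^\infty_{\mathrm{loc}}(M')$; the limit satisfies $\omega_T \ge c_K'\,\omega_0 > 0$ on each compact $K \subset M'$ and $\partial\ov\partial\,\omega_T = \lim_{t \to T^-} \partial\ov\partial\,\omega(t) = 0$, so $\omega_T$ is the $(1,1)$ form of a smooth Gauduchon metric $g_T$ on $M'$, as claimed.

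The main obstacle is the localized second-order estimate: on a non-K\"ahler surface the Schwarz-lemma computation generates extra first-order torsion terms of no definite sign, and these must be absorbed while simultaneously confining the estimate to $M'$ using only the degenerate positivity of $\hat\omega_T$ and the $C^0$ bound on $\vp$. Obtaining the lower bound for $\ddt\vp$ away from the $E_i$ with constants that do not deteriorate as $t \to T$ is the other delicate point; these are the places where the argument genuinely departs from the K\"ahler treatment of \cite{SW1, SW2}.
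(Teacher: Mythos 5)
The central difficulty of Theorem \ref{thm1} is precisely the step you have assumed away. You write that ``from \cite[Section 6]{TW3} we may take as given'' that $\varphi$ is uniformly bounded on $M\times[0,T)$ and that $\hat\omega_T=\omega_0-T\Ric(\omega_0)$ is a smooth nonnegative $(1,1)$ form, equal up to $\ddbar$ of a function to the pullback of a Gauduchon metric on $N$. Neither statement is available. The second is exactly condition $(*)$ of Theorem \ref{thm2}, which the paper shows (Proposition \ref{cruxes} and Remark \ref{remarkdw}) fails for general $\omega_0$: it is equivalent to $d\omega_0$ being a pullback from $N$, hence to $d\omega_0$ vanishing along the exceptional curves, and one can perturb $\omega_0$ to destroy this. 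The first statement, $|\varphi|\le C$ on all of $M$, is obtained in \cite{TW3} (and in Lemma \ref{lemmaphiagain} here) precisely by invoking $\hat\omega_T\ge 0$; without $(*)$ the only global bound one has is $\varphi\le C$, and the lower bound must be sacrificed near $E$. In short, your argument reproves Theorem 1.6 of \cite{TW3}, i.e.\ the result under the extra hypothesis $(*)$, not Theorem \ref{thm1}.

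What the paper does instead is build a substitute by hand: Lemma \ref{lemmaalpha} uses Buchdahl's Nakai--Moishezon criterion (split into the non-K\"ahler and K\"ahler cases, the latter also needing Lamari's result and Buchdahl's Hodge-index lemma) to produce a Hermitian metric $h$ on $[E]$ and $f\in C^\infty(M)$ with $\alpha_T-\ve R_h+\ddbar f>0$. This is the Tsuji trick adapted to the Hermitian setting. One then works with $\tilde\varphi=\varphi-\ve\log|s|_h^2$, which goes to $+\infty$ along $E$, and obtains $\tilde\varphi\ge -C$ (Lemma \ref{lem}(ii)) rather than $\varphi\ge-C$; the volume lower bound $\omega^n\ge C^{-1}|s|_h^{2A\ve}\omega_0^n$ (Lemma \ref{sch}) and the trace bound $\tr{g_0}g\le C|s|_h^{-2A\ve}$ (Lemma \ref{away}, via the Phong--Sturm quantity $1/(\tilde\varphi+C_0)$) then degenerate at a controlled polynomial rate in $|s|_h$ as one approaches $E$. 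The remainder of your outline --- Gill's local higher-order estimates, Lipschitz-in-$t$ bound from $|\dot\varphi|$, interpolation, and the $\partial\bar\partial$-closedness of the limit --- is fine once these local uniform equivalences are in place, but the entire weight of the theorem rests on replacing the assumption $\hat\omega_T\ge 0$ with Lemma \ref{lemmaalpha}, and that step is missing from your proposal.
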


\begin{remark}
(1) In the K\"ahler case, this result is due to Tian-Zhang \cite{TZ}.

(2)  In \cite[Theorem 1.6]{TW3}, we proved this result under additional hypotheses (equivalent to condition $(*)$ below).

(3) As described in \cite[Theorem 1.5]{TW3}, finite time non-collapsing for the Chern-Ricci flow is a common occurence.  In particular, whenever $M$ is a non-minimal complex surface with Kodaira dimension not equal to $-\infty$, there will be finite time non-collapsing for all choices of initial $\omega_0$.

(4) To prove Theorem \ref{thm1} we establish a version of the so-called \emph{Tsuji trick} \cite{Ts} in the setting of the Chern-Ricci flow and make use of some arguments of   \cite{TZ}.  \end{remark}

For the next result, we recall some notation from \cite{TW3}.  Define a family of $\partial \ov{\partial}$-closed $(1,1)$ forms
\begin{equation} \label{alphat}
\alpha_t := \omega_0 - t \textrm{Ric}(\omega_0), \quad \textrm{for } t\in [0,T].
\end{equation}  The non-collapsing condition is equivalent to the condition $\int_M \alpha_T^2>0$.   The content of our next result is that we can prove a Gromov-Hausdorff convergence result for $(M, g(t))$ as $t \rightarrow T^-$ if $\alpha_T$ is the pull-back of a $(1,1)$ form on $N$, modulo the image of $\partial \ov{\partial}$ on $M$.

\begin{theorem} \label{thm2} Suppose that the Chern-Ricci flow (\ref{crf}) is non-collapsing at time $T<\infty$.    In addition, we impose the condition:
$$(*) \quad \textrm{there exists $f \in C^{\infty}(M,\mathbb{R})$ and a smooth real $(1,1)$ form $\beta$ on $N$ with }$$
$$\alpha_T + \ddbar f = \pi^* \beta,$$
using the notation above.

Then there exists a distance function $d_T$ on $N$ such that $(N, d_T)$ is a compact metric space and $(M, g(t))$ converges in the Gromov-Hausdorff sense to $(N, d_T)$ as $t \rightarrow T^-$.  In particular, the diameter of $(M, g(t))$ is uniformly bounded from above as $t \rightarrow T^-$.
\end{theorem}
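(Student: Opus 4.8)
The plan is to combine the a priori estimates valid under condition $(*)$ (which are essentially those of \cite{TW3}) with new distance estimates near the exceptional curves, and then carry out a Gromov--Hausdorff argument in the spirit of Song and the second-named author \cite{SW1,SW2}. First I would record the reduction of the flow to a parabolic complex Monge--Amp\`ere equation: writing $\omega(t)=\alpha_t+\ddbar\varphi(t)$, the function $\varphi$ solves $\ddt\varphi=\log\big((\alpha_t+\ddbar\varphi)^2/\Omega_0\big)$ with $\varphi|_{t=0}=0$, where $\Omega_0$ is a fixed smooth volume form with $-\ddbar\log\Omega_0=\Ric(\omega_0)$ (e.g. $\Omega_0=\omega_0^2$). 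Under $(*)$, \cite{TW3} provides: a uniform bound $\|\varphi\|_{C^0(M\times[0,T))}\le C$; an upper bound $\ddt\varphi\le C$, hence $\omega(t)^2\le C\,\Omega_0$; and, via Theorem \ref{thm1}, uniform $C^{\infty}_{\mathrm{loc}}(M')$ estimates with $g(t)\to g_T$. Moreover, since $\pi(E_i)=\{y_i\}$, condition $(*)$ gives $\int_{E_i}\alpha_T=\int_{E_i}\pi^*\beta-\int_{E_i}\ddbar f=0$, and as $\int_{E_i}\ddbar\varphi(t)=0$ we obtain $\mathrm{Area}_{g(t)}(E_i)=\int_{E_i}\alpha_t\to 0$ as $t\to T^-$.

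The heart of the argument is a pair of distance estimates. Fix a Gauduchon metric $\omega_N$ on $N$, so that $\pi^*\omega_N$ is $\partial\overline{\partial}$-closed, nonnegative on $M$, and positive on $M'$. \emph{Lower bound.} Applying the parabolic Schwarz-type inequality for the Chern--Ricci flow of \cite{TW3} (which absorbs the Hermitian torsion terms) to $\mathrm{tr}_{\omega(t)}\pi^*\omega_N$, together with the $C^0$ bound on $\varphi$, should yield $\pi^*\omega_N\le C\,\omega(t)$ on all of $M$; hence $d_{g(t)}(x,y)\ge C^{-1/2}\,d_{\omega_N}(\pi(x),\pi(y))$ for all $t$, which will force non-degeneracy of the limiting distance. \emph{Upper bound near $E_i$.} On a fixed neighbourhood $U_i$ of $E_i$, identified via $\pi$ and blow-up coordinates with a neighbourhood of the exceptional curve in $\mathrm{Bl}_0\mathbb{C}^2$ and equipped with a fixed K\"ahler metric $\hat\omega$, a localized parabolic second-order (Aubin--Yau) estimate for the Chern--Ricci flow --- again handling the torsion terms, with an interior cutoff to control the boundary contributions --- fed by the $C^0$ bound on $\varphi$, should give $\mathrm{tr}_{\hat\omega}\omega(t)\le C$, and hence (in complex dimension two) $\omega(t)\le C\,\hat\omega$, on a slightly smaller neighbourhood $U_i'\supset E_i$. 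Interpolating this $L^\infty$ bound against $\mathrm{Area}_{g(t)}(E_i)\to 0$ shows $\mathrm{tr}_{\hat\omega}\omega(t)\to 0$ in $L^p(E_i)$, and a Fubini-type path-selection argument then gives $\diam_{g(t)}(E_i)\to 0$; combined with $\omega(t)\le C\,\hat\omega$ on $U_i'$ this shows that small neighbourhoods of $E_i$ have small $g(t)$-diameter, uniformly for $t$ near $T$. Together with the uniform equivalence of the $g(t)$ on the complement of these neighbourhoods (Theorem \ref{thm1}), we conclude $\diam(M,g(t))\le C$.

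Granting these estimates, the conclusion follows as in \cite{SW1,SW2}. Since $\pi\colon M'\to N'$ is a biholomorphism, transport $g_T$ to a Hermitian metric $h_T$ on $N'$ and let $d_T$ be the induced length distance on $N$; the diameter control near the $E_i$ identifies the metric completion of $(N',d_{h_T})$ with $(N,d_T)$, adding exactly the points $y_i$. Passing to the limit in the estimates above, $d_T$ is finite and satisfies $d_T(p,q)\ge C^{-1/2}d_{\omega_N}(p,q)$, so it is a genuine distance and $(N,d_T)$ is a compact metric space. For the convergence: given $\delta>0$, on $K_\delta:=N\setminus\bigcup_i B^{\omega_N}_\delta(y_i)$ the metrics $g(t)$ converge uniformly to $h_T$, while each $\pi^{-1}(B^{\omega_N}_\delta(y_i))$ has $g(t)$-diameter at most $\eta(\delta)$ with $\eta(\delta)\to 0$, uniformly for $t$ near $T$; hence $\pi$ is an $\varepsilon$-Gromov--Hausdorff approximation from $(M,g(t))$ to $(N,d_T)$ with $\varepsilon\to 0$ as $t\to T^-$ and then $\delta\to 0$, which gives $(M,g(t))\to(N,d_T)$ in the Gromov--Hausdorff sense; the diameter bound is then immediate.

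I expect the main obstacle to be the second-order estimate near the exceptional curves: obtaining the uniform bound on $\mathrm{tr}_{\hat\omega}\omega(t)$ on a fixed neighbourhood of each $E_i$ in the non-K\"ahler setting --- where the Chern--Ricci flow generates torsion terms that must be absorbed into the maximum-principle argument, the boundary terms of the localized estimate must be controlled, and the cohomological and $\partial\overline{\partial}$-lemma devices available in the K\"ahler--Ricci flow case of \cite{SW1,SW2} are out of reach --- and then upgrading it to the statement that neighbourhoods of $E_i$ collapse to points, so that $d_T$ is a metric rather than merely a pseudometric. Once these estimates are established, the construction of $d_T$ and the Gromov--Hausdorff argument are routine.
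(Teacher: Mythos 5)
The core difficulty in this theorem is to control $\omega(t)$ in a fixed neighbourhood of each $E_i$ where $\pi^*\omega_N$ degenerates, and your proposal does not correctly identify either the shape of the available estimates or the mechanism by which condition $(*)$ enters.

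You claim a uniform bound of the form $\operatorname{tr}_{\hat\omega}\omega(t)\le C$, i.e.\ $\omega(t)\le C\hat\omega$, on a fixed neighbourhood $U'_i\supset E_i$, via a localized parabolic Aubin--Yau estimate. This is not attainable (and is not what the paper proves). A local interior $C^2$ estimate for a complex Monge--Amp\`ere type flow requires, in addition to the $C^0$ bound on the potential, a lower bound $\omega(t)\ge c\hat\omega$ on the region where you cut off; but the metrics $\omega(t)$ degenerate along $E$ as $t\to T^-$ (the limit $g_T$ pulls back from a metric on $N$, so it vanishes on $E$), so no such lower bound holds near the exceptional curve, and the cutoff argument cannot be closed. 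What is actually established (already in the K\"ahler case in \cite{SW1}, Lemma 2.5(i), and here in Lemmas \ref{bds} and \ref{lemmaradial}) are the two weaker, divergent estimates
$$\omega(t)\le \frac{C}{|s|_h^{2}}\,\pi^*\omega_N
\qquad\text{and}\qquad
\omega(t)\le \frac{C}{|s|_h^{2(1-\eta)}}\,\omega_0\ \ \text{for some } \eta>0,$$
which allow the metric to blow up at a controlled rate near $E$. The first gives a uniform upper bound on the diameter of the small preimage spheres $\pi^{-1}(S_r)$; the second gives that radial paths have $g(t)$-length $O(|x|^\eta)$. Those two facts, not $\diam_{g(t)}(E_i)\to 0$, are what drive the Gromov--Hausdorff argument of \cite{SW1}. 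Your interpolation ``$L^\infty$ bound $\times$ area$\to 0$ via Fubini'' fails once the $L^\infty$ bound is unavailable, and the conclusion that neighbourhoods of $E_i$ have small $g(t)$-diameter is not established.

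You have also not located the place where $(*)$ actually does work. The observation $\int_{E_i}\alpha_t\to 0$ (area vanishing) holds in general and does not require $(*)$. The genuine use of $(*)$ is the following: after improving $\beta$ to a Gauduchon metric $\omega_N$ on $N$, one obtains $d\omega_0 = \pi^*(d\omega_N)$, hence the torsion identity
$$(T_0)^p_{j\ell}(g_0)_{p\ov{k}}=(\pi^*T_N)^p_{j\ell}(\pi^*g_N)_{p\ov{k}}.$$
This identity is what makes the maximum-principle computation for $\log\operatorname{tr}_{\hat g}g$ with the degenerate reference $\hat g=\pi^*g_N$ close up: the torsion terms of $\hat g$ (which would otherwise be uncontrollable where $\hat g$ degenerates) become the pulled-back torsion of $g_N$ and can be absorbed. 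Even then one still needs the Phong--Sturm trick (adding the $1/(\tilde\varphi+\tilde C)$ and $1/(\tilde\psi+\tilde C)$ terms to the test quantity) to handle the remaining first-order torsion contributions, and the two-step structure of Lemma \ref{lemmaradial} is essential to get the exponent $2(1-\eta)<2$. These are the non-routine parts of the proof; ``routine after the estimates'' does not describe them.

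Finally, the ``lower bound'' $\pi^*\omega_N\le C\,\omega(t)$ you propose to get from a Schwarz lemma is plausible but is not what is used to construct $d_T$: the non-degeneracy of $d_T$ away from the blown-down points follows from the $C^\infty_{\rm loc}(M')$ convergence of Theorem \ref{thm1}, and the metric-space structure of $(N,d_T)$ is obtained by the completion argument of \cite{SW1}, Section 3. The diameter bound for $(M,g(t))$ is then a by-product of (i) and (ii), not something you need to establish separately.
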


\begin{remark}
(1)  In the K\"ahler case, condition $(*)$ holds automatically, and this result is contained in the work of J. Song and the second-named author \cite{SW1}  (see the discussion at the end of Section \ref{sectionthm2}).  Our proof of Theorem \ref{thm2} makes use of several arguments from \cite{SW1}.

(2) It is not difficult to construct initial data $\omega_0$ so that $(*)$ holds.  See Remark \ref{remarki} below.

(3) Condition $(*)$ will not hold for general choices of $\omega_0$.  Indeed we will show in Proposition \ref{cruxes} that it is equivalent to $d\omega_0=\pi^*(d\beta)$ for some $\beta$, which implies that $d\omega_0\equiv 0$ on the exceptional divisors of $\pi$, and this last condition does not hold in general (see Remark \ref{remarkdw} below).

(4) On the other hand there is another condition which is weaker than $(*)$ and always holds, see Proposition \ref{weakstar}.
\end{remark}

We give the proofs of Theorems \ref{thm1} and \ref{thm2} in Sections \ref{sectionthm1} and \ref{sectionthm2} respectively.

\medskip

{\bf Examples of collapsing.}  \ In the second part of this paper we give a number of explicit examples of collapsing for the Chern-Ricci flow on non-K\"ahler surfaces.
First of all recall that as a consequence of the Kodaira-Enriques classification \cite{bhpv}, all minimal non-K\"ahler compact complex surfaces fall into the following classes:
\begin{enumerate}
\item Kodaira surfaces,
\item Minimal non-K\"ahler properly elliptic surfaces,
\item Surfaces of class VII with $b_2(M)=0,$
\item Minimal surfaces of class VII with $b_2(M)>0,$
\end{enumerate}
where a Kodaira surface is a minimal surface with $b_1(M)$ odd and Kodaira dimension $0$, a surface of class VII is a surface with $b_1(M)=1$ and Kodaira dimension $-\infty$, while a properly elliptic surface is an elliptic surface with Kodaira dimension $1$. Furthermore thanks to \cite{Bog, Ko, LYZ, T0}
we know that the surfaces in $(3)$ are all either Hopf surfaces (i.e. with universal cover $\mathbb{C}^2\backslash\{0\}$) or Inoue surfaces \cite{In}. Explicit examples of Gauduchon metrics are known for all surfaces in $(1), (2)$, for all Inoue surfaces and for some Hopf surfaces, thanks to \cite{Va, Wa, GO}. Less explicit Hermitian metrics on some surfaces in $(4)$ were constructed in \cite{FP, LeB, Br2}.

Our goal is to construct explicit solutions of the Chern-Ricci flow on surfaces in $(2), (3)$ (see Remark \ref{remm} (4) for the  class $(1)$), and to determine their
Gromov-Hausdorff limits as time approaches the maximal existence time of the flow.
We consider a family of Hopf surfaces, the Inoue surfaces, and  non-K\"ahler properly elliptic surfaces.

\begin{remark} We do not address class (4) in this paper.  These surfaces are of great interest since, except for the case $b_2(M)=1$ \cite{T1}, they are not yet completely classified. Unfortunately, it appears to be more difficult to write down explicit metrics on these manifolds and we could  not find  solutions to the Chern-Ricci flow along the lines of those we found for (2) and (3).  We plan to investigate class (4)  in future work.
\end{remark}

 First some notation.
Let $H_{\alpha, \beta}$ be the Hopf surface $H_{\alpha, \beta}=(\mathbb{C}^2\setminus \{0\})/\sim$, where
$$(z,w) \sim (\alpha z, \beta w),$$
for complex numbers $\alpha, \beta$ with $|\alpha|=|\beta| \neq 1$.     We will show that for all $H_{\alpha, \beta}$ we can find explicit solutions of the Chern-Ricci flow which collapse to a circle in the sense of Gromov-Hausdorff in finite time.  In the case of Inoue surfaces we will find examples of the normalized Chern-Ricci flow collapsing in infinite time to a circle, and for non-K\"ahler properly elliptic surfaces, collapsing to a Riemann surface.
This is interesting because collapsing to a circle never happens for the K\"ahler-Ricci flow on K\"ahler surfaces,
where collapsed limits spaces always have even real dimension. On the other hand, it is also interesting to compare this to results of Lott for the Ricci flow in real dimension $3$ \cite{Lo1, Lo2}, where the collapsed Gromov-Hausdorff limits at infinity of the normalized Ricci flow on geometric $3$-manifolds (in the sense of Thurston) are determined \cite[Theorem 1.2]{Lo2}. The complex surfaces we consider also have geometric structures (with compatible complex structures), and in fact they are precisely all the surfaces in classes (2) and (3) which have complex geometric structures \cite{Wa}, and the behavior of the Chern-Ricci flow that we discover is very similar to the behavior of the Ricci flow on geometric $3$-manifolds.

More precisely we prove:

\pagebreak[3]
\begin{theorem}\label{surfaces}  We have
\begin{enumerate}
\item[(a)] Let $H=H_{\alpha, \beta}$ be the Hopf surface as described above.  Then there exists an explicit solution $\omega(t)$ of the Chern-Ricci flow on $H$ for $t \in [0,1/2)$ with $$(H,\omega(t))\overset{GH}{\to} (S^1,d), \quad \textrm{as } t \rightarrow 1/2,$$ where $d$ is the standard distance function on the unit circle $S^1\subset\mathbb{R}^2$.
\item[(b)] Let $S$ be any Inoue surface.   Then there exists an explicit solution $\omega(t)$ of the Chern-Ricci flow on $S$ for $t \in [0,\infty)$ with $$\left( S,\frac{\omega(t)}{t} \right) \overset{GH}{\to} (S^1,d), \quad \textrm{as } t \rightarrow \infty,$$ where $d$ is the standard distance function on the unit circle $S^1\subset\mathbb{R}^2$.
\item[(c)] Let $\pi:S\to C$ be any non-K\"ahler minimal properly elliptic surface. Then there exists an explicit solution $\omega(t)$ of the Chern-Ricci flow on $S$ for $t \in [0,\infty)$ with $$\left( S,\frac{\omega(t)}{t} \right) \overset{GH}{\to} (C,d_{\mathrm{KE}}), \quad \textrm{as } t \rightarrow \infty,$$ where $d_{\mathrm{KE}}$ is the distance function on the Riemann surface $C$ induced by an orbifold K\"ahler-Einstein metric $\omega_{\mathrm{KE}}$ on $C$ which satisfies $\Ric(\omega_{\mathrm{KE}})=-\omega_{\mathrm{KE}}$ away from the images of the multiple fibers of $\pi$. We also have that $\pi^*\omega_{\mathrm{KE}}$ is a smooth form on $S$ and $\frac{\omega(t)}{t}\to \pi^*\omega_{\mathrm{KE}}$ smoothly on $S$.
\end{enumerate}
\end{theorem}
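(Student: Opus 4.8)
The plan is to treat the three cases in parallel in four steps: (i)~write down an explicit Gauduchon metric $\omega_0$; (ii)~compute its Chern--Ricci form; (iii)~check that the ``unnormalized path'' $\omega(t):=\omega_0-t\,\Ric(\omega_0)=\alpha_t$ is itself the solution of (\ref{crf}); and (iv)~analyze the (collapsed) Gromov--Hausdorff limit of $(M,\omega(t))$ or $(M,\omega(t)/t)$ as $t\to\Tmax$. For (a), on $H=H_{\alpha,\beta}$ with $|\alpha|=|\beta|$ take the metric with $(g_0)_{i\ov{j}}=\delta_{ij}/|z|^2$ on $\mathbb{C}^2\setminus\{0\}$: it descends to $H$ precisely because $|\alpha|=|\beta|$, it is Gauduchon, has $\det g_0=|z|^{-4}$, hence $\Ric(\omega_0)=2\,\ddbar\log|z|^2$, the pull-back of a Fubini--Study form (the map $[z_1:z_2]$ descends only when $\alpha=\beta$, but $\ddbar\log|z|^2$ descends in all cases). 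For (b), on an Inoue surface $S=(\mathbb{H}\times\mathbb{C})/\Gamma$ take the standard Tricerri-type metric; for $S_M$ it is $\omega_0=\sqrt{-1}\big((\mathrm{Im}\,w)^{-2}\,dw\wedge d\ov{w}+(\mathrm{Im}\,w)\,dz\wedge d\ov{z}\big)$, which descends, is Gauduchon, has $\det g_0=(\mathrm{Im}\,w)^{-1}$, hence $\Ric(\omega_0)=\ddbar\log(\mathrm{Im}\,w)=-\tfrac14\,\omega_{\mathbb{H}}$, where $\omega_{\mathbb{H}}:=\sqrt{-1}(\mathrm{Im}\,w)^{-2}\,dw\wedge d\ov{w}$ is the pull-back of the hyperbolic metric in $w$ (the $S^{(+)}$, $S^{(-)}$ types are handled the same way). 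For (c), on a non-K\"ahler minimal properly elliptic $\pi:S\to C$ use its structure as a quasi-bundle $(B\times E)/G$ with all smooth fibres $\cong$ a fixed elliptic curve $E$, together with the orbifold K\"ahler--Einstein metric $\omega_{\mathrm{KE}}$ on $C=B/G$ with $\Ric(\omega_{\mathrm{KE}})=-\omega_{\mathrm{KE}}$ (it exists since $C$ has negative orbifold Euler characteristic); set $\omega_0=\pi^*\omega_{\mathrm{KE}}+\sqrt{-1}\,dv\wedge d\ov{v}$, the induced semi-flat metric with $v$ a relative flat fibre coordinate. One checks $\omega_0$ is a smooth Gauduchon metric on $S$ --- in particular $\pi^*\omega_{\mathrm{KE}}$ is smooth, the cone points of $\omega_{\mathrm{KE}}$ over the images of the multiple fibres disappearing after pull-back --- and $\Ric(\omega_0)=-\pi^*\omega_{\mathrm{KE}}$.

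In each case $\det g(t)=c(t)\,\det g_0$ for a positive \emph{constant} $c(t)$, namely $1-2t$, $1+t/4$, $1+t$ respectively: this is a short computation using the special (homogeneous or semi-flat) structure of $\omega_0$, and it is in fact forced, since $\log(\det g(t)/\det g_0)$ is pluriharmonic and $M$ is compact. Hence $\Ric(\omega(t))=\Ric(\omega_0)$ and $\ddt\omega(t)=-\Ric(\omega_0)=-\Ric(\omega(t))$, so $\omega(t)=\alpha_t$ solves (\ref{crf}); it is positive for $t\in[0,1/2)$ in (a) --- with $\vol(H,\omega(t))\to0$, so $\Tmax=1/2$ and the flow is collapsing --- and for all $t\ge0$ in (b), (c), so $\Tmax=\infty$. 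For the Gromov--Hausdorff limits, case (c) is of the familiar collapsing type (cf.\ \cite{SW1}): $\omega(t)/t=\tfrac{1+t}{t}\pi^*\omega_{\mathrm{KE}}+\tfrac1t\sqrt{-1}\,dv\wedge d\ov{v}\to\pi^*\omega_{\mathrm{KE}}$ smoothly on $S$, the fibres carry the flat metric $\tfrac1t\sqrt{-1}\,dv\wedge d\ov{v}$ of area $O(1/t)$ and collapse uniformly while the base keeps $\omega_{\mathrm{KE}}$, so $\pi$ is an $\varepsilon$-approximation for $t\gg1$ and $(S,\omega(t)/t)\overset{GH}{\to}(C,d_{\mathrm{KE}})$.

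Cases (a), (b) are subtler, since the ``naive'' collapse --- quotienting out the directions in which $\omega(t)$ (resp.\ $\omega(t)/t$) degenerates --- would produce a torus rather than a circle. For (a), on the universal cover one has $g(t)=d\sigma^2+(1-2t)\,d\psi^2+Q_t$ in real coordinates $(\sigma,\psi,\phi_1,\phi_2)$, $\sigma=\log|z|$, where $Q_t$ is a positive-definite form in $d\phi_1,d\phi_2$ with $\det Q_t=\cos^2\psi\,\sin^2\psi\,(1-2t)$; hence $g(t)\to d\sigma^2+\eta^2$, with $\eta=\cos^2\psi\,d\phi_1+\sin^2\psi\,d\phi_2$ restricting to the standard \emph{contact} form on each level $\{\sigma=\mathrm{const}\}\cong S^3$. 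Since $\ker\eta$ is bracket-generating, Chow--Rashevskii joins any two points of a level by a horizontal path, of $g(t)$-length $O(\sqrt{1-2t})$; thus $\diam_{g(t)}\{\sigma=\mathrm{const}\}\to0$ while $\sigma$ is $1$-Lipschitz, so $(H,\omega(t))\overset{GH}{\to}$ a round circle (of circumference $\bigl|\log|\alpha|\bigr|$; a constant rescaling of $\omega_0$ makes it the unit circle). For (b), $\omega(t)/t\to-\Ric(\omega_0)=\tfrac14\,\omega_{\mathbb{H}}$, the pull-back via $\log\mathrm{Im}\,w$ of a metric on $\mathbb{R}/(\log\alpha)\mathbb{Z}$; the fibres of the induced fibration $S\to S^1$ are flat $3$-tori $\mathbb{R}^3/\Lambda$ (for $S_M$; similarly in the other types) on which $\tfrac14\omega_{\mathbb{H}}$ is a rank-one pseudometric whose null leaves are \emph{dense} ($\Lambda$ projects to a dense subgroup of $\mathbb{R}$ in the relevant direction, by the irrationality of the eigenvalue $\alpha$ defining $\Gamma$), so these fibres collapse and $(S,\omega(t)/t)\overset{GH}{\to}S^1$. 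In the collapsed cases one makes the convergence precise by exhibiting explicit $\varepsilon$-Gromov--Hausdorff approximations onto the limiting circle.

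The main obstacle is this last step in cases (a) and (b): identifying the limit, and proving that the collapse goes all the way down to a $1$-dimensional space. Because the limiting object is only a degenerate $(1,1)$-form whose null distribution is non-integrable in (a) and has non-closed leaves in (b), one cannot merely quotient by a foliation; one must exploit the sub-Riemannian geometry of the limit, and the technical core is the quantitative, base-uniform diameter estimate for the collapsing fibres. Minor additional points are: arranging in (c) that $\Ric(\omega_0)$ is exactly $-\pi^*\omega_{\mathrm{KE}}$ and that $\pi^*\omega_{\mathrm{KE}}$ extends smoothly over the multiple fibres; and treating the three families of Inoue surfaces uniformly in (b).
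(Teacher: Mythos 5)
Your parts (a) and (b) follow the paper's strategy faithfully: the Hopf metric $\omega_H=\delta_{ij}/r^2$, the observation that the limit form $\omega(1/2)$ has kernel the standard contact distribution on $S^3$, and the bracket-generating / Chow--Rashevskii (Carath\'eodory) argument giving a sub-Riemannian fibre-diameter estimate of order $\sqrt{1-2t}$ are exactly the paper's proof, and your coordinate computation of $\det Q_t=\cos^2\psi\sin^2\psi(1-2t)$ is correct. For (b), your route to the density of the null leaves in the $T^3$ (resp.\ $X$) fibres --- via irrationality of the eigenvalue $\alpha$ and the lattice $\Lambda$ --- differs from the paper's, which argues that a closed leaf would be a holomorphic curve, contradicting Inoue's theorem; the paper explicitly notes that your direct approach is an acceptable alternative (cf.\ \cite[Proposition V.19.1]{bhpv}). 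You do skip the uniform-in-basepoint diameter estimate and the equivariant Gromov--Hausdorff reduction needed for $S^-_{N,p,q,r}$, but the key ideas are in place.

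Part (c), however, has a genuine gap. You take $\omega_0=\pi^*\omega_{\mathrm{KE}}+\sqrt{-1}\,dv\wedge d\overline v$ with $v$ ``a relative flat fibre coordinate,'' asserting this is a globally defined Gauduchon metric, and you describe $S$ as a finite quotient $(B\times E)/G$ of a product. Neither is correct. A minimal non-K\"ahler properly elliptic surface is the quotient of $H\times\mathbb{C}^*$ by a subgroup of $SL(2,\mathbb{R})\times\mathbb{C}^*$ acting by
$(z,w)\mapsto\bigl(\tfrac{az+b}{cz+d},\,(cz+d)\,w\,t\bigr)$; no finite cover is a product $B\times E$ (the $b_1$-odd condition is exactly a non-trivial twist). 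The fibre ``coordinate'' $v=\log w$ transforms as $dv\mapsto\frac{c\,dz}{cz+d}+dv$, so $\sqrt{-1}\,dv\wedge d\overline v$ picks up $dz\wedge d\overline w$ cross terms and is \emph{not} $\Gamma$-invariant. The correct invariant object is the $(1,0)$-form $-\tfrac{2}{w}dw+\tfrac{\sqrt{-1}}{y}dz$, and the resulting Gauduchon metric (Vaisman's metric, formula \eqref{metric2}) necessarily carries cross terms; these persist for all $t$ along the flow. Your subsequent arithmetic ($\Ric(\omega_0)=-\pi^*\omega_{\mathrm{KE}}$, $c(t)=1+t$, the fibres carrying an $O(1/t)$ flat metric) is all conditioned on the product structure and so does not apply as stated. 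The conclusions are of course still true --- the degenerate part of $\omega(t)/t$ \emph{restricts} to a fixed flat metric on each fibre, which is enough to get fibre collapse --- but to make this rigorous you need to start from the actual Vaisman metric, verify it is Gauduchon, compute $\det g_0=4/(y^2|w|^2)$, and then observe that although $\omega(t)$ is never block-diagonal, its restriction to each fibre is $t$-independent while the base contribution grows linearly. The orbifold issue (multiple fibres) then requires one further step --- passing to the finite elliptic-bundle cover $S'\to S$ and using equivariant Gromov--Hausdorff convergence --- which you mention only in passing.
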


\begin{remark}\label{remm}
(1) It was shown in \cite[Theorem 1.5]{TW3} that for any initial $\omega_0$, the Chern-Ricci flow  collapses in finite time for all Hopf surfaces (e.g. $H_{\alpha, \beta}$ as above) and the normalized Chern-Ricci flow collapses in infinite time for all Inoue surfaces and properly elliptic surfaces.

(2) The example in (a) above was given already in \cite[Proposition 1.8]{TW3}.  What is new here is that we prove the Gromov-Hausdorff convergence to a circle. In fact, we will prove the same result for a family of higher-dimensional Hopf manifolds.

(3) The example in (c) should be compared with Song-Tian's results on the behavior of the K\"ahler-Ricci flow on a K\"ahler elliptic surface $\pi:S\to C$ over a curve $C$ of genus at least $2$  \cite{ST} (see also \cite{FZ, GTZ}).

(4) It is also not difficult to write down explicit, but less interesting, solutions of the Chern-Ricci flow on the Kodaira surfaces.  Indeed, there are explicit Chern-Ricci flat Gauduchon metrics on all these manifolds \cite[(1.3)]{Va} and these give trivial solutions to the flow.  In general, it was shown by Gill \cite{G} that, in any dimension, whenever the first Bott-Chern class vanishes the Chern-Ricci flow converges to a Chern-Ricci flat metric.  Gill's result makes use of the $C^0$ estimate of the authors \cite{TW2} for the Hermitian complex Monge-Amp\`ere equation (see also \cite{Ch, GL, DK, Bl}).
\end{remark}

The proof of Theorem \ref{surfaces} occupies Sections \ref{secthopf}, \ref{sectinoue1}, \ref{sectinoue2}, \ref{sectinoue3} and \ref{sectelliptic}.

\bigskip

{\bf The Mabuchi energy.} \
Finally, in Section \ref{sectionmab}, we give a definition of  the Mabuchi energy functional for complex surfaces with vanishing first Bott-Chern class.   The Mabuchi energy is a well-known object in K\"ahler geometry.
 We show that this functional decreases along the Chern-Ricci flow.  This result can be used to give an alternative proof of the convergence part of a theorem of Gill \cite{G}, in the case of Gauduchon surfaces.

\section{Proof of Theorem \ref{thm1}} \label{sectionthm1}

Suppose we are in the setting of Theorem \ref{thm1}.
Let $\omega(t)$ be the solution of the Chern-Ricci flow (\ref{crf}), which by assumption exists for $t\in [0,T)$ with $0<T<\infty$.
As stated in the introduction the non-collapsing condition implies that there is a surjective holomorphic map $\pi: M \rightarrow N$ blowing down disjoint $(-1)$ curves $E_1, \ldots, E_k$.  Indeed  the exceptional curves $E_i$ are precisely the irreducible curves on $M$ satisfying $\int_{E_i} \alpha_T=0$ for $\alpha_T = \omega_0 - T \textrm{Ric}(\omega_0)$ (see Section 6 of \cite{TW3}).
 For simplicity we assume that there is just one $(-1)$ curve $E$ which gets mapped by $\pi$ to the point $y_0 \in N$ (the general case follows from the same proof).

We first will pick some good reference metrics $\hat{\omega}_t$.  To do so, we need the following lemma, which is an essential ingredient in establishing a version of Tsuji's trick \cite{Ts} for the Chern-Ricci flow (see Lemma \ref{lem}, part (ii) and Lemmas \ref{sch} and \ref{away} below).

\begin{lemma} \label{lemmaalpha}  There exists a smooth Hermitian metric $h$ on the line bundle $[E]$ such that for any sufficiently small $\ve>0$ we can find a smooth function $f$ on $M$ such that
\begin{equation}
\alpha_T - \ve R_h + \ddbar f>0,
\end{equation}
where $R_h$ is the curvature of $h$ and $\alpha_t$ is given by (\ref{alphat}).
\end{lemma}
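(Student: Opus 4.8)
The plan is to choose $h$ once and for all, and then, for each small $\ve>0$, to produce $f$ by applying a Nakai--Moishezon-type positivity criterion to the $\ddb$-closed form $\alpha_T-\ve R_h$. First I collect the relevant facts about $\alpha_T$. Integrating the flow we have $\omega(t)=\alpha_t+\ddbar\varphi_t>0$ for a smooth $\varphi_t$ on $[0,T)$, and since $\alpha_T=\alpha_t-(T-t)\Ric(\omega_0)$ this gives, for every $\delta>0$, that $\alpha_T+\delta\omega_0+\ddbar\varphi_t=\omega(t)-(T-t)\Ric(\omega_0)+\delta\omega_0>0$ whenever $T-t$ is small relative to $\delta$; hence $\alpha_T$ is nef and $\int_C\alpha_T\ge0$ for every irreducible curve $C$. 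By \cite[Section 6]{TW3}, $E$ is a $(-1)$-curve with $\int_E\alpha_T=0$ and is the \emph{only} irreducible curve with vanishing $\alpha_T$-integral, so $\int_C\alpha_T>0$ for all irreducible $C\ne E$; the non-collapsing hypothesis is precisely $\int_M\alpha_T^2>0$; and $\int_M\alpha_T\wedge\omega_0=\lim_{t\to T}\int_M\omega(t)\wedge\omega_0>0$ (nef-ness gives $\ge0$, and $\int_M\alpha_T^2>0$ rules out equality with $0$).

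Now I fix $h$. Since $[E]|_E$ has degree $E\cdot E=-1<0$, a standard construction in the blow-up model near $E$ provides a smooth metric $h$ on $[E]$ with $-R_h>0$ on a neighbourhood of $E$ (for the present lemma any metric on $[E]$ would already do, but this $h$ will be reused later). Writing $\int_C R_h=c_0\,(C\cdot E)$ for a fixed $c_0>0$, set $\chi_\ve:=\alpha_T-\ve R_h$, which is $\ddb$-closed. As $\ve\to0^+$ we have $\int_M\chi_\ve^2\to\int_M\alpha_T^2>0$ and $\int_M\chi_\ve\wedge\omega_0\to\int_M\alpha_T\wedge\omega_0>0$, so both are positive for $\ve$ small; and $\int_C\chi_\ve=\int_C\alpha_T-\ve c_0(C\cdot E)$, which is $\ve c_0>0$ for $C=E$, equals $\int_C\alpha_T>0$ when $C\cdot E=0$, and is positive for the remaining curves provided $\ve<\ve_0:=c_0^{-1}\inf\{\int_C\alpha_T/(C\cdot E):C\ne E,\ C\cdot E>0\}$. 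The key numerical point is that this infimum is strictly positive: this is the Hermitian counterpart of the fact that $\pi^*(\text{ample})-\ve[E]$ is ample on a one-point blow-up for small $\ve$, and is checked using that $E$ is the unique $\alpha_T$-null curve and that $E^2=-1$. Thus for $\ve$ small $\chi_\ve$ is a $\ddb$-closed real $(1,1)$-form with positive self-intersection, positive integral on every irreducible curve, and positive pairing with the Gauduchon metric $\omega_0$, and a Nakai--Moishezon criterion for the Gauduchon cone of a compact complex surface --- the $\ddb$-closed analogue of the K\"ahler-cone theorems of Buchdahl and Lamari --- then yields $f\in C^\infty(M,\mathbb{R})$ with $\alpha_T-\ve R_h+\ddbar f=\chi_\ve+\ddbar f>0$.

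I expect the substance of the proof to be concentrated in this last step. Because $M$ need not be K\"ahler and $\alpha_T$ is only $\ddb$-closed --- in general $d\alpha_T=d\omega_0\ne0$, and no $\ddb$-lemma is available --- the classical K\"ahler-cone criteria do not apply verbatim, and one needs a positivity criterion valid for $\ddb$-closed forms. A concrete route is to exploit bigness ($\int_M\chi_\ve^2>0$) to construct, via a Demailly-type regularisation, a positive $\ddb$-closed current in the class of $\chi_\ve$ with analytic singularities supported on its null locus --- which lies in $E$ by the computation above --- and then to cancel these singularities at the cost of a further, arbitrarily small, multiple of $[E]$; one must check that the pluripotential-theoretic input goes through for $\ddb$-closed currents (in complex dimension two this distinction is essentially invisible) and confirm the positivity of $\ve_0$ from the description of the contraction in \cite[Section 6]{TW3}. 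If a Hermitian Nakai--Moishezon theorem is available off the shelf one can of course quote it directly instead.
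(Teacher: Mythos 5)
Your overall architecture matches the paper's: establish the positivity data for $\alpha_T$, perturb to $\alpha_{T,\ve}=\alpha_T-\ve R_h$, verify the hypotheses of a Nakai--Moishezon criterion for $\partial\bar\partial$-closed forms, and conclude. You also correctly identify that the criterion needed is Buchdahl's (\cite{Bu2}), and your derivation of nef-ness of $\alpha_T$ by integrating the flow is a nice self-contained substitute for citing \cite[Section 6]{TW3}. But the step you flag as ``the key numerical point'' --- that
$$\ve_0:=c_0^{-1}\inf\Bigl\{\tfrac{\int_C\alpha_T}{C\cdot E}:\ C\ne E,\ C\cdot E>0\Bigr\}>0$$
--- is indeed where the real content lies, and your proposed justification (``is checked using that $E$ is the unique $\alpha_T$-null curve and that $E^2=-1$'') does not actually close it. When $M$ is K\"ahler there can be infinitely many irreducible curves of negative self-intersection meeting $E$, so one needs to control $\int_C\alpha_T$ from below uniformly along an infinite family, and nothing in the uniqueness of the $\alpha_T$-null curve gives this directly.

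The paper sidesteps this by splitting into two cases. When $M$ is non-K\"ahler, it has only finitely many curves of negative self-intersection (\cite[Remark 3.3]{T2}), so the infimum is over a finite set and your argument is essentially the paper's. When $M$ is K\"ahler, the paper does not attempt a direct lower bound at all; instead it applies Buchdahl's decomposition \cite[Corollary 9]{Bu1} to write $\alpha_T+\partial\gamma+\ov{\partial\gamma}=\pi^*\beta$ for a $d$-closed form $\beta$ on $N$, proves that $[\beta]$ is a K\"ahler class on $N$ via the Buchdahl--Lamari criterion (this requires the intersection computations $\int_N\beta^2\ge\int_M\alpha_T^2>0$, $\int_C\beta>0$, $\int_N\beta\wedge(\omega_N+\delta\beta)>0$), and then \emph{chooses} $h$ so that $\pi^*\tilde\beta-\ve R_h$ is a genuine K\"ahler form on $M$ for small $\ve$. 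With this choice of $h$ the curve-positivity $\int_C\alpha_{T,\ve}>0$ is automatic, since $\alpha_{T,\ve}$ differs from the K\"ahler form $\pi^*\tilde\beta-\ve R_h$ only by exact terms. In other words, the paper proves a structural statement (pullback of a K\"ahler class, after suitable modification) that implies your numerical claim, rather than proving the numerical claim directly; and the choice of $h$ is made at the end, tied to the output of the K\"ahler-cone argument on $N$, not fixed a priori as a metric with $-R_h>0$ near $E$. Your second paragraph's suggestion of rebuilding a Hermitian Nakai--Moishezon theorem via Demailly regularisation is unnecessary: Buchdahl's \cite{Bu2} already handles $\partial\bar\partial$-closed forms on surfaces, and the paper simply quotes it once the claim is in hand.

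So: the non-K\"ahler half of your argument is correct and coincides with the paper's; the K\"ahler half has a genuine gap at the infimum estimate, which the paper closes by a different and more structural route.
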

\begin{proof}
The non-collapsing condition together with the results of \cite[Section 6]{TW3} imply that
$\alpha_T = \omega_0 - T \textrm{Ric}(\omega_0)$ satisfies
$$\int_M \alpha_T^2 >0, \quad \int_E \alpha_T =0, \quad \int_C\alpha_T>0,$$
for all irreducible curves $C\subset M$ different from $E$.
We also have that
$$\int_M \alpha_T \wedge \omega' >0,$$
for any Gauduchon metric $\omega'$ on $M$. Indeed,
$$\int_M \alpha_T \wedge \omega'=\lim_{t\to T^-}\int_M \omega(t)\wedge \omega'\geq 0,$$
and the case $\int_M \alpha_T \wedge \omega'=0$ cannot happen since Buchdahl's ``Hodge Index Theorem"  \cite[Lemma 4]{Bu1} would imply that
$\int_M \omega'^2\leq 0$.

Let $h$ be a smooth Hermitian metric on the line bundle $[E]$.  Since $[E]$ has self-intersection $-1$, its
 curvature $R_h$ satisfies $\int_E  R_h =-1$.  If we pick $\ve>0$ small enough and we put $\alpha_{T, \ve} = \alpha_T - \ve R_h$ then
$$\int_M \alpha_{T, \ve}^2 >0, \quad \int_M \alpha_{T, \ve} \wedge \omega' >0.$$
We claim that, after possibly changing the Hermitian metric $h$ and choosing $\ve$ slightly smaller,
\begin{equation} \label{claimCalphaT}
\int_C \alpha_{T, \ve}>0, \quad \textrm{for all irreducible curves $C$ with $C^2<0$.}
\end{equation}
Given the claim, it follows from  Buchdahl's Nakai-Moishezon criterion \cite{Bu2} that there exists a smooth function $f$ (depending on $\ve$) such that $\alpha_{T, \ve} + \ddbar f >0$, as required.  We separate the proof of  the claim into two cases:

\emph{(i) $M$ is non-K\"ahler}. \ In this case $M$ has only finitely many curves of negative self-intersection (see Remark 3.3 in \cite{T2}).
Let $C$ be any such curve.   Then either $C=E$ or else $\int_C \alpha_T>0$, because we know that $E$ is the only curve whose intersection with $\alpha_T$ is zero.
We have that $\int_E \alpha_{T,\ve}=-\ve E\cdot E=\ve>0$, and if $C$ is different from $E$ then
$$\int_C \alpha_{T,\ve}=\int_C\alpha_T -\ve C\cdot E.$$
Since there are only finitely many such curves $C$, it follows that we can choose $\ve>0$ small so that
$$\int_C \alpha_{T,\ve}>0,$$
for all such $C$.
This completes the proof of the claim (\ref{claimCalphaT}) in the non-K\"ahler case.

\emph{(ii) $M$ is K\"ahler}. \ We assume now that $M$ is a K\"ahler surface, which implies also that $N$ is K\"ahler, and fix K\"ahler metrics $\omega_M, \omega_N$ on $M, N$ respectively.  We  apply Buchdahl's Corollary 9 in \cite{Bu1} to the $\partial \ov{\partial}$-closed $(1,1)$ form $\alpha_T$ which shows that there exists a $(0,1)$ form $\gamma$ and a $d$-closed $(1,1)$ form $a$ such that
$$\alpha_T + \partial \gamma + \ov{\partial \gamma} = a.$$
By the definition of the blow-down map $\pi$, we may write the deRham class $[a]$ as $[a] = [\pi^* \beta] + c[E]$ for a $d$-closed $(1,1)$ form $\beta$ on $N$ and some $c \in \mathbb{R}$ (see for example \cite[Theorem I.9.1]{bhpv}).  Intersecting with $E$ we see that $c=0$ and hence
\begin{equation}\label{alphaTbeta}
\alpha_T + \partial \gamma + \ov{\partial \gamma} = \pi^* \beta,
\end{equation}
for a possibly different form $\gamma$.

We wish to show that $[\beta]$ is a K\"ahler class on $N$, and we will use the Nakai-Moiszhezon criterion in the K\"ahler case due to Buchdahl \cite{Bu1} and Lamari \cite{La}.  From (\ref{alphaTbeta}) we infer that
\begin{equation}\label{del}
\de\alpha_T+\de\ov{\de\gamma}=0, \qquad
\db\alpha_T+\db\de\gamma=0.
\end{equation}

For any irreducible curve $C\subset N$ with $C^2<0$ we have
\begin{equation}\label{onebeta}
\int_C \beta=\int_{\pi^*C} (\alpha_T+\de\gamma+\ov{\de\gamma})=\int_{\pi^*C} \alpha_T>0,
\end{equation}
using Stokes' Theorem.

Next calculate
$$\int_N \beta^2=\int_M \pi^*\beta^2=\int_M \alpha_T^2+2\int_M\alpha_T\wedge(\de\gamma+\ov{\de\gamma})
+2\int_M \de\gamma\wedge\ov{\de\gamma}.$$
Using \eqref{del} we have
$$2\int_M \alpha_T\wedge \de\gamma=-2\int_M \de\alpha_T\wedge\gamma=
2\int_M \de\ov{\de\gamma}\wedge\gamma=-2\int_M \ov{\de\gamma}\wedge\de\gamma=
-2\int_M \de\gamma\wedge\ov{\de\gamma},$$
and
$$2\int_M \alpha_T\wedge \ov{\de \gamma}=-2\int_M \db\alpha_T\wedge\ov{\gamma}=
2\int_M \db\de\gamma\wedge\ov{\gamma}=
-2\int_M \de\gamma\wedge\ov{\de\gamma},$$
and so
$$\int_N \beta^2=\int_M \alpha_T^2-2\int_M \de\gamma\wedge\ov{\de\gamma}.$$
But because $\int_M\omega_M^2>0$ and $\int_M \omega_M \wedge (\de\gamma+\ov{\de\gamma})=0$
we can apply  \cite[Lemma 4]{Bu1} and conclude that
$$\int_M (\de\gamma+\ov{\de\gamma})^2=2\int_M  \de\gamma\wedge\ov{\de\gamma}\leq 0,$$
and so
\begin{equation}\label{threebeta}
\int_N \beta^2\geq \int_M \alpha_T^2>0.
\end{equation}
Next,
$$\int_N \beta\wedge\omega_N=\int_M \pi^*\beta\wedge\pi^*\omega_N=
\int_M \alpha_T\wedge\pi^*\omega_N = \lim_{t\to T^-}\int_M \omega(t)\wedge \pi^*\omega_N\geq 0,$$
since $d\omega_N=0$.  For $\delta>0$ sufficiently small $\omega_N + \delta \beta$ is K\"ahler and
\begin{equation} \label{twobeta}
\int_N \beta \wedge (\omega_N + \delta \beta) \ge \delta \int_N \beta^2 >0.
\end{equation}
Therefore combining \eqref{onebeta},  \eqref{threebeta} and \eqref{twobeta} we can
apply the Nakai-Moishezon criterion of Buchdahl and Lamari to conclude that there exists a function $f$ on $N$
such that $\ti{\beta}=\beta+\ddbar f$ is K\"ahler on $N$. It follows from the construction of the blow-down map (see for example \cite[p.187]{GH}) that we may pick a Hermitian metric $h$ on $[E]$ such that
$\pi^*\ti{\beta}-\ve R_h$ is K\"ahler on $M$ for all $\ve>0$ small. Then with this choice of $h$, for any irreducible curve $C\subset M$
with $C^2<0$, we have
$$\int_C \alpha_{T, \ve} = \int_C (\alpha_T -\ve R_h)=\int_C (\pi^*\ti{\beta}-\ve R_h)>0,$$
because $\int_C \de\gamma=0$ by Stokes' Theorem.  This finishes the proof of the claim (\ref{claimCalphaT}) and the lemma.
\end{proof}

Define $\hat{\omega}_T = \alpha_T + \ddbar f$ with $f$ given by the lemma.  Note that in general $\hat{\omega}_T$ is not a metric, but by the lemma $\hat{\omega}_T - \ve R_h$ is a metric.  Define reference forms $\hat{\omega}_t = \frac{1}{T} ( (T-t)\omega_0 + t \hat{\omega}_T)$.  By shrinking $\ve>0$ if necessary we may assume without loss of generality that $\omega_0 - \ve R_h > \frac{1}{2} \omega_0$.   Hence
\begin{equation} \label{ohatlb}
\hat{\omega}_t - \ve R_h = \frac{1}{T} ((T-t) (\omega_0 - \ve R_h) + t (\hat{\omega}_T - \ve R_h)) \ge c_0 \omega_0 >0,
\end{equation}
for some $c_0>0$.  Note that from now on we assume that $\ve>0$ is fixed.

This argument above crucially uses the fact that we are in complex dimension 2.  However, since the calculations that follow do not require this restriction on dimension, we write $n$ instead of $2$.

If we let $\varphi$ solve
$$\ddt{} \varphi = \log \frac{\omega(t)^n}{\Omega}, \quad \varphi|_{t=0} =0,$$
with $\Omega = \omega_0^n e^{f/T}$ then we can write the solution of the Chern-Ricci flow $\omega(t)$ as  $\omega(t) = \hat{\omega}_t + \ddbar \varphi$ (see Section 4 of \cite{TW3}).  Fix a holomorphic section $s$ of $[E]$ vanishing to order 1 along $E$, so that on $M'=M\backslash E$ we have that $R_h=-\mn\de\db\log |s|^2_h.$ Given what we have proved, the following result can now be established using the arguments of \cite{TZ} for the K\"ahler-Ricci flow:

\begin{lemma}\label{lem}
There exists a uniform $C$ such that
\begin{enumerate}
\item[(i)] $\varphi \le C$.
\item[(ii)] Let $\tilde{\varphi} = \varphi - \ve \log|s|^2_h$.  Then $\tilde{\varphi} \ge -C$.
\item[(iii)] $\dot{\varphi} \le C$.
\end{enumerate}
\end{lemma}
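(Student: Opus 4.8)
The plan is to adapt the maximum principle arguments of Tian-Zhang \cite{TZ} to the parabolic Monge-Ampère equation $\ddt{\varphi} = \log \frac{\omega(t)^n}{\Omega}$, $\omega(t) = \hat\omega_t + \ddbar\varphi$, using the reference metrics $\hat\omega_t$ constructed above together with the key positivity $\hat\omega_t - \ve R_h \ge c_0\omega_0 > 0$ from \eqref{ohatlb}. For part (i), I would apply the maximum principle directly to $\varphi$. At a maximum in space, $\ddbar\varphi \le 0$, so $\omega(t) \le \hat\omega_t \le C\omega_0$ there; since $\Omega$ is comparable to $\omega_0^n$, we get $\ddt{\varphi} = \log\frac{\omega(t)^n}{\Omega} \le C$ at the maximum point. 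Because $T < \infty$, integrating in time gives the uniform upper bound $\varphi \le C$ on $[0,T)$.

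For part (ii), the point of the Tsuji-type modification is that $\tilde\varphi = \varphi - \ve\log|s|^2_h$ satisfies a Monge-Ampère equation whose reference form is the genuine metric $\hat\omega_t - \ve R_h$ (using $R_h = -\ddbar\log|s|^2_h$ on $M'$), so $\omega(t) = (\hat\omega_t - \ve R_h) + \ddbar\tilde\varphi$. Although $\tilde\varphi \to +\infty$ along $E$, it is bounded below and attains an interior minimum on $M'$; at such a point $\ddbar\tilde\varphi \ge 0$, hence $\omega(t)^n \ge (\hat\omega_t - \ve R_h)^n \ge c_0^n \omega_0^n \ge c\,\Omega$, so $\ddt{\varphi} = \ddt{\tilde\varphi} \ge -C$ there. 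Integrating in time over the finite interval $[0,T)$ and combining with $\tilde\varphi|_{t=0} = -\ve\log|s|^2_h \ge -C$ yields $\tilde\varphi \ge -C$. (One should also remark that $\log|s|^2_h$ is bounded above, which together with (i) shows $\tilde\varphi$ is in fact bounded above away from $E$ — this is needed to justify that the spatial minimum is attained on $M'$ rather than escaping to $E$.)

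For part (iii), the standard trick is to differentiate the flow equation in time: setting $\dot\varphi = \ddt{\varphi}$, one computes
$$\left(\ddt{} - \Delta_{\omega(t)}\right)\dot\varphi = \tr{\omega(t)}{\ddt{\hat\omega_t}} = \frac{1}{T}\tr{\omega(t)}{(\hat\omega_T - \omega_0)}.$$
Since $\hat\omega_T - \omega_0$ is not a metric, this right-hand side is not sign-definite, so one instead applies the maximum principle to a modified quantity such as $(T-t)\dot\varphi + \varphi + n t$ or $\dot\varphi - A\varphi$ for a suitable constant, exactly as in \cite{TZ}, arranging that the bad terms are absorbed and using the upper bound on $\varphi$ from (i). The computation $\left(\ddt{} - \Delta_{\omega(t)}\right)\left((T-t)\dot\varphi + \varphi\right) = -\dot\varphi + \tr{\omega(t)}{\left((T-t)\tfrac{1}{T}(\hat\omega_T - \omega_0) + \hat\omega_t\right)} - n + \dot\varphi$, and the middle trace term equals $\tr{\omega(t)}{\hat\omega_T}$, which is nonnegative since $\hat\omega_T \ge \ve R_h + c\omega_0$... actually $\hat\omega_T$ need not be positive, but $\hat\omega_T - \ve R_h$ is, so one carries the $\ve R_h$ term along as in the previous parts. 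Applying the maximum principle to this quantity gives a bound from which $\dot\varphi \le C$ follows using (i).

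The main obstacle is part (iii): one must choose the auxiliary function carefully so that the non-positivity of $\hat\omega_T - \omega_0$ (and of $\hat\omega_T$ itself) does not obstruct the maximum principle, and so that the trace terms appearing are either manifestly nonnegative or controlled using the metric $\hat\omega_t - \ve R_h \ge c_0\omega_0$ and the already-established bounds in (i) and (ii). The parts (i) and (ii) are comparatively routine once the reference metrics are in place; the only subtlety there is the non-compactness of $M'$, handled by the a priori two-sided control on $\tilde\varphi$ away from $E$.
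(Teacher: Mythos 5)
Your arguments for parts (i) and (ii) are essentially the paper's: for (i), bound the right-hand side of the flow for $\varphi$ at a spatial maximum using $\hat\omega_t \le C\omega_0$; for (ii), use that $\tilde\varphi \to +\infty$ along $E$ to obtain an interior minimum on $M'$, where $\hat\omega_t - \ve R_h \ge c_0\omega_0$ gives the lower bound. (Your aside that one needs an upper bound on $\tilde\varphi$ to locate the minimum in $M'$ is unnecessary: since $\varphi$ is smooth on the compact $M$ at each fixed time, $\tilde\varphi = \varphi - \ve\log|s|^2_h$ tends to $+\infty$ along $E$, and that alone forces the minimum into $M'$.)

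Part (iii) has a genuine gap. The quantity $(T-t)\dot\varphi + \varphi + nt$ that you compute with cannot yield a uniform upper bound on $\dot\varphi$: even if one had $(T-t)\dot\varphi + \varphi + nt \le C$, this gives $\dot\varphi \le (C - \varphi - nt)/(T-t)$, which blows up as $t \to T^-$. Moreover, as you note, the drift $\left(\ddt{} - \Delta\right)\bigl((T-t)\dot\varphi + \varphi + nt\bigr) = \tr{\omega}{\hat\omega_T}$ is not sign-controlled, since $\hat\omega_T$ need not be nonnegative, and carrying along $\ve R_h$ does not fix either problem. The quantity actually used in the paper (following \cite{TZ}) is $Q = t\dot\varphi - \varphi - nt$. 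A direct computation, using $\ddt{}\hat\omega_t = \frac{1}{T}(\hat\omega_T - \omega_0)$ and $t\cdot\frac{1}{T}(\hat\omega_T - \omega_0) - \hat\omega_t = -\omega_0$, gives
$$\left(\ddt{} - \Delta\right)Q = -\tr{\omega}{\omega_0} \le 0,$$
so $Q$ is bounded above by the maximum principle. Combined with $\varphi \le C$ from (i), this yields $t\dot\varphi \le C'$ and hence $\dot\varphi \le C$ on, say, $[T/2, T)$ (and $\dot\varphi$ is trivially bounded on $[0,T/2]$ by smoothness). The key structural point you missed is that the coefficient of $\dot\varphi$ in the test quantity must stay bounded away from zero as $t \to T^-$, which forces the choice $t$ (or a constant) rather than $T-t$.
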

\begin{proof}
For (i),  note that $\hat{\omega}_t$ is bounded from above for $t \in [0,T]$.  Then the upper bound of $\varphi$ follows from the maximum principle applied to the equation
$$\ddt{} \varphi  = \log \frac{(\hat{\omega}_t + \ddbar \varphi)^n}{\Omega},$$
as in Lemma 4.1 of \cite{TW3}.

For (ii), first note that $\tilde{\varphi} \rightarrow \infty$ along $E$ and hence for each fixed time $t$, the function $x \mapsto \tilde{\varphi}(x,t)$ attains a minimum at some point in $M'$.  Then compute at the minimum of $\tilde{\varphi}$,
\begin{eqnarray*}
\ddt{} \tilde{\varphi} & = & \log \frac{(\hat{\omega}_t - \ve R_h + \ddbar \tilde{\varphi})^n}{\Omega} \\
& \ge & \log \frac{\left( c_0 \omega_0 \right)^n}{\Omega} \ge  - C,
\end{eqnarray*}
where we have used the estimate (\ref{ohatlb}).  The lower bound of $\ti{\varphi}$ follows from the minimum principle.

Part (iii) follows from considering the evolution of $Q= t\dot{\varphi} - \varphi-nt$ as used in the K\"ahler-Ricci flow in \cite{TZ} (in the case of the Chern-Ricci flow, see the second part of Lemma 4.1 in \cite{TW3}).  Indeed,
$$\left( \ddt{} - \Delta \right) Q = - \tr{\omega}{\omega_0} \le 0,$$
so that by the maximum principle, $Q$ is uniformly bounded from above.  Using (i),  $\dot{\varphi}$ is bounded from above.
\end{proof}

One comment about notation. If $g$ and $g'$ are Hermitian metrics with corresponding $(1,1)$ forms
$\omega$ and $\omega'$, then we will write interchangeably
$$\tr{g}{g'}=\tr{\omega}{\omega'},$$
for the trace of the metric $g'$ with respect to $g$. Next:

\begin{lemma}\label{sch}
There exist uniform positive constants $C, A$ such that
$$\omega^n \ge \frac{1}{C} |s|^{2A\ve}_h \omega_0^n.$$
\end{lemma}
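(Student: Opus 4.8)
The plan is to derive the lower bound on $\omega^n$ from the estimates in Lemma \ref{lem} by rewriting $\omega^n$ in terms of $\dot\varphi$ and the reference data. Recall that $\dot\varphi = \log(\omega^n/\Omega)$, so $\omega^n = e^{\dot\varphi}\,\Omega = e^{\dot\varphi} \omega_0^n e^{f/T}$. Since $f$ is a fixed smooth function on the compact manifold $M$, the factor $e^{f/T}$ is bounded above and below by positive constants, so it suffices to bound $e^{\dot\varphi}$ from below by a constant times $|s|_h^{2A\ve}$, i.e. to bound $\dot\varphi$ from below by $A\ve \log|s|_h^2 - C$. This is exactly the kind of statement proved via the evolution of $\dot\varphi$ together with the $C^0$-type control coming from $\ti\varphi$; it is the analogue of the estimate used by Tian-Zhang in the K\"ahler-Ricci flow setting.

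First I would recall the evolution equation for $\dot\varphi$ along the Chern-Ricci flow: differentiating $\ddt\varphi = \log(\hat\omega_t + \ddbar\varphi)^n/\Omega$ in $t$ gives
$$\left(\ddt{} - \Delta\right)\dot\varphi = \tr{\omega}{\dot{\hat\omega}_t} = \frac{1}{T}\,\tr{\omega}{(\hat\omega_T - \omega_0)},$$
which is bounded but not of a definite sign. To produce a lower bound I would consider a test quantity of the form $H = \dot\varphi + \ti\varphi = \dot\varphi + \varphi - \ve\log|s|_h^2$, or more flexibly $H = (1+\eta)\dot\varphi + \ti\varphi$ for a small constant $\eta>0$ chosen so that the resulting zeroth-order terms dominate; here one uses that $(\ddt{} - \Delta)\ti\varphi = \dot\varphi - \tr{\omega}{(\hat\omega_t - \ve R_h)} + n$ on $M'$ (computed away from $E$, where $\ti\varphi\to+\infty$). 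Combining the two evolution equations, the bad term $\tr{\omega}{(\hat\omega_T - \omega_0)}$ gets absorbed into the good negative term $-\tr{\omega}{(\hat\omega_t - \ve R_h)} \le -c_0\tr{\omega}{\omega_0}$ coming from (\ref{ohatlb}), since $\hat\omega_T - \omega_0$ is controlled by a multiple of $\hat\omega_t - \ve R_h$ once $\ve$ is fixed. One then obtains an inequality of the shape $(\ddt{} - \Delta)H \ge -C$, and since $H\to+\infty$ along $E$, the minimum of $H(\cdot,t)$ over $M$ is attained on $M'$; the minimum principle gives $H \ge -C - Ct \ge -C$ on $[0,T)$. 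Feeding back the upper bounds on $\varphi$ from Lemma \ref{lem}(i) and on $\dot\varphi$ from Lemma \ref{lem}(iii) (to handle the $(1+\eta)\dot\varphi$ piece if one uses it), this rearranges to $\dot\varphi \ge \ve\log|s|_h^2 - C$, possibly after absorbing constants, which is the claim with $A$ chosen appropriately (e.g. $A = 1/(1+\eta)$ or $A=1$).

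The main obstacle I expect is making the absorption of the sign-indefinite term $\frac{1}{T}\tr{\omega}{(\hat\omega_T - \omega_0)}$ into the good term $-c_0\tr{\omega}{\omega_0}$ fully rigorous: $\hat\omega_T$ itself is not a metric (only $\hat\omega_T - \ve R_h$ is), so one must be careful that the combination being subtracted, $\hat\omega_t - \ve R_h$, genuinely bounds $|\hat\omega_T - \omega_0|_\omega$ from above — this works precisely because $\hat\omega_T - \omega_0 = \hat\omega_T - \ve R_h - (\omega_0 - \ve R_h)$ is a difference of a metric and a metric, hence $\le C(\hat\omega_t - \ve R_h)$ uniformly in $t\in[0,T]$ after shrinking $\ve$, using (\ref{ohatlb}). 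A secondary technical point is justifying the maximum/minimum principle on the non-compact manifold $M'$, which is handled exactly as in Lemma \ref{lem}(ii) by the barrier behavior of $\ti\varphi$ near $E$. Both points are standard once set up correctly, and follow the Tian-Zhang arguments cited in the text.
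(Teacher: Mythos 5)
Your high-level plan is essentially the paper's: the paper applies the maximum principle to $Q = \log(\omega_0^n/\omega^n) - A\tilde\varphi$, which up to the bounded function $f/T$ is $-(\dot\varphi + A\tilde\varphi)$, so showing $Q$ is bounded above is the same as showing your $H$-type quantity $\dot\varphi + A\tilde\varphi$ is bounded below. However, as written your argument has a gap that would make it fail.

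First, the ratio of coefficients is backwards. You propose $H = (1+\eta)\dot\varphi + \tilde\varphi$, whose normalized form has $\tilde\varphi$ with coefficient $1/(1+\eta) < 1$. The role of the $\tilde\varphi$ coefficient is to supply the positive term $A\,\tr{\omega}{(\hat\omega_t - \ve R_h)}$ (note the correct evolution is $(\ddt{}-\Delta)\tilde\varphi = \dot\varphi + \tr{\omega}{(\hat\omega_t - \ve R_h)} - n$, with a plus sign on the trace term, not the minus sign you wrote), and this positive term must dominate the sign-indefinite curvature/torsion contribution, i.e.\ one needs $(A-1)(\hat\omega_t - \ve R_h) - \Ric(\omega_0) \ge \omega_0$. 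This forces $A$ to be \emph{large}, depending on $\omega_0$, $\Ric(\omega_0)$, and the constant $c_0$ from (\ref{ohatlb}); with $A = 1/(1+\eta) < 1$ the absorption you describe of $\frac{1}{T}\tr{\omega}{(\hat\omega_T - \omega_0)}$ into $\tr{\omega}{(\hat\omega_t - \ve R_h)}$ simply fails in general, since the needed constant $C_2/T$ can exceed $1$.

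Second, the key analytic step is missing. Even after the absorption, the evolution inequality has the form $(\ddt{}-\Delta)H \ge A\dot\varphi + c\,\tr{\omega}{(\hat\omega_t - \ve R_h)} - C$ with $c > 0$, and the right-hand side contains $A\dot\varphi$ with a positive coefficient, which is not a priori bounded below — that is exactly what you are trying to prove. The crucial point, which you do not mention, is the arithmetic-geometric means inequality $\tr{\omega}{(\hat\omega_t - \ve R_h)} \ge c_0\,\tr{\omega}{\omega_0} \ge c\,(\Omega/\omega^n)^{1/n}$, which makes the positive trace term blow up exponentially as $\dot\varphi \to -\infty$ (so that $x \mapsto A\log x - cx^{1/n}$ is bounded above). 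Without this, the claimed inequality $(\ddt{}-\Delta)H \ge -C$ is false, and the appeal to the minimum principle does not close. Relatedly, feeding in the \emph{upper} bound on $\dot\varphi$ from Lemma \ref{lem}(iii) cannot help produce a \emph{lower} bound on $H$, so that step of your plan does not do what you want. Once these points are fixed — take $H = \dot\varphi + A\tilde\varphi$ with $A$ large, use the correct sign in the evolution of $\tilde\varphi$, and invoke the AM-GM mechanism — you recover precisely the paper's argument.
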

\begin{proof}
We apply the maximum principle to
$$Q = \log \frac{\omega_0^n}{\omega^n} - A\tilde{\varphi},$$
for $A$ a constant to be determined.  Note that $Q \rightarrow -\infty$ on $E$.
Compute at a point of $M \setminus E$,
\begin{eqnarray*}
\left( \ddt{} - \Delta \right) Q & = & \tr{\omega}{\Ric(\omega_0)} - A \dot{\varphi}  + A \tr{\omega}{(\omega - (\hat{\omega}_t - \ve R_h))}\\
& = & - \tr{\omega}((A-1) (\hat{\omega}_t -\ve R_h) - \textrm{Ric}(\omega_0)) - A \log\frac{\omega^n}{\Omega} \\
&& - \tr{\omega}{(\hat{\omega}_t -\ve R_h)} + An.
\end{eqnarray*}
From (\ref{ohatlb}) we may
choose $A$ sufficiently large so that for all $t \in [0,T]$,
$$(A-1) (\hat{\omega}_t - \ve R_h) -  \textrm{Ric}(\omega_0)  \ge  \omega_0.$$
Note that by the arithmetic-geometric means inequality,
$$\tr{\omega}{(\hat{\omega}_t - \ve R_h)} \ge c_0 \tr{\omega}{\omega_0} \ge c \left( \frac{\Omega}{\omega^n} \right)^{1/n},$$
for a uniform $c>0$.  Then
\begin{eqnarray*}
\left( \ddt{} - \Delta \right) Q & \le & - \tr{\omega}{\omega_0} + A \log \frac{\Omega}{\omega^n} - c \left( \frac{\Omega}{\omega^n} \right)^{1/n}  + An \\
& \le & - \tr{\omega}{\omega_0} + C,
\end{eqnarray*}
using the fact that $x \mapsto A \log x- cx^{1/n}$ is bounded from above for $x>0$.  It follows that $\tr{\omega}{\omega_0}\le C$ at the maximum of $Q$ and hence $\omega_0^n/\omega^n$ is uniformly bounded from above at this point.  But note that $-\tilde{\varphi}$ is uniformly bounded from above and hence $Q$ is uniformly bounded from above, and the result follows.
\end{proof}

In the next lemma we make use of a trick of Phong-Sturm \cite{PS2}, which we  employed in our previous paper \cite{TW3}.

\begin{lemma}\label{away}
There exist uniform positive constants $C, A$ such that
$$\mathrm{tr}_{g_0} \, g \le \frac{C}{|s|_h^{2A\ve}}.$$
\end{lemma}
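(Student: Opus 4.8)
The plan is to establish this estimate by a maximum principle argument on $M'=M\setminus E$, combining the parabolic Schwarz lemma for the Chern-Ricci flow with the bounds of Lemma~\ref{lem}. Write $\eta_t:=\hat\omega_t-\ve R_h$, so that $\eta_t\ge c_0\omega_0>0$ by \eqref{ohatlb} and $\omega(t)=\eta_t+\ddbar\ti\vp$ on $M'$ (since $\ddbar\ti\vp=\ddbar\vp+\ve R_h$ there). I would work with the quantity
$$Q=\log\tr{g_0}{g}-A\ti\vp$$
for a large constant $A$ to be chosen. Since $\ti\vp\to+\infty$ along $E$ while $\tr{g_0}{g}$ is, for each fixed $t<T$, a smooth function on all of $M$, the function $Q(\cdot,t)$ tends to $-\infty$ near $E$; hence for each $T'<T$ it attains its maximum over $M'\times[0,T']$ at some $(x_0,t_0)$ with $x_0\in M'$, and it is enough to bound $Q(x_0,t_0)$ uniformly.

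The key ingredient is the Chern-Ricci flow parabolic Schwarz lemma from \cite{TW3}: there is a constant $C_1$ depending only on $g_0$ with $\left(\ddt{}-\Delta\right)\log\tr{g_0}{g}\le C_1\tr{g}{g_0}$, the first-order torsion terms of $g_0$ being absorbed into $C_1\tr{g}{g_0}$ by means of the negative gradient term in the usual way. Using $\Delta\ti\vp=n-\tr{g}{\eta_t}$ and $\dot{\ti\vp}=\dot\vp=\log(\omega^n/\Omega)$ one obtains
$$\left(\ddt{}-\Delta\right)Q\le C_1\tr{g}{g_0}+An-A\tr{g}{\eta_t}-A\dot\vp\le-\tr{g}{g_0}+An-A\dot\vp,$$
once $A$ is chosen so that $Ac_0\ge C_1+1$ (recall $\tr{g}{\eta_t}\ge c_0\tr{g}{g_0}$). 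To control $-A\dot\vp$ at $(x_0,t_0)$ I would use the elementary inequality $\omega^n/\omega_0^n=\det(g_0^{-1}g)\ge(n/\tr{g}{g_0})^n$, which gives $\dot\vp\ge-n\log\tr{g}{g_0}-C$; then (assuming $t_0>0$, the case $t_0=0$ being immediate from the explicit form of $Q|_{t=0}$) the inequality $0\le\left(\ddt{}-\Delta\right)Q$ at $(x_0,t_0)$ forces $\tr{g}{g_0}(x_0,t_0)\le An\log\tr{g}{g_0}(x_0,t_0)+C$, hence $\tr{g}{g_0}(x_0,t_0)\le C$. Feeding this into $\tr{g_0}{g}\le\frac{1}{(n-1)!}(\tr{g}{g_0})^{n-1}(\omega^n/\omega_0^n)$, together with $\dot\vp\le C$ from Lemma~\ref{lem}(iii), gives $\tr{g_0}{g}(x_0,t_0)\le C$, and then $\ti\vp\ge-C$ from Lemma~\ref{lem}(ii) yields $Q(x_0,t_0)\le C$. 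Thus $Q\le C$ on $M'\times[0,T)$, and finally $\log\tr{g_0}{g}=Q+A\ti\vp\le C+A(\vp-\ve\log|s|^2_h)\le C-A\ve\log|s|^2_h$ by Lemma~\ref{lem}(i); exponentiating gives $\tr{g_0}{g}\le C/|s|_h^{2A\ve}$ with the same $A$, as claimed.

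One could equally bound $\dot\vp$ from below using Lemma~\ref{sch} in place of the arithmetic-geometric inequality, at the cost of some extra bookkeeping to check that the resulting $\log|s|_h^{-2}$ term does not obstruct the final conclusion. The main obstacle is really the Schwarz lemma step: because $g_0$ is only Hermitian (not K\"ahler), its torsion produces extra first-order terms in the evolution of $\tr{g_0}{g}$ that must be controlled by $C_1\tr{g}{g_0}$, and this Hermitian refinement of the Aubin--Yau calculation is exactly what is imported from \cite{TW3}; everything else is the now-standard Tsuji-type bookkeeping already prepared by Lemmas~\ref{lemmaalpha}, \ref{lem} and \ref{sch}.
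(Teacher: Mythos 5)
There is a genuine gap in the Schwarz lemma step, and it is exactly the step you flag as the ``main obstacle.'' You claim that the Hermitian parabolic Schwarz lemma of \cite{TW3} gives the clean K\"ahler-type inequality $\left(\ddt{}-\Delta\right)\log\tr{g_0}{g}\le C_1\tr{g}{g_0}$, with the torsion first-order terms ``absorbed by the negative gradient term in the usual way.'' This is not what Proposition 3.1 of \cite{TW3} provides: the negative gradient $-\frac{|\partial\tr{g_0}{g}|^2_g}{(\tr{g_0}{g})^2}$ from $\Delta\log$ is already fully consumed cancelling the positive third-order (Aubin--Yau) term, with no room to spare. What survives is the estimate (\ref{eqnfrombigcalc}), which still carries the first-order torsion contribution
$$\frac{2}{(\tr{g_0}{g})^2}\,\mathrm{Re}\!\left(g^{\ov{\ell}k}(T_0)^p_{kp}\,\partial_{\ov{\ell}}\tr{g_0}{g}\right).$$
With your bare quantity $Q=\log\tr{g_0}{g}-A\tilde\vp$, at an interior maximum this term becomes, via $\partial_i Q=0$, a multiple of $\mathrm{Re}\big(g^{\ov{\ell}k}(T_0)^p_{kp}\partial_{\ov{\ell}}\tilde\vp\big)$, and after Cauchy--Schwarz you are left with $|\partial\tilde\vp|_g$, for which there is no a priori bound (no gradient estimate on $\vp$ is available, and $\tilde\vp$ is unbounded near $E$). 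Your proof therefore does not close.

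This is precisely the role of the Phong--Sturm term $1/(\tilde\vp+C_0)$ in the paper's $Q$: its Laplacian produces a negative $-\frac{2}{(\tilde\vp+C_0)^3}|\partial\tilde\vp|^2_g$ contribution, which absorbs the $\frac{|\partial\tilde\vp|^2_g}{(\tilde\vp+C_0)^3}$ arising from Young's inequality applied to the torsion term, and the remaining $\frac{\tr{g}{g_0}}{(\tr{g_0}{g})^2}$-type term is handled by a simple dichotomy on whether $(\tr{g_0}{g})^2\le A^2(\tilde\vp+C_0)^3$. Your handling of the zeroth-order terms (using $\dot\vp\geq -n\log\tr{g}{g_0}-C$, the determinant inequality, and Lemma~\ref{lem}) is fine and agrees with the paper's endgame; the omission is only the mechanism for the torsion first-order term, but without it the maximum principle argument breaks down on any non-K\"ahler $M$.
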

\begin{proof} Choose a constant $C_0$ so that $\tilde{\varphi} +C_0 \ge 1$. We compute the evolution of
$$Q = \log \tr{g_0}{g} - A \tilde{\varphi} + \frac{1}{\tilde{\varphi}+C_0},$$
for $A$ to be determined (assume at least that $A \ve >1)$.  The idea of Phong-Sturm \cite{PS2}, used in their study of the complex Monge-Amp\`ere equation, is to make use of the quantity $1/(\tilde{\varphi}+C_0)$.
 Note that $1/(\tilde{\varphi}+C_0)$  is bounded between 0 and 1.

From Lemma \ref{lem}(i) it is sufficient to show that $Q$ is bounded from above.  Observe that $Q$ tends to negative infinity on $E$.
From Proposition 3.1 of \cite{TW3} (see also equation (4.2) of \cite{TW3}) we have
\begin{align} \label{eqnfrombigcalc}
\left( \ddt{} - \Delta \right) \log \tr{g_0}{g} & \le  \frac{2}{(\tr{g_0}{g})^2} \textrm{Re} \left( g^{\ov{\ell}k}  (T_0)^p_{kp}  \partial_{\ov{\ell}} \tr{g_0}{g} \right) + C \tr{g}{g_0},
\end{align}
assuming, without loss of generality, that we are calculating at a point with $\tr{g_0}{g} \ge 1$.
To bound the first term on the right hand side, we note that at a maximum point of $Q$ we have $\partial_i Q=0$ and hence
\begin{equation} \nonumber
\frac{1}{\tr{g_0}{g}} \partial_i \tr{g_0}{g} - A \partial_i \tilde{\varphi} - \frac{1}{(\tilde{\varphi}+C_0)^2} \partial_i \tilde{\varphi}=0.
\end{equation}
Thus at this maximum point for $Q$,
\begin{align} \nonumber \lefteqn{
\left| \frac{2}{(\tr{g_0}{g})^2} \textrm{Re} \left( g^{\ov{\ell}k}  (T_0)^p_{kp}  \partial_{\ov{\ell}} \tr{g_0}{g} \right) \right| } \\ \nonumber
& \le \left| \frac{2}{\tr{g_0}{g}} \textrm{Re} \left( \left(A+ \frac{1}{(\tilde{\varphi}+C_0)^2}\right) g^{\ov{\ell} k} (T_0)^p_{kp} (\partial_{\ov{\ell}} \tilde{\varphi} )  \right) \right|  \\ \nonumber
& \le \frac{ | \partial \tilde{\varphi}|^2_{g}}{(\tilde{\varphi} +C_0)^3} + C A^2 (\tilde{\varphi}+C_0)^3 \frac{\tr{g}{g_0}}{(\tr{g_0}{g})^2},
\end{align}
for a uniform constant $C$.
If at the maximum of $Q$ we have $(\tr{g_0}{g})^2\leq A^2(\tilde{\varphi}+C_0)^3$ then at
the same point we have
$$Q\leq \log A +\frac{3}{2}\log(\ti{\vp}+C_0)-A\ti{\vp}+\frac{1}{\tilde{\varphi}+C_0}\leq C_A,$$
for a constant $C_A$ depending on $A$, and we are done.  If on the other hand at the maximum of $Q$ we have $A^2(\tilde{\varphi}+C_0)^3 \le (\tr{g_0}{g})^2$ then
\begin{align} \nonumber
\left| \frac{2}{(\tr{g_0}{g})^2} \textrm{Re} \left( g^{\ov{\ell}k}  (T_0)^p_{kp}  \de_{\ov{\ell}} \tr{g_0}{g} \right) \right|
& \le \frac{ | \partial \tilde{\varphi}|^2_{g}}{(\tilde{\varphi} +C_0)^3}  + C \tr{g}{g_0}.
\end{align}
Now compute at the maximum of $Q$, using (\ref{eqnfrombigcalc}) and part (iii) of Lemma \ref{lem},
\begin{align} \nonumber
0 & \le \left( \ddt{} - \Delta \right) Q \\ \nonumber
& \le \frac{ | \partial \tilde{\varphi}|^2_{g}}{(\tilde{\varphi} +C_0)^3}  +C \tr{g}{g_0} - \left(A +\frac{1}{(\tilde{\varphi} +C_0)^2}\right) \dot{\varphi}  \\ \nonumber
& \ \ \  + \left( A + \frac{1}{(\tilde{\varphi}+C_0)^2} \right)\tr{\omega}{(\omega - (\hat{\omega}_t - \ve R_h))}\\ \nonumber
&\ \ \  - \frac{2}{ (\tilde{\varphi}+C_0)^3} | \partial \tilde{\varphi}|^2_g\\
& \leq C \tr{g}{g_0} +(A+1)\log \frac{\Omega}{\omega^n}+(A+1)n -A\tr{\omega}{(\hat{\omega}_t - \ve R_h)} +C.
\end{align}
But recall that we have that $\hat{\omega}_t - \ve R_h \ge c_0 \omega_0$, and so we may
choose $A$ sufficiently large so that at that point
$$\tr{g}{g_0} \le C \log \frac{\Omega}{\omega^n} +C.$$
Hence at the maximum of $Q$,
$$\tr{g_0}{g} \le \frac{1}{(n-1)!} (\tr{g}{g_0})^{n-1} \frac{\det g}{\det g_0} \le
C\frac{\omega^n}{\Omega} \left(\log \frac{\Omega}{\omega^n}\right)^{n-1}+C\leq C',$$
because we know that $\frac{\omega^n}{\Omega}\leq C$ (Lemma \ref{lem} (iii)) and $x\mapsto x|\log x|^{n-1}$
is bounded above for $x$ close to zero. From part (ii) of Lemma \ref{lem}, this implies that $Q$ is bounded from above at its maximum, hence everywhere.  This completes the proof of the lemma.
\end{proof}

Combining Lemmas \ref{sch} and \ref{away} gives uniform bounds above and below away from zero for $\omega(t)$ on compact subsets of $M'$.  To obtain $C^{\infty}_{\textrm{loc}}(M')$ estimates, we apply the local estimates of Gill \cite[Section 4]{G}. Convergence follows immediately from this, as in the proof of Theorem 1.6 of \cite{TW3}.
 This completes the proof of Theorem \ref{thm1}.

\section{Proof of Theorem \ref{thm2}} \label{sectionthm2}

Assume the hypotheses of Theorem \ref{thm2}.
As discussed above, we know that
there exist finitely many disjoint $(-1)$-curves $E_1,\dots,E_k$ such that $\int_{E_i}\alpha_T=0$, where we recall that $\alpha_t$ is given by (\ref{alphat}). As in the previous section we assume for simplicity that $k=1$ and write $E$ for the $(-1)$ curve. By assumption $(*)$, there exists a function
$f$ on $M$ and a smooth real $(1,1)$ form $\beta$ on $N$ such that
\begin{equation}\label{crux}
\alpha_T+\ddbar f = \pi^*\beta.
\end{equation}

\begin{remark} \label{remarki} It is straightforward to construct initial data $\omega_0$ on $M$ so that $(*)$ holds.  Indeed, let $\omega_M$ and $\omega_N$ be any Gauduchon metrics on $M$ and $N$ respectively, and fix $T>0$.  Then we claim that for $C>0$ sufficiently large, there exists a smooth function $\tilde{f}$ on $M$ so that
$$\omega_0 := C \pi^*\omega_N  + T \Ric (\omega_M) + \ddbar \tilde{f}$$
is Gauduchon, and the flow starting at $\omega_0$ will be non-collapsing at $T$, satisfying $(*)$ with $\beta = C \omega_N$.  Indeed, as the reader can verify, it is enough to check  that $\omega_0$ is positive definite.
If $\pi: M \rightarrow N$ is the blow up map, then the canonical bundles on $M$ and $N$ are related by $K_M = \pi^*K_N+[E]$.  It follows that we can define a smooth Hermitian metric $h$ on $[E]$ by $h   \omega_M^n= \pi^* \omega_N^n$.  Then
$$C \pi^* \omega_N + T \Ric (\omega_M) = C \pi^* \omega_N + T \pi^* \Ric(\omega_N) - T R_h,$$
and for $C>0$ sufficiently large, we have $\frac{1}{2}C  \pi^* \omega_N + T \pi^* \Ric(\omega_N) \ge 0$ and (see for example \cite{GH}, p.187) $\frac{1}{2}C \pi^*\omega_N - T R_h + \ddbar \tilde{f}>0$ for some smooth function $\tilde{f}$ on $M$.  Thus with these choices of $C$ and $\tilde{f}$, $\omega_0$ is positive definite.  \end{remark}

We first show that, after replacing $f$ by another smooth function, we may assume that $\beta$ is a Gauduchon metric on $N$.

\begin{lemma}  There exists a smooth function $f'$ and a Gauduchon metric $\omega_N$ on $N$ such that
\begin{equation}\label{crux2}
\alpha_T+\ddbar f' = \pi^*\omega_N.
\end{equation}
\end{lemma}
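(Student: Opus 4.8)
The plan is to show that the form $\beta$ satisfies the hypotheses of Buchdahl's Nakai--Moishezon criterion on $N$, so that after adding $\ddbar g$ for a suitable $g\in C^\infty(N,\mathbb{R})$ it becomes a Gauduchon metric; one then sets $f'=f+\pi^*g$. First I would record the reduction. Since $\alpha_T$ and $\ddbar f$ are $\partial\ov{\partial}$-closed, so is $\pi^*\beta=\alpha_T+\ddbar f$, and since $\pi$ is surjective and holomorphic, $\pi^*$ is injective on forms, so $\beta$ is $\partial\ov{\partial}$-closed on $N$. Hence if $g\in C^\infty(N,\mathbb{R})$ satisfies $\beta+\ddbar g>0$, then $\omega_N:=\beta+\ddbar g$ is a Gauduchon metric on $N$ and $f':=f+\pi^*g$ satisfies
$$\alpha_T+\ddbar f'=\pi^*\beta+\pi^*(\ddbar g)=\pi^*\omega_N,$$
as required. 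So everything reduces to producing such a $g$.

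To this end I would verify the three conditions needed for Buchdahl's criterion \cite{Bu2} (together with \cite{Bu1,La} when $N$, equivalently $M$, is K\"ahler): (i) $\int_N\beta^2>0$; (ii) $\int_N\beta\wedge\omega'>0$ for every Gauduchon metric $\omega'$ on $N$; (iii) $\int_C\beta>0$ for every irreducible curve $C\subset N$. For (i), pull back to $M$ and use Stokes' theorem together with $\partial\ov{\partial}\alpha_T=0$ to discard the cross terms, obtaining $\int_N\beta^2=\int_M(\alpha_T+\ddbar f)^2=\int_M\alpha_T^2>0$, the last inequality being the non-collapsing condition. For (ii), since $\pi^*\omega'$ is $\partial\ov{\partial}$-closed the term involving $\ddbar f$ again drops, so, exactly as in the proof of Lemma \ref{lemmaalpha},
$$\int_N\beta\wedge\omega'=\int_M\alpha_T\wedge\pi^*\omega'=\lim_{t\to T^-}\int_M\omega(t)\wedge\pi^*\omega'\geq 0,$$
and equality cannot occur, since Buchdahl's Hodge Index Theorem \cite[Lemma 4]{Bu1} would then force $\int_N(\omega')^2\leq 0$, contradicting $\omega'>0$. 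For (iii), let $C\subset N$ be an irreducible curve and let $\ti{C}\subset M$ be its strict transform; then $\ti{C}$ is an irreducible curve with $\ti{C}\neq E$ (as $\pi$ contracts $E$ to a point), $\pi$ restricts to a degree-one morphism $\ti{C}\to C$, and since $\ddbar f$ integrates to zero over the closed curve $\ti{C}$ we get $\int_C\beta=\int_{\ti{C}}\pi^*\beta=\int_{\ti{C}}\alpha_T>0$, the last inequality being the non-collapsing statement $\int_{C'}\alpha_T>0$ for every irreducible curve $C'\neq E$. With (i)--(iii) established, Buchdahl's Nakai--Moishezon criterion yields the desired $g$, and hence $\omega_N$ and $f'$.

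The only genuinely non-formal point is condition (ii): the direct argument gives only $\int_N\beta\wedge\omega'\geq 0$, and the strict inequality has to be extracted from the Hodge index theorem, precisely as in Lemma \ref{lemmaalpha}. The remaining work is Stokes-theorem bookkeeping on a complex surface (where top $\db$-terms vanish and $\ddbar$ of a function is $d$-exact and $d$-closed), provided one keeps careful track of strict transforms and of the exceptional curve $E$ under the blow-down $\pi$; in particular one should note in (iii) that the exceptional contributions all vanish, since $\pi(E)$ is a point and $\ddbar f$ integrates to zero over closed curves.
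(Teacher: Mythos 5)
Your proposal is correct and follows essentially the same route as the paper: verify the hypotheses of Buchdahl's Nakai--Moishezon criterion for $\beta$ on $N$ (using $\partial\ov{\partial}$-closedness, $\int_N\beta^2>0$, positivity against curves, and positivity against a Gauduchon metric), then pull back the resulting $\ddbar$-potential to produce $f'$. The one place you deviate is in establishing strict positivity of $\int_N\beta\wedge\omega'$: you invoke Buchdahl's Hodge Index Theorem to upgrade $\geq 0$ to $>0$ for every Gauduchon $\omega'$, whereas the paper sidesteps this by observing that for small $\delta>0$ the form $\gamma+\delta\beta$ is still Gauduchon and $\int_N\beta\wedge(\gamma+\delta\beta)\geq\delta\int_N\beta^2>0$, producing a single Gauduchon metric pairing strictly positively with $\beta$ (which suffices for the criterion). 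Both devices appear elsewhere in the paper (the Hodge index argument in the proof of Lemma~\ref{lemmaalpha}, the $\delta\beta$ trick in its K\"ahler subcase), so this is a cosmetic rather than substantive difference. One small sign remark: with your conventions you get $f'=f+\pi^*g$; the paper's printed $f'=f-\pi^*h$ appears to be a typo.
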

\begin{proof}
From (\ref{crux}) it immediately follows that $\de\db\beta=0$. Furthermore, we obviously have $\int_N \beta^2>0$ and $\int_C \beta=\int_{\pi^* C}\alpha_T>0$ for all curves $C$ in $N$.
Let $\gamma$ be any Gauduchon metric on $N$. Then
$$\int_N \beta\wedge\gamma=\int_M \pi^*\beta\wedge\pi^*\gamma=\int_M \alpha_T\wedge\pi^*\gamma=
\lim_{t\to T^-}\int_M\omega(t)\wedge\pi^*\gamma\geq 0.$$
For $\delta>0$ sufficiently small $\gamma+\delta\beta$ is positive definite and
$$\int_N \beta\wedge(\gamma+\delta\beta)\geq \delta\int_N\beta^2>0.$$
Therefore Buchdahl's Nakai-Moishezon criterion \cite{Bu2} gives us a function $h$ on $N$ such that $\beta+\ddbar h>0$. Defining $f'=f-\pi^*h$ and $\omega_N = \beta + \ddbar h$ we then obtain (\ref{crux2}).
\end{proof}

As a consequence of this, we have that $\pi^*\omega_N-\omega_0$ is a $d$-closed form, so that $d\omega_0=\pi^*(d\omega_N)$.
This implies that the torsion tensors of $\omega_0$ and $\pi^*\omega_N$ are related by
\begin{equation}\label{useful}
(T_0)^p_{j\ell}(g_0)_{p\ov{k}}=(\de\omega_0)_{j\ov{k}\ell}=(\pi^*\de\omega_N)_{j\ov{k}\ell}=(\pi^*T_N)^p_{j\ell}(\pi^*g_N)_{p\ov{k}}.
\end{equation}
This equality is crucial in the arguments that follow.
We may choose a
 smooth metric $h$ on the fibers of $[E]$ with curvature $R_h$, and $\ve_0>0$ small such that
\begin{equation} \label{oNR}
\pi^*\omega_N-\ve_0 R_h >0,
\end{equation}
Indeed this follows again from the argument in for example p. 187 of \cite{GH}.

As in the previous section, we write $s$ for a defining section of $E$, and
since the calculations that follow do not require the dimension to be $2$, we write $n$ instead of $2$.
 From Lemma 2.4 of \cite{SW1} we  have that
\begin{equation} \label{simpleSW}
\omega_0\leq \frac{C}{|s|^2_h}\pi^*\omega_N.
\end{equation}
Furthermore if we define $\hat{\omega}_t=\frac{1}{T}((T-t)\omega_0+t\pi^*\omega_N)$ then from (\ref{oNR}), we have
\begin{equation} \label{lbhot}
\hat{\omega}_t-\ve_0 R_h\geq c_0\omega_0, \quad  \textrm{for all} \quad 0\leq t\leq T,
\end{equation}
for a uniform $c_0>0$.
Moreover $\omega(t)=\hat{\omega}_t+\ddbar\vp(t)$ for $t<T$ where $\vp$ solves
$$\frac{\de}{\de t}\vp=\log\frac{\omega(t)^n}{\Omega},\quad \varphi|_{t=0} =0,$$
with $\Omega=\omega_0^n e^{f/T}$. Since $\hat{\omega}_T = \pi^* \omega_N \ge 0$, we can apply Proposition 5.1 of \cite{TW3} to obtain:

\begin{lemma} \label{lemmaphiagain}
There exists a uniform constant $C$ such that
\begin{enumerate}
\item[(i)] $\displaystyle{| \varphi |\le C}$.
\item[(ii)] $\displaystyle{\dot{\varphi} \le C.}$
\end{enumerate}
\end{lemma}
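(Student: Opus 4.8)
The plan is to derive both bounds from the general \emph{a priori} estimate of Proposition 5.1 of \cite{TW3}, so the main task is to verify that the current situation meets its hypotheses, and then to recall why each half holds. Recall that $\omega(t)=\hat{\omega}_t+\ddbar\varphi$, with $\hat{\omega}_t=\frac{1}{T}((T-t)\omega_0+t\,\pi^*\omega_N)$ interpolating linearly between $\omega_0$ and the limiting reference form $\hat{\omega}_T=\pi^*\omega_N$, and $\dot{\varphi}=\log(\omega^n/\Omega)$ with $\Omega=\omega_0^n e^{f/T}$. The one new input is that $\hat{\omega}_T$ is semipositive with positive total mass: since $\omega_N>0$ on $N$ and $\pi$ is holomorphic, $\hat{\omega}_T=\pi^*\omega_N\ge 0$ and is smooth on $M$, while the non-collapsing hypothesis gives $\int_M\hat{\omega}_T^n=\int_M(\pi^*\omega_N)^n=\int_N\omega_N^n=\int_M\alpha_T^n>0$. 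Granting these, conclusions (i) and (ii) are exactly what Proposition 5.1 of \cite{TW3} provides.

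For the upper bound $\varphi\le C$ and for (ii) the mechanism is the maximum principle, just as in Lemma \ref{lem}(i),(iii). The forms $\hat{\omega}_t$ are uniformly bounded above on $[0,T]$ and $\Omega$ is uniformly comparable to $\omega_0^n$, so at a spatial maximum of $\varphi$, where $\ddbar\varphi\le 0$, one gets $\omega(t)^n\le\hat{\omega}_t^n\le C\,\Omega$ and hence $\dot{\varphi}=\log(\omega^n/\Omega)\le C$ there; since $t\mapsto\sup_M\varphi(\cdot,t)$ therefore grows at rate at most $C$ and vanishes at $t=0$, we obtain $\varphi\le CT$. For $\dot{\varphi}\le C$ I would run the maximum principle on $Q=t\dot{\varphi}-\varphi-nt$: using only that $\hat{\omega}_t$ is linear in $t$, one computes $(\ddt{}-\Delta)Q=-\tr{\omega}{\omega_0}\le 0$ (the calculation behind the second part of Lemma 4.1 of \cite{TW3}), so $Q$ is bounded above, and combining with $\varphi\le CT$ yields (ii).

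The delicate point, which I expect to be the main obstacle, is the lower bound $\varphi\ge -C$, and this is where it matters that $\hat{\omega}_T=\pi^*\omega_N$ is only semipositive, degenerating precisely along $E$. The naive spatial-minimum argument fails, because $\hat{\omega}_t^n=T^{-n}((T-t)\omega_0+t\,\pi^*\omega_N)^n$ is not bounded below by any fixed multiple of $\Omega$: $\pi^*\omega_N$ degenerates along $E$ while the coefficient of $\omega_0$ tends to $0$ as $t\to T$. One must argue globally instead. The ingredients are: the volume identity $\int_M\omega(t)^n=\int_M\hat{\omega}_t^n$ (valid because $\omega_0$, $\pi^*\omega_N$, hence $\hat{\omega}_t$, are Gauduchon, so $\ddbar\varphi$ contributes nothing), which with the previous paragraph shows $\int_M\omega(t)^n\to\int_N\omega_N^n>0$ and so stays uniformly bounded below by a positive constant; the bound $\dot{\varphi}\le C$, which says the Monge-Amp\`ere density $\omega(t)^n/\Omega=e^{\dot{\varphi}}$ is uniformly bounded, hence uniformly in $L^p(\Omega)$ for every $p$; and the torsion identity \eqref{useful}, which lets the Hermitian geometry behave, in the integral estimates to follow, exactly as in the K\"ahler case. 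Feeding these into the parabolic argument of \cite{SW1}, transplanted to the Hermitian setting --- equivalently, into the $L^\infty$ estimate for the associated family of degenerate complex Monge-Amp\`ere equations with nef and big reference class, which in the Hermitian case rests on the $C^0$ estimate of \cite{TW2} --- yields the uniform lower bound on $\varphi$. This is precisely the content of Proposition 5.1 of \cite{TW3} that we invoke.
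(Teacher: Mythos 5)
Your core approach coincides with the paper's: the entire proof is to observe that $\hat{\omega}_T=\pi^*\omega_N$ is a smooth nonnegative $(1,1)$-form with $\int_M\hat{\omega}_T^n>0$ and to cite Proposition~5.1 of \cite{TW3}, and your verification of those hypotheses (semipositivity of $\pi^*\omega_N$, positivity of the total volume via the Gauduchon property) is correct.

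However, your gloss on \emph{why} Proposition~5.1 gives the lower bound $\varphi\ge -C$ contains a misstatement worth flagging. You claim that ``the naive spatial-minimum argument fails'' because $\hat{\omega}_t^n$ is not bounded below by a fixed multiple of $\Omega$, and you reach instead for a Ko\l odziej-type $L^\infty$ estimate. In fact the spatial-minimum argument \emph{does} work once one integrates in time. Since $\hat{\omega}_T\ge 0$, one has $\hat{\omega}_t\ge \frac{T-t}{T}\omega_0$ and $\hat{\omega}_t\ge 0$ for all $t\in[0,T]$. At a spatial minimum $x_t$ of $\varphi(\cdot,t)$ one has $\ddbar\varphi\ge 0$, hence $\omega^n\ge \hat{\omega}_t^n\ge \left(\tfrac{T-t}{T}\right)^n\omega_0^n$ there, so
$$\frac{d}{dt}\min_M\varphi(\cdot,t)\ \ge\ \dot{\varphi}(x_t,t)\ \ge\ n\log\frac{T-t}{T}-C_0.$$
Since $\int_0^T\log\frac{T-s}{T}\,ds=-T$ is finite, integrating from $0$ to $t<T$ gives $\varphi\ge -nT-C_0T$. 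No pluripotential-theoretic $L^\infty$ estimate (nor the torsion identity \eqref{useful}, which is relevant only to the second-order trace estimates) enters; what saves the argument is precisely the linear rate $(T-t)$ of degeneration, whose logarithm has a finite time integral. The Ko\l odziej route is needed in the genuinely nef-but-not-smoothly-semipositive situation of \cite{TZ}; here the smoothness of $\hat{\omega}_T\ge 0$ makes the elementary minimum principle suffice, and that is what Proposition~5.1 of \cite{TW3} actually exploits.
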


Note that we already have (ii) from Lemma \ref{lem}.  The point of Lemma \ref{lemmaphiagain} is that the condition $\hat{\omega}_T\ge 0$ gives us a lower bound for $\varphi$.
The following two lemmas give analogs of the estimates of  Lemma 2.5 (i) of \cite{SW1}.  There are significant additional technical difficulties arising from the torsion terms, and we need to employ again the Phong-Sturm trick \cite{PS2}.   First:

\begin{lemma}\label{bds}
There exists a uniform constant $C>0$ such that
$$\omega\leq\frac{C}{|s|^2_h}\pi^*\omega_N.$$
\end{lemma}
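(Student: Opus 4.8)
The plan is to establish a Chern--Laplacian parabolic inequality for a suitable barrier quantity and apply the maximum principle, exactly in the spirit of Lemma \ref{away} from the previous section, but now using the reference metric $\pi^*\omega_N$ (which degenerates along $E$) in place of $\omega_0$. Set $u = \log |s|^2_h$, so that on $M' = M \setminus E$ we have $-\mn\de\db u = R_h$ (with $s$ the defining section of $E$ fixed above). Following Phong--Sturm \cite{PS2} and the argument of Lemma \ref{away}, I would consider a quantity of the form
$$Q = \log \left( \tr{\pi^*\omega_N}{\omega} \right) - A \ti{\vp} + \frac{1}{\ti{\vp} + C_0},$$
where $\ti{\vp} = \vp - \ve_0 u$ (which satisfies $\ti{\vp} \to +\infty$ on $E$ by an argument as in Lemma \ref{lem}(ii), using \eqref{lbhot}), $A$ is a large constant to be chosen with $A\ve_0 > 1$, and $C_0$ is chosen so that $\ti{\vp} + C_0 \ge 1$. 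The crucial structural point, which is the reason the lemma is expected to hold, is the torsion identity \eqref{useful}: it says the torsion of $\omega_0$ agrees with the pullback of the torsion of $\omega_N$, so that when we differentiate $\tr{\pi^*\omega_N}{\omega}$ the bad first-order torsion terms are governed by $\pi^*T_N$, which is compatible with the degenerate reference metric $\pi^*\omega_N$.

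The key steps, in order, would be: (1) derive the evolution inequality for $\log \tr{\pi^*\omega_N}{\omega}$ along the Chern--Ricci flow, of the Aubin--Yau--Cherrier type used in \cite{TW3} — the one subtlety compared to \eqref{eqnfrombigcalc} is that $\pi^*\omega_N$ is only a degenerate (nonnegative) form and not a metric, so one must be careful that the constants in the bound depend only on fixed geometric data on $N$ pulled back, which is where \eqref{useful} is used to control the torsion contribution; (2) as in Lemma \ref{away}, at the maximum of $Q$ use $\de_i Q = 0$ to trade the dangerous Re-term $\tfrac{2}{(\tr{\pi^*\omega_N}{\omega})^2}\mathrm{Re}(\cdots \de_{\bar\ell}\tr{\pi^*\omega_N}{\omega})$ against $|\de\ti\vp|^2_g/(\ti\vp+C_0)^3$ plus a controllable term, splitting into the two cases according to whether $(\tr{\pi^*\omega_N}{\omega})^2$ is bounded by $A^2(\ti\vp+C_0)^3$ or not; (3) in the main case, use the Phong--Sturm term $1/(\ti\vp+C_0)$, whose Laplacian produces $-\tfrac{2}{(\ti\vp+C_0)^3}|\de\ti\vp|^2_g$, to absorb the gradient term, arriving at an inequality of the form $0 \le C\,\tr{\omega}{(\pi^*\omega_N)} + (A+1)\log\tfrac{\Omega}{\omega^n} + (A+1)n - A\,\tr{\omega}{(\hat\omega_t - \ve_0 R_h)} + C$, then invoke \eqref{lbhot} to choose $A$ large so the $-A\tr{\omega}{(\hat\omega_t-\ve_0 R_h)}$ term dominates, forcing $\tr{\omega}{\omega_0} \le C\log\tfrac{\Omega}{\omega^n} + C$; (4) convert this to a bound on $\tr{\pi^*\omega_N}{\omega}$ via $\tr{\pi^*\omega_N}{\omega} \le \tfrac{1}{(n-1)!}(\tr{\omega}{(\pi^*\omega_N)})^{n-1}\tfrac{\omega^n}{\pi^*\omega_N^n}$ together with the lower volume bound and the fact that $x \mapsto x|\log x|^{n-1}$ is bounded near zero, using \eqref{simpleSW} to relate $\pi^*\omega_N^n$ and $\omega_0^n$; and (5) use Lemma \ref{lemmaphiagain}(i) and the lower bound on $\ti\vp$ to conclude $Q$ is bounded above at its maximum, hence everywhere, which rewritten gives $\tr{\pi^*\omega_N}{\omega} \le C|s|^{-2A\ve_0}_h$. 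To get the cleaner statement with exponent exactly $1$ as in the lemma, I would then either optimize $\ve_0$ or, more simply, combine with \eqref{simpleSW} and rerun the argument (or note the claim as stated may only need exponent $2$ matching \eqref{simpleSW}; in any case the displayed inequality $\omega \le \tfrac{C}{|s|^2_h}\pi^*\omega_N$ should follow after rescaling $\ve_0$ and tracking constants — here one uses that $|s|^2_h$ is bounded, so a bound with any fixed positive power of $|s|^{-2}_h$ upgrades to the stated one at the cost of enlarging $C$ only if the exponent is $\le 1$; I would arrange $A\ve_0 \le 1$ from the start by first shrinking $\ve_0$).

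The main obstacle I anticipate is step (1): carefully deriving the evolution of $\log\tr{\pi^*\omega_N}{\omega}$ when the reference form $\pi^*\omega_N$ is degenerate and, crucially, keeping all error constants uniform — this is precisely where the torsion identity \eqref{useful} must be exploited so that the first-order terms involving the torsion of $\omega_0$ are packaged as pullbacks from $N$ and hence bounded by $\pi^*\omega_N$-compatible quantities rather than by $\omega_0$ (which does not degenerate). Everything downstream is a fairly mechanical, if lengthy, adaptation of the maximum-principle juggling already carried out in Lemma \ref{away} and in \cite{SW1, PS2, TW3}; the conceptual content is entirely in setting up the right barrier $Q$ and in the torsion bookkeeping.
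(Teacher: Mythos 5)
Your overall strategy is the right one and matches the paper's: use the Phong--Sturm barrier as in Lemma \ref{away}, replace $\omega_0$ by the degenerate reference $\hat\omega=\pi^*\omega_N$, and exploit the torsion identity \eqref{useful} so that the first-order torsion terms in the evolution inequality for $\log\tr{\hat g}{g}$ are packaged as $\hat T = \pi^*T_N$ and hence controlled by $\pi^*\omega_N$-compatible quantities. Steps (2)--(5) of your outline are a faithful adaptation of the mechanism already carried out in Lemma \ref{away}.

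However, there is a genuine gap in how you propose to land the exponent at exactly $2$, and in fact your plan is internally inconsistent. Near the start you correctly impose $A\ve_0 > 1$: this is necessary, because your barrier must tend to $-\infty$ on $E$ for the maximum principle to apply, and since by \eqref{simpleSW} one only knows $\log\tr{\hat g}{g}\le -\log|s|^2_h + \log\tr{g_0}{g} + C$ while $\tr{g_0}{g}$ has no a priori bound near $E$, the term $-A\ti\vp \approx A\ve_0\log|s|^2_h$ must strictly dominate $-\log|s|^2_h$, forcing $A\ve_0 > 1$. But at the end you propose to ``arrange $A\ve_0\le 1$ from the start by first shrinking $\ve_0$,'' which contradicts the requirement just stated; moreover shrinking $\ve_0$ forces $A$ to grow (since the positivity constant $c_0$ in \eqref{lbhot} degenerates with $\ve_0$, and one needs the negative term $-A\tr\omega(\hat\omega_t-\ve_0 R_h)\le -Ac_0\tr{g}{g_0}$ to dominate), so $A\ve_0$ stays bounded away from $0$ and cannot simply be pushed below $1$.

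The paper resolves this by decoupling the two roles of the parameter. It fixes $\ve_0$ (and hence $c_0$) once and for all via \eqref{oNR} and \eqref{lbhot}, and then takes the coefficient of $\log|s|^2_h$ in $\ti\vp$ to be $\frac{1+\delta}{A}$ for a small $\delta>0$, i.e., it considers
$$Q_\delta = \log\tr{\hat g}{g}+\log|s|^{2(1+\delta)}_h - A\vp + \frac{1}{\ti\vp+C_0},\qquad \ti\vp=\vp-\tfrac{1+\delta}{A}\log|s|^2_h,$$
which is exactly your $Q$ with $\ve_0$ replaced by $\frac{1+\delta}{A}$. Now $A\cdot\frac{1+\delta}{A}=1+\delta>1$, so $Q_\delta\to-\infty$ on $E$, while the positivity needed in the max-principle computation comes from the fixed \eqref{lbhot}: for $A$ large one writes $A\hat\omega_t-(1+\delta)R_h = \frac{1+\delta}{\ve_0}(\hat\omega_t-\ve_0 R_h)+(A-\frac{1+\delta}{\ve_0})\hat\omega_t$, uses the first summand to give a uniform multiple of $\omega_0$ and the second (together with $\hat\omega_t\ge c\,\pi^*\omega_N$) to absorb the $C\tr{g}{\hat g}$ term. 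The resulting upper bound on $Q_\delta$ is uniform in $\delta$, and letting $\delta\to 0$ yields precisely the exponent $2$ in the lemma. This $\delta$-device, rather than shrinking $\ve_0$, is the missing step in your sketch.
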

\begin{proof}
For notational simplicity denote by $\hat{\omega}=\pi^*\omega_N$, which is a metric on $M'=M \setminus E$. First, we apply the calculation of  Proposition 3.1 of \cite{TW3} to obtain, on $M'$,
\[\begin{split}
\left( \ddt{} - \Delta \right) \log \tr{\hat{g}}{g} &\le \frac{2}{(\tr{\hat{g}}{g})^2} \textrm{Re} \left(
\hat{g}^{\ov{\ell}i}g^{\ov{q}k} (T_0)^p_{ki}(g_0)_{p\ov{\ell}} \de_{\ov{q}} \mathrm{tr}_{\hat{g}}g\right) +C\tr{g}{\hat{g}}\\
&- \frac{1}{\tr{\hat{g}}g} \bigg[g^{\ov{j}i}\hat{g}^{\ov{\ell}k}\left(\hat{\nabla}_i  \left(  \ov{(T_0)^p_{j\ell}} (g_0)_{k \ov{p}} \right) +
 \hat{\nabla}_{\ov{\ell}} \left( (T_0)^p_{ik} (g_0)_{p \ov{j}} \right) \right)\\
&-g^{\ov{j}i}\hat{g}^{\ov{\ell}k}\ov{\hat{T}^q_{j\ell}} (T_0)^p_{ik} (g_0)_{p \ov{q}}\bigg],
\end{split}\]
where $C$ depends only on the curvature and torsion of $\omega_N$, and where $\hat{\nabla}$ denotes the pullback of the Chern connection of $\omega_N$.
We now use the identity \eqref{useful} four times in the last equation to get
\begin{align} \nonumber
\left( \ddt{} - \Delta \right) \log \tr{\hat{g}}{g} &\le \frac{2}{(\tr{\hat{g}}{g})^2} \textrm{Re} \left(
g^{\ov{q}k} \hat{T}^i_{ki} \de_{\ov{q}} \mathrm{tr}_{\hat{g}}g\right) +C\tr{g}{\hat{g}} \\ \nonumber
&- \frac{1}{\tr{\hat{g}}g} \bigg[g^{\ov{j}i}\hat{\nabla}_i  \ov{\hat{T}^\ell_{j\ell}} +
 g^{\ov{j}i} \hat{g}^{\ov{\ell}k}\hat{g}_{p \ov{j}} \hat{\nabla}_{\ov{\ell}} \hat{T}^p_{ik}-g^{\ov{j}i}\hat{g}^{\ov{\ell}k} \hat{g}_{p \ov{q}}\ov{\hat{T}^q_{j\ell}} \hat{T}^p_{ik}\bigg]\\ \label{log2}
&\leq \frac{2}{(\tr{\hat{g}}{g})^2} \textrm{Re} \left(
g^{\ov{q}k} \hat{T}^i_{ki} \de_{\ov{q}} \mathrm{tr}_{\hat{g}}g\right) +C\tr{g}{\hat{g}}+\frac{C\tr{g}{\hat{g}}}{\tr{\hat{g}}g}.
\end{align}
Take now $\delta>0$ and consider
$$Q_\delta= \log \tr{\hat{g}}{g} +\log|s|^{2(1+\delta)}_h -A\vp+\frac{1}{\ti{\vp}+C_0},$$
where $\ti{\vp}=\vp-\frac{1+\delta}{A}\log|s|^2_h$ and $\ti{\vp}+C_0\geq 1$,
so $Q_\delta$ goes to negative infinity as $x$ tends to $E$. Our goal is to prove that $Q_\delta\leq C$ independent of
$\delta$, since we can then let $\delta$ go to zero and we are done. It is obvious that at the maximum of $Q$ we can assume that $\tr{\hat{g}}g\geq 1$.

At this point the proof proceeds exactly in the same way as in Lemma \ref{away}.
Indeed, at a maximum point of $Q_\delta$ we have
\begin{equation} \nonumber
\frac{1}{\tr{\hat{g}}{g}} \partial_i \tr{\hat{g}}{g} - A \partial_i \tilde{\varphi} - \frac{1}{(\tilde{\varphi}+C_0)^2} \partial_i \tilde{\varphi}=0,
\end{equation}
for all $i$. Thus at this point,
\begin{align} \nonumber \lefteqn{
\left| \frac{2}{(\tr{\hat{g}}{g})^2} \textrm{Re} \left(
g^{\ov{q}k} \hat{T}^i_{ki} \de_{\ov{q}} \mathrm{tr}_{\hat{g}}g\right)  \right| } \\ \nonumber
& \le \left| \frac{2}{\tr{\hat{g}}{g}} \textrm{Re} \left( \left(A+ \frac{1}{(\tilde{\varphi}+C_0)^2}\right) g^{\ov{q} k} \hat{T}^i_{ki} (\partial_{\ov{q}} \tilde{\varphi} )  \right) \right|  \\ \nonumber
& \le \frac{ | \partial \tilde{\varphi}|^2_{g}}{(\tilde{\varphi} +C_0)^3} + C A^2 (\tilde{\varphi}+C_0)^3 \frac{\tr{g}{\hat{g}}}{(\tr{\hat{g}}{g})^2},
\end{align}
for a uniform constant $C$.
If at the maximum of $Q_\delta$ we have $(\tr{\hat{g}}{g})^2\leq A^2(\tilde{\varphi}+C_0)^3$ then at
the same point we have
$$Q_\delta\leq \log A +\frac{3}{2}\log(\ti{\vp}+C_0)-A\ti{\vp}+\frac{1}{\tilde{\varphi}+C_0}\leq C_A,$$
for a constant $C_A$ depending on $A$, and we are done.  If on the other hand at the maximum of $Q_\delta$ we have $A^2(\tilde{\varphi}+C_0)^3 \le (\tr{\hat{g}}{g})^2$ then
\begin{align} \nonumber
\left| \frac{2}{(\tr{\hat{g}}{g})^2} \textrm{Re} \left(
g^{\ov{q}k} \hat{T}^i_{ki} \de_{\ov{q}} \mathrm{tr}_{\hat{g}}g\right) \right|
& \le \frac{ | \partial \tilde{\varphi}|^2_{g}}{(\tilde{\varphi} +C_0)^3}  + C \tr{g}{\hat{g}}.
\end{align}
Now compute at the maximum of $Q_\delta$, using \eqref{log2} and part (ii) of Lemma \ref{lemmaphiagain},
\begin{align} \nonumber
0 & \le \left( \ddt{} - \Delta \right) Q_\delta \\ \nonumber
& \le \frac{ | \partial \tilde{\varphi}|^2_{g}}{(\tilde{\varphi} +C_0)^3}  +C \tr{g}{\hat{g}} - \left(A +\frac{1}{(\tilde{\varphi} +C_0)^2}\right) \dot{\varphi}  \\ \nonumber
& \ \ \  + \left( A + \frac{1}{(\tilde{\varphi}+C_0)^2} \right)\tr{\omega}{\left(\omega - \hat{\omega}_t + \frac{(1+\delta)R_h}{A} \right)} - \frac{2}{ (\tilde{\varphi}+C_0)^3} | \partial \tilde{\varphi}|^2_g\\ \nonumber
& \leq C \tr{g}{\hat{g}} +(A+1)\log \frac{\Omega}{\omega^n}+(A+1)n -A\tr{\omega}{\left(\hat{\omega}_t - \frac{(1+\delta)R_h}{A} \right)} +C
\end{align}
For $A$ sufficiently large, we have from (\ref{lbhot}),
$$\hat{\omega}_t - \frac{(1+\delta) R_h}{A} \ge c_0 \omega_0.$$
Hence, recalling that $\tr{g}{\hat{g}}\leq C\tr{g}{g_0},$ we see that we may choose $A$ sufficiently large (and independent of $\delta$) so that at that point
$$\tr{g}{g_0} \le C \log \frac{\Omega}{\omega^n} +C.$$
Hence at the maximum of $Q_\delta$,
$$\tr{g_0}{g} \le \frac{1}{(n-1)!} (\tr{g}{g_0})^{n-1} \frac{\det g}{\det g_0} \le
C\frac{\omega^n}{\Omega} \left(\log \frac{\Omega}{\omega^n}\right)^{n-1}+C\leq C',$$
because we know that $\frac{\omega^n}{\Omega}\leq C$ (Lemma \ref{lemmaphiagain} (ii)) and $x\mapsto x|\log x|^{n-1}$
is bounded above for $x$ close to zero.
On the other hand from \eqref{simpleSW} and part (i) of Lemma \ref{lemmaphiagain}, this implies that $Q_\delta$ is bounded from above at its maximum, hence everywhere, uniformly in $\delta$.
Using Lemma \ref{lemmaphiagain} (i) again completes the proof of the lemma.
\end{proof}

We will make use of Lemma \ref{bds} to prove the following:

\begin{lemma} \label{lemmaradial}
There exists a uniform $\eta>0$ and $C>0$ such that
\begin{equation} \label{radialestimate}
\omega \le \frac{C}{|s|^{2(1-\eta)}_h} \omega_0.
\end{equation}
\end{lemma}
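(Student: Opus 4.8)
The plan is to run a maximum principle argument for the quantity $\log \tr{g_0}{g}$ (or equivalently $\log\tr{\hat g}{g}$), but now inserting the term $\log|s|^{2(1-\eta)}_h$ rather than $\log|s|^{2(1+\delta)}_h$. The key point is that the estimate from Lemma \ref{bds}, namely $\tr{\hat g}{g}\le C|s|^{-2}_h\pi^*\omega_N$, together with the reverse bound $\omega_0\ge c_0(\pi^*\omega_N - \ve_0 R_h)$ (from \eqref{oNR}) gives us a two-sided control on $\omega$ relative to $\pi^*\omega_N$ away from $E$, with only the single power $|s|^{-2}_h$ appearing. Gaining a small exponent $\eta>0$ will come from the fact that we can now afford to lose a definite (but small) amount in the curvature term $R_h$.

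First I would set $\tilde\varphi = \varphi - \frac{1-\eta}{A}\log|s|^2_h$ for a small $\eta>0$ and large $A$ to be chosen, and consider
$$Q = \log \tr{\hat g}{g} + (1-\eta)\log|s|^2_h - A\varphi + \frac{1}{\tilde\varphi + C_0},$$
where $C_0$ is chosen so that $\tilde\varphi + C_0\ge 1$; note $Q\to -\infty$ near $E$ so its maximum is attained on $M'$. Using the evolution inequality \eqref{log2} for $\log\tr{\hat g}{g}$ and the Phong-Sturm trick exactly as in Lemmas \ref{away} and \ref{bds}, the gradient term $\frac{2}{(\tr{\hat g}{g})^2}\mathrm{Re}(g^{\ov qk}\hat T^i_{ki}\de_{\ov q}\tr{\hat g}{g})$ is absorbed at the maximum of $Q$ into $\frac{|\de\tilde\varphi|^2_g}{(\tilde\varphi+C_0)^3} + C\tr{g}{\hat g}$, after splitting into the two cases $(\tr{\hat g}{g})^2 \le A^2(\tilde\varphi+C_0)^3$ (where $Q$ is immediately bounded) and its negation. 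In the remaining case one computes $0\le (\ddt{} - \Delta)Q$ and obtains, using $\dot\varphi\le C$ from Lemma \ref{lemmaphiagain}(ii),
$$0 \le C\tr{g}{\hat g} + (A+1)\log\frac{\Omega}{\omega^n} + (A+1)n - A\,\tr{\omega}{\Big(\hat\omega_t - \frac{(1-\eta)R_h}{A}\Big)} + C.$$
Since $\hat\omega_t - \frac{(1-\eta)R_h}{A} \ge \hat\omega_t - \frac{R_h}{A}\cdot\frac{\ve_0 A}{\ve_0} $ — more precisely, for $A$ large enough $\frac{1-\eta}{A} \le \ve_0$, so $\hat\omega_t - \frac{(1-\eta)R_h}{A} \ge \hat\omega_t - \ve_0 R_h \ge c_0\omega_0$ by \eqref{lbhot} — the negative trace term dominates, and after using $\tr{g}{\hat g}\le C\tr{g}{g_0}$ we conclude $\tr{g}{g_0}\le C\log\frac{\Omega}{\omega^n} + C$ at that point. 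Then the standard inequality $\tr{g_0}{g}\le \frac{1}{(n-1)!}(\tr{g}{g_0})^{n-1}\frac{\omega^n}{\omega_0^n}$ together with Lemma \ref{lemmaphiagain}(ii) (giving $\omega^n/\Omega\le C$) and the boundedness of $x\mapsto x|\log x|^{n-1}$ near $0$ yields $\tr{g_0}{g}\le C'$ at the maximum of $Q$. Finally, $\varphi$ is bounded by Lemma \ref{lemmaphiagain}(i), so $Q$ is bounded above at its maximum, hence everywhere; exponentiating gives $\tr{\hat g}{g}\le C|s|^{-2(1-\eta)}_h$, and since $\pi^*\omega_N \le C\omega_0$ on compact sets — or rather using $\tr{g_0}{g}\le C(\tr{\hat g}{g})\cdot(\text{bounded factor})$ away from $E$ — one gets \eqref{radialestimate}.

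The step I expect to be the main obstacle is verifying that the power of $|s|_h$ truly improves: one needs to check that the curvature term can absorb a loss of size $\eta$ without the argument degenerating. Concretely, everything hinges on choosing $A$ large first (depending only on $c_0,\ve_0$ and the background geometry) so that $\frac{1-\eta}{A}\le\ve_0$ and the coefficient in front of $\tr{\omega}{(\hat\omega_t - \frac{(1-\eta)R_h}{A})}$ beats $C\tr{g}{\hat g}$; then $\eta>0$ can be taken to be, say, anything in $(0,1)$ — in fact the naive argument with $\delta=0$ already gives the exponent $1$, and the gain to $1-\eta$ is what requires the honest bookkeeping. One must also confirm that the torsion identity \eqref{useful}, which was used to simplify \eqref{log2}, is still the only structural input needed, so that no new torsion terms appear; this is immediate since the form of the evolution inequality is unchanged. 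No other difficulty arises, as all remaining estimates are verbatim from Lemmas \ref{away} and \ref{bds}.
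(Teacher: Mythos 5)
Your proposal has a fatal flaw at the very first step: with $(1-\eta)$ in place of $(1+\delta)$, the quantity $Q$ does \emph{not} tend to $-\infty$ on $E$, so the maximum principle cannot be applied. Near a point of $E$, in local coordinates $(z_1,z_2)$ with $E=\{z_1=0\}$ and $|s|_h\approx |z_1|$, the blown-down metric $\hat g=\pi^*g_N$ degenerates like $|z_1|^2$ in the $E$-direction, so $\hat g^{2\ov 2}\sim |z_1|^{-2}$ and hence $\tr{\hat g}{g}\sim c\,|s|_h^{-2}$ for \emph{any} smooth metric $g$ (in particular already at $t=0$). Thus
$\log\tr{\hat g}{g}+(1-\eta)\log|s|_h^2 \sim -2\eta\log|s|_h \to +\infty$ near $E$, and since the remaining terms $-A\varphi$ and $1/(\tilde\varphi+C_0)$ are bounded, $Q\to+\infty$ along $E$. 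There is no interior maximum to work at.

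Worse, the estimate you aim to extract, $\tr{\hat g}{g}\le C|s|_h^{-2(1-\eta)}$, is simply false — it fails already for $g=g_0$ by the computation above, and the exponent $2$ in Lemma \ref{bds} is sharp precisely because of the intrinsic degeneration of $\pi^*\omega_N$ along $E$. The content of Lemma \ref{lemmaradial} is a bound on $\tr{g_0}{g}$, not on $\tr{\hat g}{g}$, and these two traces behave very differently near $E$; one cannot shuttle freely between them without losing a factor of $|s|_h^{-2}$.

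The idea you are missing is the one the paper actually uses: one must combine \emph{both} traces in a single test quantity, with a large weight on the $\hat g$-trace. The paper takes essentially
$$Q_\delta=\log\tr{g_0}{g}+A\log\bigl((\tr{\hat g}{g})|s|_h^{2(1+\delta)}\bigr)-A^2\varphi+\text{(two Phong--Sturm terms)}.$$
Here the second term is bounded above by Lemma \ref{bds} (which is why that lemma is proved first, with $1+\delta$), and $\log\tr{g_0}{g}$ contributes an honest $-\tr{g}{g_0}$ in the evolution after a suitable choice of $A$; the maximum principle then bounds $Q_\delta$ uniformly in $\delta$. Letting $\delta\to 0$ gives $\log\tr{g_0}{g}+A\log\bigl((\tr{\hat g}{g})|s|_h^2\bigr)\le C$, and since $\tr{g_0}{g}\le C\tr{\hat g}{g}$ this yields $(\tr{g_0}{g})^{A+1}|s|_h^{2A}\le C$, i.e.\ $\eta=1/(A+1)>0$. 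The exponent gain $\eta$ comes from the \emph{multiplicative} interaction of the two traces in the test function, not from weakening the weight $|s|_h^{2(1+\delta)}$ to $|s|_h^{2(1-\eta)}$ — the latter, as explained, destroys the argument rather than strengthening it.
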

\begin{proof}
As in the previous lemma, we write $\hat{\omega} = \pi^* \omega_N$.  Define on $M'$,
\begin{equation}
Q_{\delta} = \log \tr{g_0}{g} + A \log \left((\tr{\hat{g}}{g}) |s|^{2(1+\delta)}_h\right) - A^2 \varphi + \frac{1}{\tilde{\psi}+\tilde{C}} + \frac{1}{\tilde{\varphi}+\tilde{C}},
\end{equation}
where $\tilde{\psi}$ and $\tilde{\varphi}$ are defined by
$$\tilde{\psi} := - \log \left((\tr{\hat{g}}{g}) |s|^{2(1+\delta)}_h\right) + A \varphi, \quad \tilde{\varphi} := \varphi - \frac{1+\delta}{A} \log |s|^{2}_h$$
so that in particular
\begin{equation} \label{psi}
\tilde{\psi}=  - \log (\tr{\hat{g}}{g}) + A \tilde{\varphi},
\end{equation}
and we may write
\begin{equation} \label{Qalt}
Q_{\delta} = \log \tr{g_0}{g} - A\tilde{\psi} + \frac{1}{\tilde{\psi}+\tilde{C}} + \frac{1}{\tilde{\varphi}+\tilde{C}}.
\end{equation}
The constant $\tilde{C}$ is chosen so that $\tilde{\psi}+\tilde{C} \ge 1$ and $\tilde{\varphi}+\tilde{C} \ge 1$ (we can find such a constant $\tilde{C}$ because $|\varphi|$ is bounded, and $(\tr{\hat{g}}{g}) |s|_h^2$ is bounded from above by Lemma \ref{bds}).  The constant $A>0$ is to be determined.  Observe that $Q_{\delta}$ is  the quantity used in Lemma 2.5 of \cite{SW1} with the addition of two ``Phong-Sturm terms'' $1/(\tilde{\psi}+\tilde{C})$ and $1/(\tilde{\varphi}+\tilde{C})$.  We have that $Q_{\delta}$ tends to negative infinity along $E$.

We wish to show that  $Q_{\delta}$ is bounded from above.  From Lemmas \ref{lemmaphiagain} and  \ref{bds}, it suffices to show that
 we have a uniform upper bound for $\tr{g_0}{g}$, independent of $\delta$ at a point where $Q_{\delta}$ achieves a maximum.   Recall from (\ref{eqnfrombigcalc}) and (\ref{log2}), there exists a uniform constant $C_0$ such that
\begin{equation} \label{log1again}
\left( \ddt{} - \Delta \right) \log \tr{g_0}{g} \le C_0 \tr{g}{g_0} + \frac{2}{(\tr{g_0}{g})^2} \textrm{Re} \left( g^{\ov{q}k} (T_0)^i_{ki} \partial_{\ov{q}} \tr{g_0}{g} \right),
\end{equation}
and
\begin{equation} \label{log2again}
\left( \ddt{} - \Delta \right) \log \tr{\hat{g}}{g} \le C_0 \tr{g}{\hat{g}} + \frac{2}{(\tr{\hat{g}}{g})^2} \textrm{Re} \left( g^{\ov{q}k} \hat{T}^i_{ki} \partial_{\ov{q}} \tr{\hat{g}}{g} \right),
\end{equation}
where we are assuming, without loss of generality, that we are working at a point with $\tr{g_0}{g} \ge 1$ and $\tr{\hat{g}}{g} \ge 1$ (note that  $\tr{g_0}{g} \le C \tr{\hat{g}}{g}$).  Observe that the inequality (\ref{log2again}) makes use of  condition $(*)$.

Compute using (\ref{Qalt}), (\ref{log1again}),
\begin{align*}
\left( \ddt{} - \Delta\right)Q_{\delta} & \le C_0 \tr{g}{g_0} +  \frac{2}{(\tr{g_0}{g})^2} \textrm{Re} \left( g^{\ov{q}k} (T_0)^i_{ki} \partial_{\ov{q}} \tr{g_0}{g} \right) \\
& -  \left( A + \frac{1}{(\tilde{\psi}+\tilde{C})^2} \right) \left( \ddt{} - \Delta \right) \tilde{\psi} \\
& - \frac{1}{(\tilde{\varphi}+\tilde{C})^2} \left( \ddt{} - \Delta \right) \tilde{\varphi} \\
& - \frac{2}{(\tilde{\psi}+\tilde{C})^3} | \partial \tilde{\psi}|^2_g - \frac{2}{(\tilde{\varphi}+\tilde{C})^3} | \partial \tilde{\varphi}|^2_g.
\end{align*}
Hence, using (\ref{log2again}),
\begin{align*}
\left( \ddt{} - \Delta\right)Q_{\delta}
& \le  C_0 \tr{g}{g_0} +  \frac{2}{(\tr{g_0}{g})^2} \textrm{Re} \left( g^{\ov{q}k} (T_0)^i_{ki} \partial_{\ov{q}} \tr{g_0}{g} \right)  + C_0(A +1) \tr{g}{\hat{g}} \\
& +  \left( A + \frac{1}{(\tilde{\psi}+\tilde{C})^2} \right) \cdot \frac{2}{(\tr{\hat{g}}{g})^2} \textrm{Re} \left( g^{\ov{q}k} \hat{T}^i_{ki} \partial_{\ov{q}} \tr{\hat{g}}{g} \right)  \\
& - \left( A \left(A + \frac{1}{(\tilde{\psi}+\tilde{C})^2} \right) + \frac{1}{(\tilde{\varphi}+\tilde{C})^2} \right) \dot{\varphi} \\
& + \left( A \left(A + \frac{1}{(\tilde{\psi}+\tilde{C})^2} \right) + \frac{1}{(\tilde{\varphi}+\tilde{C})^2} \right) \tr{\omega}{\left( \omega - \hat{\omega}_t + \frac{(1+\delta)R_h}{A} \right)} \\
& - \frac{2}{(\tilde{\psi}+\tilde{C})^3} | \partial \tilde{\psi}|^2_g - \frac{2}{(\tilde{\varphi}+\tilde{C})^3} | \partial \tilde{\varphi}|^2_g.
\end{align*}
For all $A$ sufficiently large, we have from (\ref{lbhot}),
$$\frac{1}{2} A \hat{\omega}_t - (1+\delta) R_h \ge c_0 \omega_0.$$
Hence, we may choose $A$ sufficiently large (and independent of $\delta$) so that
\begin{align*}
A^2 \hat{\omega}_t - A(1+\delta) R_h &= \frac{1}{2} A^2 \hat{\omega}_t + A\left(  \frac{1}{2} A \hat{\omega}_t - (1+\delta) R_h \right) \\
& \ge C_0(A+1) \hat{\omega} + (C_0+1) \omega_0,
\end{align*}
for $C_0$ as above.  Hence for this choice of $A$,
\begin{align*}
\left( \ddt{} - \Delta\right)Q_{\delta} & \le    - \tr{g}{g_0} +  \frac{2}{(\tr{g_0}{g})^2} \textrm{Re} \left( g^{\ov{q}k} (T_0)^i_{ki} \partial_{\ov{q}} \tr{g_0}{g} \right)   \\
& +  \left( A + \frac{1}{(\tilde{\psi}+\tilde{C})^2} \right) \cdot \frac{2}{(\tr{\hat{g}}{g})^2} \textrm{Re} \left( g^{\ov{q}k} \hat{T}^i_{ki} \partial_{\ov{q}} \tr{\hat{g}}{g} \right)  \\
& - B \dot{\varphi} - \frac{2}{(\tilde{\psi}+\tilde{C})^3} | \partial \tilde{\psi}|^2_g - \frac{2}{(\tilde{\varphi}+\tilde{C})^3} | \partial \tilde{\varphi}|^2_g +C',
\end{align*}
for $B= A^2+A+1$, where we are making use of Lemma \ref{lemmaphiagain} (ii).
At a point where $Q_{\delta}$ achieves its maximum, we have
$$\frac{1}{\tr{g_0}{g}} \partial_{\ov{q}} \tr{g_0}{g} = \left(A + \frac{1}{(\tilde{\psi}+\tilde{C})^2}\right) \partial_{\ov{q}} \tilde{\psi} + \frac{1}{(\tilde{\varphi}+\tilde{C})^2} \partial_{\ov{q}} \tilde{\varphi}.
$$
Observe also that from (\ref{psi})
$$\frac{1}{\tr{\hat{g}}{g}} \partial_{\ov{q}} \tr{\hat{g}}{g} = - \partial_{\ov{q}} \tilde{\psi} + A \partial_{\ov{q}} \tilde{\varphi}.$$
Hence
\begin{align*}
\left( \ddt{} - \Delta\right)Q_{\delta} & \le    - \tr{g}{g_0} +  \frac{C''(A+1)^2(\tilde{\psi}+\tilde{C})^3}{(\tr{g_0}{g})^2} \tr{g}{g_0} + \frac{1}{(\tilde{\psi}+\tilde{C})^3} | \partial \tilde{\psi}|^2_g \\
& + \frac{C''}{(\tr{g_0}{g})^2} \tr{g}{g_0} + \frac{1}{(\tilde{\varphi}+\tilde{C})^3} | \partial \tilde{\varphi}|^2_g
  \\
 & + \frac{C''(A+1)^2(\tilde{\psi}+\tilde{C})^3}{(\tr{\hat{g}}{g})^2} \tr{g}{\hat{g}} +  \frac{1}{(\tilde{\psi}+\tilde{C})^3} | \partial \tilde{\psi}|^2_g \\
& + \frac{C''(A+1)^4(\tilde{\varphi}+\tilde{C})^3}{(\tr{\hat{g}}{g})^2} \tr{g}{\hat{g}} +  \frac{1}{(\tilde{\varphi}+\tilde{C})^3} | \partial \tilde{\varphi}|^2_g \\
& - B \dot{\varphi} - \frac{2}{(\tilde{\psi}+\tilde{C})^3} | \partial \tilde{\psi}|^2_g - \frac{2}{(\tilde{\varphi}+\tilde{C})^3} | \partial \tilde{\varphi}|^2_g +C'.
\end{align*}
But we may assume without loss of generality that at the maximum of $Q_{\delta}$, we have $(\tr{g_0}{g})^2 \ge 4C'' (A+1)^2 (\tilde{\psi}+\tilde{C})^3$ otherwise $Q_{\delta}$ is bounded from above and we're done.  Similarly, since $\tr{g_0}{g} \le C \tr{\hat{g}}{g}$, we may assume that $(\tr{\hat{g}}{g})^2 \ge 4 C'' (A+1)^2 (\tilde{\psi}+\tilde{C})^3$ and $(\tr{\hat{g}}{g})^2 \ge 4C'' (A+1)^4 (\tilde{\varphi}+\tilde{C})^3$.
Hence
\begin{align*}
\left( \ddt{} - \Delta\right)Q_{\delta} & \le - \frac{1}{4} \tr{g}{g_0}  - B \log \frac{\omega^n}{\omega_0^n} + C',
\end{align*}
and we can conclude as in the proof of Lemma \ref{away} that $\tr{g_0}{g}$ is bounded from above at the maximum of $Q_{\delta}$.  It follows, using Lemma \ref{bds} and the uniform bound on $\varphi$  that $Q_{\delta}$ is bounded from above uniformly in $\delta$.  Letting $\delta \rightarrow 0$, we obtain
$$\log \tr{g_0}{g} + A \log (\tr{\hat{g}}{g}) |s|_h^2 \le C.$$
The lemma then follows from the same argument as in Lemma 2.5 of \cite{SW1}.  Indeed, since $\tr{g_0}{g} \le C \tr{\hat{g}}{g}$, we have
$$\log (\tr{g_0}{g})^{A+1} |s|_h^{2A} \le C,$$
and the estimate (\ref{radialestimate}) follows with $\eta = 1/(A+1)>0$.
\end{proof}

We can now easily finish the proof of Theorem \ref{thm2} following the arguments of \cite{SW1}.

\begin{proof}[Proof of Theorem \ref{thm2}] Identify a small neighborhood of $y_0 \in N$ with a small ball $B$ centered at the origin in $\mathbb{C}^2$, and consider $\omega(t)$ as a metric on $B \setminus \{ 0 \}$, then we have the following estimates:
\begin{enumerate}
\item[(i)] Let $S_r$ be a small sphere of radius $r>0$ centered at the origin in $B$.  Then the diameter of $S_r$ with respect to the metric induced from $\omega(t)$ is uniformly bounded from above, independent of $r$.
\item[(ii)]  For any $x \in B \setminus \{0 \}$, the length of a radial path $\gamma(\lambda)=\lambda x$ for $\lambda \in (0,1]$ with respect to the metric $\omega(t)$ is uniformly bounded from above by $C|x|^{\eta}$ for a uniform constant $C$.
\end{enumerate}
Indeed (i) follows from Lemma \ref{bds} and the argument of Lemma 2.7 (i) in \cite{SW1}.  For part (ii), note that from Lemma \ref{lemmaradial} we have
$$|V|^2_{\omega(t)} \le  C |s|_h^{-2(1-\eta)} |V|^2_{\omega_0} \le C' |s|_h^{2\eta},$$
for $V$ the vector field  $V=z_i \frac{\partial}{\partial z_i}$ on $B$.  This is because $|V|^2_{\omega_0} \le C|s|_h^{2}$, as can be seen by writing down an explicit metric on $B$ which is uniformly equivalent to $\omega_0$ (see e.g. \cite[Lemma 2.6]{SW1}).  Then (ii) follows from the argument of Lemma 2.7 (ii) in \cite{SW1}.

Given (i) and (ii), the proof of Theorem \ref{thm2} follows exactly as in Section 3 of \cite{SW1}.
 \end{proof}

Let us now discuss condition $(*)$ in more detail.
As before let $\pi:M\to N$ be the blowup of finitely many points $y_i\in N$ with exceptional divisors $E_i$.

\begin{proposition}\label{cruxes}
The following are equivalent:
\begin{enumerate}
\item[(*)] There exist a function
$f\in C^\infty(M)$ and a smooth real $(1,1)$ form $\beta$ on $N$ such that
\begin{equation}\label{crux3}
\alpha_T+\ddbar f = \pi^*\beta.
\end{equation}
\item[(**)] There exists a smooth real $(1,1)$ form $\beta$ on $N$ such that
\begin{equation}\label{crux4}
d\omega_0 = \pi^*(d\beta).
\end{equation}
\end{enumerate}
Furthermore, either of these implies that for any $i$ and for every point $x\in E_i$ we have that
\begin{equation}\label{zero}
(d\omega_0)_x \equiv 0.
\end{equation}
\end{proposition}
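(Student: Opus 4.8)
The plan is to get (**) from (*) by differentiation, to get (*) from (**) by a Bott--Chern cohomology computation on the blow-up, and to get \eqref{zero} from (**) (equivalently, from either condition) by a rank count for $d\pi$ along $E_i$. For \textbf{(*)$\Rightarrow$(**)}, I would simply apply $d$ to \eqref{crux3}: since $\Ric(\omega_0)=-\ddbar\log\det g_0$ is locally $\ddbar$ of a function it is $d$-closed, so $d\alpha_T=d\omega_0-T\,d\Ric(\omega_0)=d\omega_0$, while $d\ddbar f=0$; hence $d\omega_0=\pi^*(d\beta)$, which is \eqref{crux4} with the same $\beta$.

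For \textbf{(**)$\Rightarrow$(*)} I would set $\gamma:=\alpha_T-\pi^*\beta$, a smooth real $(1,1)$-form on $M$. By the computation just made $d\gamma=d\omega_0-\pi^*(d\beta)=0$, so $\gamma$ is $d$-closed. Moreover $\int_{E_i}\alpha_T=0$ for each $i$ (the characterisation of the exceptional curves recalled in Section~\ref{sectionthm1}, cf. \cite[Section~6]{TW3}), and $\pi$ contracts each $E_i$ to a point, so $\pi^*\beta$ restricts to $0$ on $E_i$; hence $\int_{E_i}\gamma=0$. Now invoke the analogue for Bott--Chern cohomology of the de Rham blow-up formula: for a blow-up $\pi\colon M\to N$ of finitely many points on a compact complex surface,
$$\pi^*\colon H^{1,1}_{BC}(N,\mathbb{R})\oplus\bigoplus_i\mathbb{R}\,[E_i]_{BC}\ \xrightarrow{\ \sim\ }\ H^{1,1}_{BC}(M,\mathbb{R}),$$
where $[E_i]_{BC}$ is the Bott--Chern class of $E_i$, represented by the curvature of a smooth metric on $[E_i]$. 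Writing $[\gamma]_{BC}=\pi^*b+\sum_i c_i[E_i]_{BC}$ and passing to de Rham cohomology gives $[\gamma]_{dR}=\pi^*(\cdot)+\sum_i c_i[E_i]_{dR}$; pairing with $E_j$ and using $[E_i]_{dR}\cdot[E_j]_{dR}=-\delta_{ij}$ together with $\pi^*(\cdot)\cdot[E_j]=0$ (since $\pi(E_j)$ is a point) yields $-c_j=\int_{E_j}\gamma=0$. Hence $[\gamma]_{BC}=\pi^*b$. Choosing a $d$-closed real $(1,1)$-form $\beta_0$ on $N$ with $[\beta_0]_{BC}=b$, two forms in the same real Bott--Chern class differ by $\ddbar$ of a real function, so $\gamma-\pi^*\beta_0=\ddbar f$ for some $f\in C^\infty(M,\mathbb{R})$. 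Rearranging, $\alpha_T+\ddbar(-f)=\pi^*(\beta+\beta_0)$, which is \eqref{crux3} with the form $\beta+\beta_0$ and the function $-f$.

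For the \textbf{``furthermore''} statement it suffices, by the equivalence, to argue from (**). Fix $i$ and $x\in E_i$. Since $\pi$ sends the curve $E_i$ to the single point $y_i$, the differential $d\pi_x\colon T_xM\to T_{y_i}N$ annihilates $T_xE_i$, so it has complex rank $\le 1$, hence real rank $\le 2$. Therefore, for any $v_1,v_2,v_3\in T_xM$ the vectors $d\pi_x v_1,d\pi_x v_2,d\pi_x v_3$ are linearly dependent, and $(\pi^*(d\beta))_x(v_1,v_2,v_3)=(d\beta)_{y_i}(d\pi_x v_1,d\pi_x v_2,d\pi_x v_3)=0$. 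Thus $(d\omega_0)_x=(\pi^*(d\beta))_x=0$, which is \eqref{zero}.

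The main obstacle is the Bott--Chern blow-up formula used in (**)$\Rightarrow$(*): one cannot replace it by the de Rham formula alone, because on a general compact complex surface a $d$-exact real $(1,1)$-form need \emph{not} be $\ddbar$ of a function (this already fails on Kodaira surfaces, hence on their blow-ups, which do occur in the finite-time non-collapsing setting, cf. the remarks following Theorem~\ref{thm1}). Thus the passage from the de Rham identity $[\gamma]_{dR}\in\pi^*H^2(N;\mathbb{R})$ to the $\ddbar$-level identity genuinely requires controlling Bott--Chern cohomology on the blow-up; for a blow-up of finitely many points on a surface this formula can alternatively be verified by hand from the explicit local model of the blow-down.
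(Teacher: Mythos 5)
Your argument for \textbf{(*)$\Rightarrow$(**)} and for the ``furthermore'' statement matches the paper's proof (the paper also decomposes $T_xM=T_xE\oplus H$ and notes $\pi_*$ kills $T_xE$, so the three pushed-forward vectors lie in a real $2$-plane and a $3$-form vanishes on them). Your proof of \textbf{(**)$\Rightarrow$(*)} is correct but takes a genuinely different and considerably lighter route than the paper's. The paper argues dynamically: it extracts a limit current $\omega_T=\omega_0-T\Ric(\omega_0)+\ddbar\vp_T$ along the flow, pushes it forward to $N$, produces a local potential $F$ near $y_0$, uses a cutoff to globalize, and then invokes Federer's support theorem (the fact that a flat $(1,1)$ current supported on $E$ is $\lambda[E]$) to kill the excess term, finally concluding by elliptic regularity of $\Delta_0$. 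You instead observe that under (**) the form $\gamma=\alpha_T-\pi^*\beta$ is $d$-closed, so it has a Bott--Chern class; you then feed it into the Bott--Chern blow-up isomorphism $H^{1,1}_{\mathrm{BC}}(M,\mathbb{R})\cong \pi^*H^{1,1}_{\mathrm{BC}}(N,\mathbb{R})\oplus\bigoplus_i\mathbb{R}[E_i]_{\mathrm{BC}}$ and determine the coefficients $c_i$ by intersection with $E_j$, using $\int_{E_j}\alpha_T=0$ and $\int_{E_j}\pi^*\beta=0$. The exact sequence and splitting you need are exactly what the paper itself establishes (from Fujiki) in the proof of Proposition~\ref{weakstar}; the crucial observation you add is that (**) upgrades $\alpha_T$ from merely $\de\db$-closed (Aeppli) to $d$-closed after subtracting $\pi^*\beta$, so the Bott--Chern version can be applied directly. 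Your approach is more unified with Proposition~\ref{weakstar}, avoids positive currents, the flow, and Federer's theorem entirely, and your remark about why a purely de Rham argument is insufficient (failure of the $\ddbar$-lemma on non-K\"ahler surfaces such as blow-ups of Kodaira surfaces) is exactly the right caution. One cosmetic point: the displayed isomorphism is written with $\pi^*$ acting on the whole direct sum including the $\mathbb{R}[E_i]_{\mathrm{BC}}$ factors, which live on $M$, not $N$; the intended map is $(b,(c_i))\mapsto\pi^*b+\sum_i c_i[E_i]_{\mathrm{BC}}$, as your subsequent computation makes clear.
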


\begin{proof}
It is obvious that $(*)$ implies $(**)$. As before, for simplicity we can assume that
$\pi$ is the blowup of $N$ at one single point $y_0$, with exceptional divisor $E$.
To see that $(**)$ implies $\eqref{zero}$, fix $x\in E$ and
write $T_xM=T_xE\oplus H$ for some complementary real $2$-dimensional subspace $H$. For any vector $X\in T_xM$
while $X=X'+X''$ with $X'\in T_xE$ and $X''\in H$. Note that by definition we have $\pi_* X'=0$.
Then for any three vectors $X,Y,Z\in T_xM$ we have
\[\begin{split}
(d\omega_0)_x(X,Y,Z)&=(d\beta)_{\pi(x)}(\pi_*X,\pi_*Y,\pi_*Z)\\
&=(d\beta)_{\pi(x)}(\pi_*X'',\pi_*Y'',\pi_*Z'')=0,
\end{split}\]
since $\pi_*X'',\pi_*Y'',\pi_*Z''$ belong to a real $2$-dimensional plane, while $d\beta$ is a $3$ form.

Now we show that $(**)$ implies $(*)$. Consider the metrics along the Chern-Ricci flow
$\omega(t)=\omega_0-t\Ric(\omega_0)+\ddbar\vp(t)$. Viewing $\omega(t)$ as currents, their mass is bounded by
$$\int_M \omega(t)\wedge\omega_0=\int_M (\omega_0-t\Ric(\omega_0))\wedge\omega_0\leq C$$
independent of $t$, so by weak compactness there is a sequence
$t_i\to T$ and an $L^1$ function $\vp_T$ such that $\vp(t_i)$ converges to $\vp_T$ in $L^1$.
Furthermore, we know from Theorem \ref{thm1} that $\vp_T$ is smooth away from $E$. Call
$$\omega_T=\omega_0-T\Ric(\omega_0)+\ddbar\vp_T,$$
which is a $\de\db$-closed positive real $(1,1)$ current on $M$, smooth away from $E$. Furthermore
we have that $d\omega_T=d\omega_0$ is a smooth form.
Consider the pushforward current $\pi_* \omega_T$. It is a $\de\db$-closed positive real $(1,1)$ current on $N$, smooth away from $y_0$, and it satisfies
$$d\pi_* \omega_T=\pi_*d\omega_0=\pi_*\pi^*(d\beta).$$
Now, for any smooth $2$ form $\gamma$ on $N$, if we first pull it back  as a form $\pi^*\gamma$
and then push it forward as a current $\pi_*\pi^*\gamma$ , we get back the same current $\gamma$.
Indeed, for any test $2$ form $\psi$ on $N$ we have
\begin{equation}\label{bir}
\int_N \psi\wedge \pi_*\pi^*\gamma=\int_M \pi^*\psi\wedge\pi^*\gamma=\int_N\psi\wedge\gamma,
\end{equation}
where the last equality holds because $\pi$ is bimeromorphic.
It follows that $d\pi_*\omega_T=d\beta$, or in other words the $(1,1)$ current $\pi_*\omega_T-\beta$ is $d$-closed.
Therefore it is locally $\de\db$-exact, i.e. there exists an  $L^1$ function $F$ defined on a neighborhood
$V$ of $y_0$ such that
$$\pi_*\omega_T=\beta+\ddbar F,$$
holds as currents on $V$. Furthermore, $F$ is smooth away from $y_0$ (by regularity of the complex Laplacian $\Delta_0=\tr{\omega_0}{(\ddbar)}$). Fix a smooth cutoff function $\rho$ which is identically zero outside $V$
and identically $1$ on a smaller neighborhood $U\subset V$ of $y_0$.
Then the function $\rho F$ is defined on the whole of $N$ and smooth away from $y_0$.
Now consider $\rho F\circ \pi$, which is an  $L^1$ function on $M$, smooth away from $E$, so we can define
$\pi^*\beta+\ddbar (\rho F\circ\pi)$, which is a real $(1,1)$ current on $M$, positive on $\pi^{-1}(U)$. We have that
$$\gamma=\omega_T-\pi^*\beta-\ddbar (\rho F\circ\pi)$$ is a real $(1,1)$ current on $M$ which is
$d$-closed and smooth away from $E$. Furthermore its restriction $\gamma|_{\pi^{-1}(U)}$ is supported on $E$, and it is written as the difference of two positive currents.
This last condition implies that its coefficients are measures, and so $\gamma|_{\pi^{-1}(U)}$ is a flat current (in the terminology of \cite{Fe}), and Federer's support theorem \cite[4.1.15]{Fe} implies that $\gamma|_{\pi^{-1}(U)}=\lambda[E]$ for some
real constant $\lambda$. But integrating $\gamma$ over $E$ we see that $\lambda=0$ and so $\gamma|_{\pi^{-1}(U)}=0$.
Therefore we have that $\gamma=\pi^*\eta$ for a smooth $d$-closed real $(1,1)$ form $\eta$ on $N$.
Therefore,
\[\begin{split}
\ddbar(\rho F\circ\pi - \vp_T)&=\omega_T-\pi^*(\beta+\eta)-\ddbar\vp_T\\
&=\omega_0-T\Ric(\omega_0)-\pi^*(\beta+\eta),
\end{split}\]
which is smooth, so by regularity of $\Delta_0$, we have that $\rho F\circ\pi - \vp_T=-f$ a smooth function on $M$,
which satisfies \eqref{crux3} with $\beta$ replaced by $\beta+\eta$.
\end{proof}

\begin{remark} \label{remarkdw}  It is easy to see that the condition $d\omega_0|_E \equiv 0$ does not hold for all choices of $\omega_0$.  Indeed, fix a point $x$ in $E$ and suppose that $(d\omega_0)_x=0$.     Define a $(0,1)$ form on $\mathbb{C}^2$ by $\gamma = z_1 \overline{z_2} d\overline{z_1}$ and, by identifying a neighborhood of $x$ in $E$ with a ball centered at the origin of $\mathbb{C}^2$, and extending $\gamma$ in an arbitrary way outside of a neighborhood of $x$, we may consider $\gamma$ as a $(0,1)$ form on $M$.   Consider $\tilde{\omega}_0 = \omega_0 + \ve (\partial \gamma + \overline{\partial \gamma})$ for $\ve>0$.  As long as $\ve$ is sufficiently small, $\tilde{\omega}_0$ is a Gauduchon metric.  But one can check that $(d\tilde{\omega}_0)_x \neq 0$.
\end{remark}

Let us now remark that a weaker version of condition $(*)$ always holds.

\begin{proposition}\label{weakstar} There exist a smooth
$(0,1)$ form $\gamma$ on $M$ and a smooth real $(1,1)$ form $\beta$ on $N$ such that
\begin{equation}\label{goal}
\alpha_T+\de\gamma+\ov{\de\gamma}=\pi^*\beta.
\end{equation}
\end{proposition}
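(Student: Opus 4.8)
The plan is to isolate the first step of the proof of Lemma~\ref{lemmaalpha}(ii) and then project onto bidegree $(1,1)$ over $N$. As elsewhere in the paper, I will assume for simplicity that $\pi$ contracts a single $(-1)$-curve $E$; the general case is identical. The first step is to apply Buchdahl's Corollary~9 in \cite{Bu1} to the $\de\db$-closed real $(1,1)$-form $\alpha_T$ on the compact complex surface $M$: this yields a smooth $(0,1)$-form $\gamma_1$ on $M$ such that
$$a:=\alpha_T+\de\gamma_1+\ov{\de\gamma_1}$$
is a $d$-closed real $(1,1)$-form on $M$. Unlike in Lemma~\ref{lemmaalpha}(ii), no case distinction is needed here, since Buchdahl's result applies on an arbitrary compact complex surface.

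Next I would analyse the de Rham class $[a]\in H^2(M;\mathbb{R})$. Fix a smooth Hermitian metric $h$ on $[E]$ with curvature $R_h$, so that $[R_h]$ is Poincar\'e dual to $[E]$ and $\int_E R_h=-1$. By the structure of the cohomology of a blow-up (see e.g.\ \cite[Theorem~I.9.1]{bhpv}) we have $H^2(M;\mathbb{R})=\pi^*H^2(N;\mathbb{R})\oplus\mathbb{R}[R_h]$, so I may write $[a]=\pi^*[b_0]+c[R_h]$ for a smooth real $2$-form $b_0$ on $N$ and a constant $c\in\mathbb{R}$. Integrating over $E$ and using that $\pi(E)$ is a point (so $\int_E\pi^*b_0=0$), that $\int_E(\de\gamma_1+\ov{\de\gamma_1})=0$ by Stokes' theorem (this form differs from $d(\gamma_1+\ov{\gamma_1})$ by terms of type $(2,0)$ and $(0,2)$, which restrict to zero on the curve $E$), and that $\int_E\alpha_T=0$ by the non-collapsing hypothesis \cite[Section~6]{TW3}, I obtain $c=-\int_E a=0$. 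Hence $[a]=\pi^*[b_0]$ in de Rham cohomology, so $a=\pi^*b_0+d\eta$ for some smooth real $1$-form $\eta$ on $M$.

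Finally I would take the $(1,1)$-component of the identity $a=\pi^*b_0+d\eta$. Since $a$ has type $(1,1)$, since $\pi$ is holomorphic so $\pi^*$ commutes with taking $(p,q)$-components, and since the $(1,1)$-part of $d\eta$ equals $\de\gamma'+\ov{\de\gamma'}$ with $\gamma':=\eta^{0,1}$ (using $\ov{\eta^{0,1}}=\eta^{1,0}$), this reads $a=\pi^*\beta+\de\gamma'+\ov{\de\gamma'}$, where $\beta:=(b_0)^{1,1}$ is a smooth real $(1,1)$-form on $N$ (real because $b_0$ is). Setting $\gamma:=\gamma_1-\gamma'$ and recalling the definition of $a$ then gives $\alpha_T+\de\gamma+\ov{\de\gamma}=\pi^*\beta$, as required. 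The only non-formal ingredient in this plan is the invocation of Buchdahl's Corollary~9, which is precisely what trades the $\de\db$-closedness of $\alpha_T$ for an honest $d$-closed form modulo a term of the shape $\de\gamma+\ov{\de\gamma}$, and rests on the structure of Bott--Chern/Aeppli cohomology of compact complex surfaces; the remaining steps are formal, using only the already-proved vanishing $\int_E\alpha_T=0$ and elementary bidegree bookkeeping.
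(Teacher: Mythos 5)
Your plan differs significantly from the paper's (which works intrinsically in Aeppli cohomology) and unfortunately contains a genuine gap at its very first step. You invoke Buchdahl's Corollary~9 to write the $\de\db$-closed form $\alpha_T$ as a $d$-closed form modulo a term $\de\gamma_1+\ov{\de\gamma_1}$, and you assert that ``no case distinction is needed here, since Buchdahl's result applies on an arbitrary compact complex surface.'' That assertion is false, and the failure is exactly the non-K\"ahler phenomenon at stake. Having a $d$-closed representative in a given Aeppli class is precisely the statement that that class lies in the image of the natural map $H^{1,1}_{\mathrm{BC}}(M,\mathbb{R})\to H^{1,1}_{\mathrm{A}}(M,\mathbb{R})$; this map is surjective when the $\de\db$-lemma holds (e.g.\ in the K\"ahler case, which is where the paper uses Corollary~9 in Lemma~\ref{lemmaalpha}(ii)), but not in general. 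A concrete obstruction: on a Hopf surface $H$ with the Gauduchon metric $\omega_H$, one has $b_2(H)=0$, so every $d$-closed real $(1,1)$-form $a$ is $d$-exact, hence (taking the $(1,1)$-component of $a=d\eta$) of the form $\de\gamma+\ov{\de\gamma}$, so its Aeppli class vanishes; on the other hand a direct computation gives $\int_H\Ric(\omega_H)\wedge\omega_H=\int_H\omega_H^2>0$, so $[\omega_H]_{\mathrm{A}}$ pairs non-trivially with the Bott--Chern class of $\Ric(\omega_H)$ and is therefore non-zero. Thus $\omega_H$ is a $\de\db$-closed real $(1,1)$-form which cannot be written as $a+\de\gamma+\ov{\de\gamma}$ with $a$ $d$-closed. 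The same phenomenon persists on blow-ups of such surfaces: the image of $H^{1,1}_{\mathrm{BC}}(M,\mathbb{R})\to H^{1,1}_{\mathrm{A}}(M,\mathbb{R})$ only depends on de Rham classes and so has dimension at most $b_2(M)$, which is in general strictly smaller than $\dim H^{1,1}_{\mathrm{A}}(M,\mathbb{R})$.

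The remaining steps of your plan (the blow-up decomposition of $H^2(M;\mathbb{R})$, the integration over $E$ showing $c=0$, and the bidegree bookkeeping) are all correct, but there is nothing to start them from once the first reduction to a $d$-closed form is unavailable. This is exactly why the paper proves Proposition~\ref{weakstar} by computing inside Aeppli cohomology rather than passing through de Rham cohomology: it transports Fujiki's short exact sequence for $H^{1,1}_{\mathrm{BC}}$ of a blow-up to $H^{1,1}_{\mathrm{A}}$ by duality, builds the linear functional $F([\psi]_{\mathrm{A}})=\int_M\eta_E\wedge\psi=2\pi\int_E\psi$ via the Poincar\'e--Lelong formula, and identifies $\pi^*H^{1,1}_{\mathrm{A}}(N,\mathbb{R})=\ker F$ by a dimension count. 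That argument never requires any Aeppli class to have a $d$-closed representative, which is precisely the hypothesis you cannot make on a non-K\"ahler surface.
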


\begin{proof}  We can again assume that there is only one exceptional divisor $E$.
Recall that the Bott-Chern cohomology group of a compact complex manifold $M$ is
$$H^{1,1}_{\mathrm{BC}}(M, \mathbb{R})=\frac{\{d\textrm{-closed real }(1,1)\textrm{ forms}\}}{\{\ddbar \psi, \psi\in C^\infty(M,\mathbb{R})\}},$$
while the Aeppli cohomology group is
$$H^{1,1}_{\mathrm{A}}(M, \mathbb{R})=\frac{\{\de\db\textrm{-closed real }(1,1)\textrm{ forms}\}}{\{\de\gamma+\ov{\de\gamma}, \gamma\in \Lambda^{0,1}(M)\}}.$$
These are finite dimensional real vector spaces, and when $n=2$ they are isomorphic. In fact, $H^{1,1}_{\mathrm{BC}}(M, \mathbb{R})\cong H^{1,1}_{\mathrm{A}}(M, \mathbb{R})^*$ through the pairing $H^{1,1}_{\mathrm{BC}}(M, \mathbb{R})\times H^{1,1}_{\mathrm{A}}(M, \mathbb{R})\to\mathbb{R}$ given by wedge and integration (see e.g. \cite{Sc}).

First of all recall a few facts from \cite[pp.737-738]{Fu}: the Bott-Chern cohomology group $H^{1,1}_{\mathrm{BC}}(M, \mathbb{R})$ is isomorphic to the
group one obtains by replacing in its definition smooth forms with currents. Therefore one has not only a pullback map $\pi^*:H^{1,1}_{\mathrm{BC}}(N, \mathbb{R})\to H^{1,1}_{\mathrm{BC}}(M, \mathbb{R})$ but also a pushforward map $\pi_*:H^{1,1}_{\mathrm{BC}}(M, \mathbb{R})\to H^{1,1}_{\mathrm{BC}}(N, \mathbb{R})$ induced by the pushforward of currents. Whenever $\pi$ is bimeromorphic (such as in our case), then $\pi_* \pi^*=\mathrm{Id}$ (cfr. \eqref{bir}), which implies in particular that $\pi^*:H^{1,1}_{\mathrm{BC}}(N, \mathbb{R})\to H^{1,1}_{\mathrm{BC}}(M, \mathbb{R})$ is injective.

The exact same statements hold for the Aeppli cohomology (see e.g. \cite[p.16]{Sc}.

Now \cite[Proposition 1.1]{Fu} gives us the following exact sequence
$$0\to \mathbb{R}[E]\to H^{1,1}_{\mathrm{BC}}(M, \mathbb{R})\overset{\pi_*}{\to} H^{1,1}_{\mathrm{BC}}(N, \mathbb{R})\to 0,$$
which splits using the map $\pi^*$, and so
$$H^{1,1}_{\mathrm{BC}}(M, \mathbb{R})\cong H^{1,1}_{\mathrm{BC}}(N, \mathbb{R})\oplus \mathbb{R}.$$
Therefore we also have
$$H^{1,1}_{\mathrm{A}}(M, \mathbb{R})\cong H^{1,1}_{\mathrm{A}}(N, \mathbb{R})\oplus \mathbb{R}.$$
We wish to identify the image $\pi^*H^{1,1}_{\mathrm{A}}(N, \mathbb{R})\subset H^{1,1}_{\mathrm{A}}(M, \mathbb{R}).$
We have just proved that $\dim_{\mathbb{R}}\mathrm{coker} \pi^*=1$. We have the Poincar\'e-Lelong formula
\begin{equation}\label{pl2}
2\pi[E]=\eta_E+\ddbar\log|s|^2_h,
\end{equation}
where $\eta_E$ is a $d$-closed smooth real $(1,1)$ form cohomologous to $2\pi c_1([E])$, and so it defines a
Bott-Chern cohomology class $[\eta_E]_{\mathrm{BC}}$ with the property that for any Aeppli class $[\psi]_{\mathrm{A}}$ on $M$ we have $$\int_M \eta_E\wedge\psi=2\pi\int_E \psi.$$
Therefore we can define a linear functional
$$F:H^{1,1}_{\mathrm{A}}(M,\mathbb{R})\to\mathbb{R},\quad F([\psi]_{\mathrm{A}})=\int_M \eta_E\wedge\psi.$$
which is obviously surjective, so $\mathrm{codim}_{\mathbb{R}}\ker F=1$.
But we also have that $\pi^*H^{1,1}_{\mathrm{A}}(N, \mathbb{R})\subset \ker F$ and since we have just proved
that these two spaces have the same dimension, it follows that $\pi^*H^{1,1}_{\mathrm{A}}(N, \mathbb{R})= \ker F$.
In other words, the sequence
$$0\to H^{1,1}_{\mathrm{A}}(N, \mathbb{R})\overset{\pi^*}{\to} H^{1,1}_{\mathrm{A}}(M, \mathbb{R})
\overset{F}{\to}\mathbb{R}\to 0,$$
is exact.

In our case we have $F([\alpha_T]_{\mathrm{A}})=2\pi\int_E \alpha_T=0$, and so $[\alpha_T]_{\mathrm{A}}=\pi^*[\beta]_{\mathrm{A}}$ for
some $\de\db$-closed smooth real $(1,1)$ form $\beta$ on $N$.
By definition of Aeppli class, this is precisely \eqref{goal}.
\end{proof}

 We end this section by describing more precisely the conjecture mentioned in the introduction.
 Assume we are in the setup of Theorem \ref{thm1}, and continuing with the same notation, we expect that the following results hold:
\begin{enumerate}
\item[(i)]  As $t \rightarrow T^-$, the metrics $g(t)$ converge to a smooth Gauduchon metric $g_T$ on $M'$ in $C^{\infty}_{\textrm{loc}}(M')$ (i.e. the result of Theorem \ref{thm1}).  Using $\pi$, we may regard $g_T$ as a Gauduchon metric on $N'$.
\item[(ii)]  Let $d_{g_T}$ be the distance function on $N'$ given by $g_T$.  Then there exists a unique metric $d_T$ on $N$ extending $d_{g_T}$ such that $(N, d_T)$ is a compact metric space homeomorphic to $N$ and $(N, d_T)$ is the metric completion of $(N', d_{g_T})$
\item[(iii)]  $(M, g(t)) \rightarrow (N, d_T)$ as $t \rightarrow T^-$ in the Gromov-Hausdorff sense.
\item[(iv)]  There exists a smooth maximal solution $g(t)$ of the Chern-Ricci flow on $N$ for $t \in (T, T_N)$ with $T<T_N \le \infty$ such that $g(t)$ converges to $g_T$ as $t\rightarrow T^+$ in $C^{\infty}_{\textrm{loc}}(N')$.  Furthermore, $g(t)$ is uniquely determined by $g_0$.
\item[(v)]  $(N, g(t)) \rightarrow (N, d_T)$ as $t \rightarrow T^+$ in the Gromov-Hausdorff sense.
\end{enumerate}

These results were proved for the case of the K\"ahler-Ricci flow by Song and the second-named author \cite{SW1,SW2}.
With the terminology of \cite{SW1, SW2}, we say that $g(t)$ performs a  \emph{canonical surgical contraction} if this occurs.

If the condition $(*)$ is imposed, Theorem \ref{thm2} shows that we obtain (ii) and (iii), except for the statement about identifying $(N, d_T)$ as the metric completion of $(N', d_{g_T})$.   We do not expect that there is any fundamental obstacle to establishing (iv) and (v) under the condition $(*)$, since the methods used (estimates obtained via the maximum principle, the weak solution constructed by Song-Tian \cite{ST3}, the results of Ko\l odziej \cite{Kol}) can most likely be generalized to this setting (see e.g. \cite{DK}).  Nevertheless, there are considerable technical challenges here.

We expect there are more difficulties in proving the full statement of (iii) (including the identification of the metric completion of $(N', d_{g_T})$ as in \cite{SW2}) or removing the assumption (*).  These problems may require  new techniques.

\section{The Hopf surfaces}\label{secthopf}

In this section we give a proof of the first part of Theorem \ref{surfaces}.  In fact, we consider more general Hopf manifolds, of complex dimension $n$.  Define
$H=(\mathbb{C}^n\setminus \{0\})/\sim$, where
$$(z_1,\dots, z_n) \sim (\alpha_1 z_1, \ldots, \alpha_n z_n),$$
where $|\alpha_1|=\dots=|\alpha_n| \neq 1$.
Following \cite[Section 8]{TW3} we consider the metric
$$\omega_H = \frac{\delta_{ij}}{r^2} \mn dz_i \wedge d\ov{z}_j,$$
where $r^2= |z_1|^2+\dots+|z_n|^2$.
We know from \cite{TW3} that the metric
$$\omega(t) = \omega_H - t \Ric(\omega_H)=
\frac{1}{r^2}\left((1-nt)\delta_{ij}+nt\frac{\ov{z}_i z_j}{r^2}\right)\mn dz_i \wedge d\ov{z}_j$$
gives an explicit solution of the Chern-Ricci flow on $H$ defined on $[0,\frac{1}{n}),$ and as $t$ approaches $\frac{1}{n}$ the metrics $\omega(t)$ converge smoothly to the nonnegative $(1,1)$-form
$$\omega\left(\frac{1}{n}\right)=\frac{\ov{z}_i z_j}{r^4}\mn dz_i \wedge d\ov{z}_j.$$

\begin{theorem}As $t$ approaches $\frac{1}{n}$ we have that $(H,\omega(t))\overset{\emph{GH}}{\to} (S^1,d)$,  where $d$ is the distance function on the circle $S^1\subset\mathbb{R}^2$ with radius $\frac{\log |\alpha_1|}{\sqrt{2}\pi}$.
\end{theorem}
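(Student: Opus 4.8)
The plan is to build, for each $t$, an explicit $1$-Lipschitz surjection of $(H,\omega(t))$ onto the circle whose fibres shrink to points as $t\to\tfrac1n$, and then quote the standard fact that such a map is a Gromov--Hausdorff approximation. Set $r^2=\sum_k|z_k|^2$ and $u=\log r$; since the generator of the deck group multiplies $r$ by $|\alpha_1|$, the function $\sqrt2\,u$ descends to
$$F\colon H\longrightarrow S^1:=\mathbb{R}\big/(\sqrt2\log|\alpha_1|)\mathbb{Z},$$
and $S^1$ with its arc--length metric is exactly the circle of radius $\tfrac{\log|\alpha_1|}{\sqrt2\,\pi}$ in the statement (we may assume $|\alpha_1|>1$, replacing $\alpha$ by $\alpha^{-1}$ otherwise). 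The first thing to check is $|dF|^2_{\omega(t)}\equiv1$ for all $t\in[0,\tfrac1n)$: one has $\partial_iF=\tfrac{\bar z_i}{\sqrt2\,r^2}$, the inverse of $g_{i\bar j}(t)=\tfrac1{r^2}\big((1-nt)\delta_{ij}+nt\tfrac{\bar z_iz_j}{r^2}\big)$ is $g^{i\bar j}(t)=r^2\big(\tfrac1{1-nt}\delta_{ij}-\tfrac{nt}{1-nt}\tfrac{z_i\bar z_j}{r^2}\big)$, and a one--line contraction gives $\sum g^{i\bar j}(t)\bar z_iz_j=r^4$, hence $|dF|^2_{\omega(t)}=2\sum g^{i\bar j}(t)\partial_iF\,\overline{\partial_jF}=1$. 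Since $dF$ never vanishes, $F$ is a Riemannian submersion onto $S^1$ for $t<\tfrac1n$; it is surjective and $1$-Lipschitz, so $d_{S^1}(F(x),F(y))\le d_{\omega(t)}(x,y)$ for all $x,y\in H$.

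The crucial step is to show that the fibres collapse, i.e.
$$\varepsilon(t):=\sup_{p\in S^1}\mathrm{diam}_{\omega(t)}\big(F^{-1}(p)\big)\longrightarrow0\qquad(t\to\tfrac1n).$$
Because $\omega(t)$ is invariant under the scalings $z\mapsto\lambda z$, $\lambda>0$, all fibres are mutually isometric, so it suffices to bound the diameter of $F^{-1}(p)=\{r=1\}\cong S^{2n-1}$. Writing $\omega(t)=(1-nt)\omega_H+nt\,\omega(1/n)$ and using that $\omega(1/n)=\mn\,\partial\log r^2\wedge\db\log r^2$ is a semipositive form of rank one whose kernel is the Hermitian--orthogonal complement of the radial direction, one sees that the metric induced on $S^{2n-1}$ is a fixed metric giving the Hopf (Reeb) circles length $2\pi\sqrt2$, plus $(1-nt)$ times the pullback of a fixed multiple of the Fubini--Study metric along the horizontal (contact) directions. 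Thus $(S^{2n-1},\omega(t)|)$ is a circle bundle over $\big(\mathbb{P}^{n-1},(1-nt)c\,\omega_{\mathrm{FS}}\big)$ with fibre length frozen at $2\pi\sqrt2$ while $\mathrm{diam}(\mathbb{P}^{n-1},(1-nt)c\,\omega_{\mathrm{FS}})=O(\sqrt{1-nt})$. The key point is that this bundle is nontrivial: its restriction to a line $\mathbb{P}^1\subset\mathbb{P}^{n-1}$ is the Hopf bundle $S^3\to S^2$, so the holonomy of a loop in $\mathbb{P}^1$ bounding a region $D$ is rotation by $c\!\int_D\omega_{\mathrm{FS}}$, which runs over the entire fibre circle as $D$ ranges over subregions of $\mathbb{P}^1$, realised by circles of latitude of length $O(\sqrt{1-nt})$. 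Horizontally lifting such loops (and minimising geodesics of the base), one joins any two points of $S^{2n-1}$ by an $\omega(t)$-path of length $O(\sqrt{1-nt})$, so $\varepsilon(t)=O(\sqrt{1-nt})\to0$.

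The two steps combine in the usual way. Given $x,y\in H$ and $t<\tfrac1n$, take a minimising geodesic $\gamma$ in $S^1$ from $F(x)$ to $F(y)$; since $F$ is a Riemannian submersion, $\gamma$ has a horizontal lift starting at $x$ of the same length $d_{S^1}(F(x),F(y))$, ending at some $y'\in F^{-1}(F(y))$, so
$$d_{\omega(t)}(x,y)\le d_{\omega(t)}(x,y')+d_{\omega(t)}(y',y)\le d_{S^1}(F(x),F(y))+\varepsilon(t).$$
Together with the reverse inequality this says $F$ is an $\varepsilon(t)$-isometry; being also surjective it is an $\varepsilon(t)$-Gromov--Hausdorff approximation, whence $d_{GH}\big((H,\omega(t)),(S^1,d)\big)\le2\varepsilon(t)\to0$. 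I expect the fibre--collapsing estimate to be the only genuine obstacle: although the Hopf circles never shorten, the nontrivial holonomy of the Hopf connection over the collapsing $\mathbb{P}^{n-1}$ forces the whole $(2n-1)$-sphere to be Gromov--Hausdorff close to a point. This is precisely the mechanism that has no analogue for the K\"ahler--Ricci flow, where collapsed limits have even dimension.
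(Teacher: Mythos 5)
Your proof is correct, and the overall architecture (Riemannian submersion $F\colon H\to S^1$ with constant unit gradient, plus a fibre-collapse estimate) is the same as the paper's. The genuine difference is in how you prove the fibres collapse. The paper works directly with the rank-$(2n-2)$ distribution $\mathcal{D}=\ker\omega(1/n)$ on $S^{2n-1}$, identifies it with the standard contact structure $\ker(\sum y_jdx_j-x_jdy_j)$, and invokes Carath\'eodory's theorem (Chow--Rashevskii) to assert that $\mathcal D$ is bracket-generating; compactness then bounds the sub-Riemannian diameter, and scaling by $\sqrt{1-nt}$ gives the collapse. You instead exploit the explicit circle-bundle structure $S^1\hookrightarrow S^{2n-1}\to\mathbb P^{n-1}$ together with the fact that the horizontal distribution is the Hopf connection with nontrivial curvature: the base shrinks like $\sqrt{1-nt}$, the fibre length stays fixed, but because the holonomy of loops of base-length $O(\sqrt{1-nt})$ sweeps out the full fibre circle, horizontal lifts of short loops realise arbitrary fibre rotations cheaply. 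This is a special case of the same phenomenon the paper cites abstractly, but your version is self-contained, avoids sub-Riemannian machinery, and makes the quantitative $O(\sqrt{1-nt})$ rate transparent. (For what it is worth, your rate $O(\sqrt{1-nt})$ is the correct one; the paper's claimed $C(1-nt)$ in that step appears to be a slip, since the restriction of $g_t$ to $\mathcal D$ scales lengths by $\sqrt{1-nt}$, not $1-nt$ — harmless either way, since both go to zero.) Your direct verification $|dF|^2_{\omega(t)}\equiv1$ via the explicit inverse metric is also a cleaner way to see that $F$ is a Riemannian submersion than the paper's argument computing $g_t(Z,Z)$ and checking orthogonality to $TS^{2n-1}$, though they amount to the same calculation.
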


This proves part (a) of Theorem \ref{surfaces}, since we can always scale $\omega_0$ by a constant to obtain
as limit the unit circle.

 We recall here that Gromov-Hausdorff convergence of metrics spaces can be defined as follows \cite{Ro}.  The Gromov-Hausdorff distance $d_{\textrm{GH}}((X, d_X),(Y, d_Y))$ between two metrics spaces $(X,d_X)$ and $(Y, d_Y)$ is the infimum of all $\ve>0$  such that there exist $F: X \rightarrow Y$ and $G:Y \rightarrow X$ with
$$| d_X(x_1, x_2) - d_Y(F(x_1), F(x_2))| \le \ve \quad \forall \, x_1, x_2 \in X,$$
and $$d_X(x, G(F(x))) \le \ve \quad \forall \, x\in X,$$
together with the two symmetric properties for $Y$.  Note that $F, G$ are not required to be continuous maps.  If $d_t$ are metrics on $X$, we say that $(X,d_t) \overset{\textrm{GH}}{\to} (Y,d_Y)$ as $t \rightarrow T$ if $d_{\textrm{GH}}( (X, d_t), (Y, d_Y)) \rightarrow 0$ as $t \rightarrow T$.

If furthermore we have finite groups $H$ and $K$ acting isometrically on $(X, d_X)$ and $(Y, d_Y)$ respectively,
we can define the equivariant Gromov-Hausdorff distance between these spaces as the infimum of all $\ve>0$ such that there exist maps $F,G$ as above and maps $\phi:H\to K$ and $\psi:K\to H$ such that in addition we have
$$d_X(F(h\cdot x), \phi(h)\cdot F(x))\leq \ve, \quad d_X(F(\psi(k)\cdot x), k\cdot F(x))\leq \ve,$$
for all $x\in X, h\in H, k\in K$, together with the two symmetric properties for $Y$ (see \cite{Fuk}, \cite[Definition 1.5.2]{Ro}).

\begin{proof}  On $\mathbb{C}^n\setminus \{0 \}$ write $z_i = x_i + \sqrt{-1} y_i$.
A short calculation shows that the $(1,1)$ form $\omega(1/n)$ defines a nonnegative symmetric tensor $h$ on $\mathbb{C}^n \setminus \{0 \}$ by
$$h(X, Y) = \frac{2}{r^4} \left( (Z \cdot X)(Z \cdot Y) + (JZ \cdot X)(JZ \cdot Y)\right),$$
for $$Z= \sum_i (x_i \partial_{x_i} + y_i \partial_{y_i}), \quad JZ= \sum_i (x_i \partial_{y_i} - y_i \partial_{x_i}),$$
where $J$ is the standard complex structure on $\mathbb{C}^n$.
Define a real distribution $\mathcal{D}$ on  $\mathbb{C}^n \setminus \{0\}$ by
$$\mathcal{D} = \{ X  \ | \ h(X, Y) =0 \quad \forall \, Y\}.$$
Note that the condition $X \in \mathcal{D}$ is equivalent to $Z \cdot X=0=JZ\cdot X$.  In particular, since $Z$ is the radial vector field on $\mathbb{C}^n \setminus \{ 0 \}$, the distribution $\mathcal{D}$ is tangent to every sphere $S^{2n-1}$ centered at the origin.

Fix a sphere $S^{2n-1} \subset \mathbb{C}^n\setminus \{0 \}$.  Denote by $\mathcal{D}_{S}$ the distribution $\mathcal{D}$ restricted to this sphere.  We claim that  $\mathcal{D}_{S}$ is bracket generating, namely that $\mathcal{D}_{S}$ together with its iterated Lie brackets generate the tangent space to $TS^{2n-1}$.  To see this,  note that $\mathcal{D}_S$ is given by vectors $X \in TS^{2n-1}$ with $JZ \cdot X =0$ and hence  we may write
$$\mathcal{D}_S = \{ X \in TS^{2n-1} \ | \ \alpha(X)=0 \},$$
for $\alpha = \sum y_j dx_j - x_j dy_j$.  But this is the well-known standard contact structure on $S^{2n-1}$.  Indeed the reader can verify that $\alpha \wedge (d\alpha)^{n-1}$ is a nowhere vanishing volume form on $S^{2n-1}$.  A theorem of Carath\'eodory (see for example \cite{Mo}) shows then that $\mathcal{D}_S$ is bracket generating.  Note also that since $\mathcal{D}_S$ is orthogonal to $JZ \in TS^{2n-1}$, it follows that $TS^{2n-1}$ is spanned by the distribution $\mathcal{D}_S$ together with the vector field $JZ$.

Calculate $g_t(Z,Z) = 2$ and $$g_t(X, Y) = 2(1-nt) (X \cdot Y) /r^2, \quad \textrm{for } X \in \mathcal{D}.$$
In particular, $g_t(X, Z)=0$ for $X \in \mathcal{D}$.  It follows that $Z$ is $g_t$-orthogonal to $TS^{2n-1}$ since $g_t(Z,JZ)=0$ by the Hermitian property of $g_t$, and $\mathcal{D}_S$, $JZ$ span the tangent space to $S^{2n-1}$.

Consider the map $F:H\to S^1$ which maps the equivalence class of $(z_1, \ldots, z_n)$ in $H$ to the equivalence class of $r=\sqrt{\sum_i|z_i|^2}$ in $\mathbb{R}^+ /(r\sim |\alpha_1|r)\cong S^1$, and also the map $G:S^1\to H\cong S^1\times S^{2n-1}$ which maps a point $x$ to $(x,y)$ for some fixed element $y\in S^{2n-1}$ (identified with the unit sphere in $\mathbb{C}^n$). Note that the diffeomorphism $H\cong S^1\times S^{2n-1}$ can be realized explicitly by sending a point $\mathbf{z}=(z_1, \ldots, z_n)$ to $\left(r,\frac{\mathbf{z}}{r}\right)$. We clearly have that $F\circ G=\mathrm{Id}.$

\begin{remark}
The metric $\omega_H$ coincides (up to a universal constant factor) with the pull-back of the standard product metric on $S^1 \times S^{2n-1}$ via the isomorphism $H \cong S^1 \times S^{2n-1}$ described above.
\end{remark}

On the circle $S^1$ put the metric $2(d\log r)^2=2\frac{(dr)^2}{r^2}$, and denote by $d$ its distance function.
It is isometric to the standard metric on $S^1\subset \mathbb{R}^2$ with total length $\sqrt{2}\int_1^{|\alpha_1|}\frac{dr}{r}=\sqrt{2}\log|\alpha_1|$, and therefore radius $\frac{\log |\alpha_1|}{\sqrt{2}\pi}$.
Now the kernel of $F_*:TH\to TS^1$ is $TS^{2n-1}$, and the $g_t$-orthogonal complement of $\ker F_*$ is spanned by the radial vector field $Z= r \frac{\partial}{\partial r}$. But we also have that $$g_t(Z,Z)=2=
g_t\left(r\frac{\de}{\de r},r\frac{\de}{\de r}\right)= F^*(2(d\log r)^2)\left(r\frac{\de}{\de r},r\frac{\de}{\de r}\right).$$
Hence  $F:(H,g_t)\to (S^1,2(d\log r)^2)$ is a Riemannian submersion, i.e.  $F_*$ is an isometry when restricted to the $g_t$-orthogonal complement of $\ker F_*$. Since every Riemannian submersion is distance-nonincreasing,
\begin{equation}\label{one}
d(F(x),F(y))\leq d_t(x,y),
\end{equation}
for all $x,y\in H$ and all $0\leq t<\frac{1}{n}$, where $d_t$ denotes the distance function on $H$ induced by $g_t$.
This also shows that
\begin{equation}\label{two}
d(p,q)\leq d_t(G(p),G(q)),
\end{equation}
for all $p,q\in S^1$ and all $0\leq t<\frac{1}{n}$, because $F\circ G=\mathrm{Id}.$
Furthermore, it is clear that
\begin{equation}\label{three}
d_t(G(p),G(q))\leq d(p,q),
\end{equation}
since we can connect the points $G(p)$ and $G(q)$ by the radial path whose $g_t$-length equals $d(p,q)$.

Finally, given two points $x,y\in H$, choose representatives $x,y\in\mathbb{C}^n\setminus\{0\}$ with $1\leq |x|,|y|\leq |\alpha_1|$, and call $S^{2n-1}_\rho$ the sphere with center the origin and radius $\rho=|y|$. Call $z$ the radial projection of $x$ onto $S^{2n-1}_\rho$.
Since $\mathcal{D}_S$ is bracket-generating, Carath\'eodory's theorem \cite{Mo} implies that
 we can join the points $y$ and $z$ in $S^{2n-1}_{\rho}$ by a path in $S^{2n-1}_\rho$ with tangent vectors in $\mathcal{D}_S$. Furthermore, the length of this path with respect to the Euclidean metric $\delta_{ij}$ can be bounded by a constant $C$ independent of $x,y$ (this is because the sub-Riemannian distance induced on $S^{2n-1}$ by $\mathcal{D}$ and $\delta_{ij}$ has finite diameter since $S^{2n-1}_\rho$ is compact, see Theorem 2.3 in \cite{Mo}). But the restriction of $g_t$ on $\mathcal{D}$ equals $2\frac{(1-nt)}{|y|^2}\delta_{ij}$ and so the $g_t$-length of this path is bounded above by $C(1-nt)$.
We then join $x$ to $z$ by the radial path whose $g_t$-length equals $d(F(x),F(y))$, by the previous discussion.
Altogether we get
\begin{equation}\label{four}
d_t(x,y)\leq d_t(x,z)+d_t(z,y)\leq d(F(x),F(y))+C(1-nt).
\end{equation}

We obviously have
\begin{equation}\label{five}
d(p,F(G(p)))=0,
\end{equation}
for every $p\in S^1$. Finally pick a point $x\in H$ and consider the point $G(F(x))$. They lie in the same
fiber $S^{2n-1}$ of $F$, and so we can connect them with a path on $S^{2n-1}$ tangent to $\mathcal{D}$ which has $g_t$-length less than $C(1-nt)$, as before. Therefore,
\begin{equation}\label{six}
d_t(x,G(F(x)))\leq C(1-nt).
\end{equation}

Combining \eqref{one}, \eqref{two}, \eqref{three}, \eqref{four}, \eqref{five} and \eqref{six} gives the desired Gromov-Hausdorff convergence.
\end{proof}

\begin{remark}
One can also realize these Hopf manifolds and the metric $\omega_H$ as a special case of a construction of Calabi-Eckmann \cite{CE, BV, Ti}.
Indeed consider the holomorphic $\mathbb{C}$-action on $(\mathbb{C}^p\backslash \{0\})\times (\mathbb{C}^q\backslash \{0\})$ given by
$$t\cdot(z,w)=(e^t z_1,\dots e^t z_p, e^{\beta_1 t}w_1,\dots,e^{\beta_q t}w_q),$$
where $t\in\mathbb{C}$, $z\in \mathbb{C}^p\backslash \{0\}, w\in \mathbb{C}^q\backslash \{0\}$ ($0<p\leq q$), and $\beta_1,\dots,\beta_q$ are complex numbers with $\mathrm{Im}\beta_1=\dots=\mathrm{Im}\beta_q\neq 0$.
The quotient $M_{p,q}$ is a complex manifold diffeomorphic to $S^{2p-1}\times S^{2q-1}$.

In the special case when $\beta_1=\dots=\beta_q$ we recover exactly the Calabi-Eckmann manifolds, which
are elliptic bundles over $\mathbb{CP}^{p-1}\times\mathbb{CP}^{q-1}$.

If $p=1$ we have that $M_{1,q}$ is biholomorphic to the Hopf manifold $H=(\mathbb{C}^q\backslash\{0\})/\sim$
where
$(w_1,\dots,w_q)\sim (\alpha_1w_1,\dots,\alpha_qw_q),$
and $\alpha_j=e^{2\pi \mn \beta_j}$ (note that $|e^{2\pi \mn \beta_1}|=\dots=|e^{2\pi \mn \beta_q}|\neq 1$). Indeed every point $(z,w)\in \mathbb{C}^*\times (\mathbb{C}^q\backslash \{0\})$ is in the same $\mathbb{C}$-orbit as $(1, w')$ (just pick $t=-\log z$, for any branch of the complex log), which we can view as a point in $\mathbb{C}^q\backslash \{0\}$. Now note that $t\cdot(1,w')=(1,w'')$ iff $t\in 2\pi \mn \mathbb{Z}$, iff
$w''_j=w'_j e^{2\pi \mn \beta_j \ell}$ for some $\ell\in\mathbb{Z}$ and for all $1\leq j\leq q$. Therefore we have constructed a holomorphic bijection from $M_{1,q}$ to the Hopf manifold $H$ . The inverse biholomorphism  $\Psi:H\to M_{1,q}$ is simply induced
by the map $w\mapsto (1,w)$.

There is a natural Hermitian metric on $M_{p,q}$ given by
$$\omega_0=\sum_{i,j=1}^p \frac{\delta_{ij}}{|z|^2}\mn dz_i\wedge d\ov{z}_j+
\sum_{k,\ell=1}^q\frac{\delta_{k\ell}}{|w|^2}\mn dw_k\wedge d\ov{w}_\ell,$$
and in the case when $p=1$ we have $\Psi^*\omega_0=\omega_H$.

\end{remark}

\section{The Inoue surfaces $S_M$}\label{sectinoue1}
Inoue surfaces were discovered in \cite{In}, and can be characterized as surfaces of class VII with second Betti number zero and with no holomorphic curves \cite{Bog, LYZ, T0}. They form three families, $S_M, S^{+}_{N,p,q,r;{\bf t}}$ and $S^{-}_{N,p,q,r},$ which we will treat separately in the following three sections.

From \cite[Theorem 1.5]{TW3} we know that on any Inoue surface the Chern-Ricci flow starting at any Gauduchon metric $\omega_0$ has a solution $\omega(t)$ for all $t\geq 0$, with volume that grows linearly in $t$. We will consider explicit metrics $\omega_0$ and determine the Gromov-Hausdorff limit of the rescaled metrics $\frac{\omega(t)}{t}$.

In this section we study the Inoue surfaces $S_M$, whose construction from \cite{In} we now recall.
Let $H=\{z\in\mathbb{C}\ |\ \mathrm{Im}z>0\}$ be the upper half plane, and consider the product $H\times\mathbb{C}$.
Let $M\in SL(3,\mathbb{Z})$ be a matrix with one real eigenvalue $\alpha>1$ and two complex conjugate eigenvalues $\beta\neq \ov{\beta}$ (so that $\alpha|\beta|^2=1$). The real number $\alpha$ is necessarily irrational. Let $(a_1,a_2,a_3)$ be a real eigenvector for $M$ with eigenvalue $\alpha$ and $(b_1,b_2,b_3)$ be an eigenvector with eigenvalue $\beta$. Note that since $(a_1,a_2,a_3), (b_1,b_2,b_3)$ and $(\ov{b_1},\ov{b_2},\ov{b_3})$ are
$\mathbb{C}$-linearly independent, it follows that $(a_1, \mathrm{Re} b_1, \mathrm{Im} b_1), (a_2, \mathrm{Re} b_2, \mathrm{Im} b_2)$ and $(a_3, \mathrm{Re} b_3, \mathrm{Im} b_3)$ are $\mathbb{R}$-linearly independent.
Let $\Gamma$ be the group of automorphisms of $H\times\mathbb{C}$ generated by
$$f_0(z,w)=(\alpha z,\beta w),$$
$$f_j(z,w)=(z+a_j,w+b_j), 1\leq j\leq 3.$$
Then $S_M=(H\times\mathbb{C})/\Gamma$ is an Inoue surface. Consider the Tricerri metric \cite{Tr}
$$\omega_0=\frac{1}{y^2}\mn dz\wedge d\ov{z}+y \mn dw\wedge d\ov{w},$$
where $z=x+\mn y$. It is easy to check that $\omega_0$ is $\Gamma$-invariant and descends to a Hermitian metric on $S_M$
which is Gauduchon (because $y$ is a harmonic function).
Let now
$\omega(t)=\omega_0-t\Ric(\omega_0).$
We calculate
$$\Ric(\omega_0)=-\mn\de\db\log\det((g_0)_{i\ov{j}})=\mn\de\db\log y=-\frac{1}{4y^2}\mn dz\wedge d\ov{z},$$
and so
\begin{equation}\label{evolve}
\omega(t)=\left(1+\frac{t}{4}\right)\frac{1}{y^2}\mn dz\wedge d\ov{z}+y \mn dw\wedge d\ov{w},
\end{equation}
which is a Gauduchon metric for all $t>0$. It satisfies the Chern-Ricci flow, because
$$\det(g_{i\ov{j}}(t))=\left(1+\frac{t}{4}\right)\frac{1}{y}=\left(1+\frac{t}{4}\right)\det((g_0)_{i\ov{j}}),$$
and so $\Ric(\omega(t))=\Ric(\omega_0)=-\frac{\de}{\de t}\omega(t).$

If we renormalize the metrics by dividing by $t$ and we let $t$ go to infinity we get
$$\frac{\omega(t)}{t}\to \omega_\infty=\frac{1}{4y^2}\mn dz\wedge d\ov{z},$$
smoothly on $H\times\mathbb{C}$ (and on $S_M$). The limit degenerate metric $\omega_\infty$ is simply the pullback of one half of the Poincar\'e metric from $H$, $\omega_{\mathrm{KE}}=\frac{dx^2+dy^2}{y^2}$, so in particular it is closed (unlike in the case
of the Hopf surface). This degenerate metric has appeared for the first time in this context in \cite{HL}.

\begin{theorem}\label{inoue}
As $t$ approaches $+\infty$ we have that $\left(S_M, \frac{\omega(t)}{t}\right)\overset{GH}{\to} (S^1,d)$, where $d$ is the distance function on the circle $S^1\subset\mathbb{R}^2$ with radius $\frac{\log \alpha}{2\sqrt{2}\pi}$.
\end{theorem}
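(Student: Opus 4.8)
The plan is to build an explicit collapsing map $F\colon S_M\to S^1$ together with an almost-section $G\colon S^1\to S_M$, to check directly the four inequalities in the definition of Gromov--Hausdorff distance recalled in Section~\ref{secthopf}, and to conclude by letting $t\to\infty$. Throughout, $g_t$ denotes the Riemannian metric underlying $\frac{\omega(t)}{t}$ and $d_t$ its distance function.

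Since $a_1,a_2,a_3$ are real, the function $\log y=\log\mathrm{Im}\,z$ is invariant under $f_1,f_2,f_3$ and changes by $\log\alpha$ under $f_0$; hence it descends to a smooth submersion $F\colon S_M\to\mathbb{R}/(\log\alpha)\mathbb{Z}$, and I equip the target with the metric $\tfrac12(d\log y)^2$, which is the round circle of radius $\tfrac{\log\alpha}{2\sqrt2\,\pi}$. From \eqref{evolve}, in real coordinates $(x,y,u,v)$ with $w=u+\mn v$, one computes that $g_t=2\left(\tfrac1t+\tfrac14\right)\tfrac{dx^2+dy^2}{y^2}+\tfrac{2y}{t}(du^2+dv^2)$; in particular the coefficient of $\tfrac{dy^2}{y^2}$ is $\ge\tfrac12$, so $F$ is distance-nonincreasing from $(S_M,g_t)$ onto $(S^1,d)$, giving $d(F(p),F(q))\le d_t(p,q)$ for all $p,q$. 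For the almost-section I would take $G(\bar s)=[(\mn e^{s},0)]$; this is well defined on $\mathbb{R}/(\log\alpha)\mathbb{Z}$ since $f_0(\mn e^{s},0)=(\mn e^{s+\log\alpha},0)$, it satisfies $F\circ G=\mathrm{id}$, and running the path $\tau\mapsto[(\mn e^{s(\tau)},0)]$ along the shorter arc shows $d_t(G\bar s_1,G\bar s_2)\le\sqrt{1+4/t}\;d(\bar s_1,\bar s_2)$, so together with the previous inequality $d_t(G\bar s_1,G\bar s_2)=d(\bar s_1,\bar s_2)+o(1)$ as $t\to\infty$.

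The main point — and the only step that is not pure bookkeeping — is that the fibers of $F$ collapse uniformly. A fiber $F^{-1}(\bar s)$ is the quotient of the slice $\{\mathrm{Im}\,z=e^{s}\}\cong\mathbb{R}^3$, with coordinates $(x,u,v)$, by the translations $f_1,f_2,f_3$; by the $\mathbb{R}$-linear independence of the triples $(a_j,\mathrm{Re}\,b_j,\mathrm{Im}\,b_j)$ noted in the construction, this is the flat torus $\mathbb{R}^3/L$, $L=\bigoplus_j\mathbb{Z}(a_j,\mathrm{Re}\,b_j,\mathrm{Im}\,b_j)$, carrying the metric $2\left(\tfrac1t+\tfrac14\right)e^{-2s}dx^2+\tfrac{2e^{s}}{t}(du^2+dv^2)$: bounded in the $x$-direction but of order $1/t$ in the $w$-direction. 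The key observation is that the expensive $x$-direction can be traversed almost for free using the group action, because $\pi_x(L)=a_1\mathbb{Z}+a_2\mathbb{Z}+a_3\mathbb{Z}$ is dense in $\mathbb{R}$: otherwise it would be a cyclic group $\lambda\mathbb{Z}$, and then $\alpha a_j=\sum_k M_{jk}a_k\in\lambda\mathbb{Z}$ with $M$ integral would force $\alpha\in\mathbb{Q}$, contradicting the irrationality of $\alpha$. Given $\delta>0$, density together with compactness of a fundamental domain $D$ of $L$ produces finitely many $\ell^{(1)},\dots,\ell^{(N)}\in L$ whose $x$-coordinates are $\delta$-dense in a compact interval containing $\pi_x(D)$; let $C$ be the largest $w$-length among them, a constant independent of $t$ and $s$. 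Two points of a fiber, represented by lifts differing by $(\Delta x,\Delta w)\in D$, become, after translating one lift by a suitable $\ell^{(i)}$ with $|x(\ell^{(i)})-\Delta x|<\delta$, lifts differing by $<\delta$ in $x$ and by $\le C+\mathrm{diam}(D)$ in $w$; the straight segment between them (which descends to $S_M$) then has $g_t$-length at most $\sqrt{2\left(\tfrac1t+\tfrac14\right)}\,\delta+\sqrt{2\alpha/t}\,(C+\mathrm{diam}(D))$, uniformly in $s\in[0,\log\alpha)$ since $1\le e^{s}<\alpha$ there. Letting $t\to\infty$ and then $\delta\to0$ gives $\sup_{s}\mathrm{diam}_{g_t}(F^{-1}(\bar s))\to0$; in particular $d_t(p,G(F(p)))\to0$ uniformly in $p\in S_M$.

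Finally I would assemble the pieces: combining the fiber bound with the triangle inequality upgrades $d(F(p),F(q))\le d_t(p,q)$ to $|d_t(p,q)-d(F(p),F(q))|\to0$ uniformly, and together with $d_t(G\bar s_1,G\bar s_2)=d(\bar s_1,\bar s_2)+o(1)$, $d_t(p,G(F(p)))\to0$, and $d(\bar s,F(G\bar s))=0$, these are exactly the four estimates needed to bound the Gromov--Hausdorff distance between $(S_M,g_t)$ and $(S^1,d)$ by a quantity tending to $0$. Hence $\left(S_M,\tfrac{\omega(t)}{t}\right)\overset{GH}{\to}(S^1,d)$. I expect the uniform fiber collapse — specifically, handling the non-shrinking $x$-direction via the dense translational action of $\langle f_1,f_2,f_3\rangle$ — to be the only genuinely non-formal ingredient; the rest parallels, and is simpler than, the Hopf computation of Section~\ref{secthopf}.
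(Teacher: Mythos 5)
Your proposal is correct and follows the same overall architecture as the paper's proof: the explicit computation of the evolved metric, the interpretation of $S_M$ as a $T^3$-bundle over $S^1$, the distance-nonincreasing projection $F$ onto $S^1$ with the rescaled Poincar\'e metric $\tfrac12(d\log y)^2$, a section $G$ whose image consists of radial-type paths realizing the circle distance up to $O(1/\sqrt t)$, and a uniform collapse of the $T^3$ fibers as the only substantive estimate. The one genuine divergence is in the proof of the key density ingredient. The paper establishes that the leaf $P(\mathcal{L}_{z_0})$ of the foliation $\mathcal{D}=\mathrm{Span}_\mathbb{C}(\partial/\partial w)$ is dense in each torus fiber (Lemma~\ref{dense}) by invoking Inoue's classification result that $S_M$ carries no complex curves: a non-dense leaf would be closed, hence a curve. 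You instead argue directly with the lattice $L=\bigoplus_j\mathbb{Z}(a_j,\mathrm{Re}\,b_j,\mathrm{Im}\,b_j)$ and show that its $x$-projection $a_1\mathbb{Z}+a_2\mathbb{Z}+a_3\mathbb{Z}$ is dense in $\mathbb{R}$, since discreteness (cyclicity) would force $\alpha$ to be rational via the eigenvalue relations $\alpha a_j=\sum_k m_{jk}a_k$. These two facts are equivalent (the $(u,v)$-plane has dense image in $\mathbb{R}^3/L$ precisely when $\pi_x(L)$ is dense), and the paper itself notes that a direct proof along these lines exists, referencing \cite[Prop.\ V.19.1]{bhpv}. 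Your route trades a nontrivial classification theorem for a one-line irrationality argument and is therefore more elementary and self-contained; it also makes the uniformity of the fiber-diameter bound over $s\in[0,\log\alpha)$ completely explicit via the finite $\delta$-net $\{\ell^{(1)},\dots,\ell^{(N)}\}$ and the bound $1\le e^s<\alpha$, whereas the paper leaves this uniformity largely implicit. The use of a genuine continuous section $G(\bar s)=[(\mn e^s,0)]$ rather than the paper's discontinuous one is a minor cosmetic simplification.
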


To calculate the Gromov-Hausdorff limit of $\left(S_M, \frac{\omega(t)}{t}\right)$ we need to understand the topology of $S_M$ a bit better. The key observation, due to Inoue, is that $S_M$ is a $T^3$-bundle over $S^1$. Indeed, if we consider the subgroup $\Gamma'\subset\Gamma$ generated by $f_1, f_2, f_3$, then $\Gamma'$ is isomorphic to $\mathbb{Z}^3$ (because of the linear independence property mentioned above) and it acts properly discontinuously and freely on $H\times\mathbb{C}$, with quotient the product $X=T^3\times \mathbb{R}^+$ (since the numbers $a_j$ are real). The projection
$\pi:X\to\mathbb{R}^+$ is induced by $(z,w)\mapsto \mathrm{Im}z$.

Since $\alpha a_j=\sum_k m_{jk} a_k$ and $\beta b_j=\sum_k m_{jk} b_k$ where $M=(m_{jk})$ and $m_{jk}\in\mathbb{Z}$,
we see that $f_0$ descends to a map $X\to X$.
We have that $S_M=X/\langle f_0\rangle$, and since $\alpha\in\mathbb{R}$ we see that $f_0$ maps the torus fiber $T_y=\pi^{-1}(y)$ to the torus fiber $T_{\alpha y}=\pi^{-1}(\alpha y)$. In particular, $f_0$ induces a diffeomorphism of the $3$-torus
$\psi:T_1\to T_{\alpha}$ and we have that $S_M$ is diffeomorphic to the quotient space $([1,\alpha]\times T^3)/\sim,$
where $(1,p)\sim (\alpha, \psi(p)),$ which is a $T^3$-bundle over $S^1$ (recall that $\alpha>1$). We will still call
$\pi:S_M\to S^1$ the projection map.

The kernel of $\omega_\infty$ on $H\times\mathbb{C}$ is the integrable distribution $\mathcal{D}=\mathrm{Span}_{\mathbb{C}}\left(\frac{\de}{\de w}\right)$, whose leaves are of the
form $\mathcal{L}_{z_0}=\{(z_0,w)\ |\ w\in\mathbb{C}\}\subset H\times\mathbb{C}.$ We wish to determine the images
of these leaves when projected to $S_M$.

\begin{lemma}\label{dense} If we call $P:H\times\mathbb{C}\to S_M$ the projection, then for any $z_0\in H$ the image
$P(\mathcal{L}_{z_0})$ is dense inside the $T^3$ fiber $T_0=\pi^{-1}(\mathrm{Im}z_0)\subset S_M$.
\end{lemma}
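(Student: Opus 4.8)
The plan is to reduce the assertion to a classical equidistribution statement on the three-torus, exploiting the structure of $S_M$ as a $T^3$-bundle over $S^1$ described above. Write $p\colon H\times\mathbb{C}\to X$ and $q\colon X\to S_M=X/\langle f_0\rangle$ for the two quotient maps, so $P=q\circ p$. Recall $X=T^3\times\mathbb{R}^+$, where the fiber $T_y=\pi^{-1}(y)$ over $y>0$ is the quotient of $\{(x+\mn y,w):x\in\mathbb{R},\,w\in\mathbb{C}\}\cong\mathbb{R}^3$, with coordinates $(x,u,v)$, $w=u+\mn v$, by the lattice $\Lambda=\mathbb{Z}v_1+\mathbb{Z}v_2+\mathbb{Z}v_3$, $v_j=(a_j,\mathrm{Re}\,b_j,\mathrm{Im}\,b_j)$; this is a rank-three lattice by the $\mathbb{R}$-linear independence noted above. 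The first step is to observe that $q$ restricts to a diffeomorphism $T_{y_0}\to T_0$, where $y_0=\mathrm{Im}\,z_0$: since $f_0^k$ maps $T_{y_0}$ onto $T_{\alpha^k y_0}$, which is disjoint from $T_{y_0}$ when $k\neq0$ because $\alpha>1$, the map $q|_{T_{y_0}}$ is injective, and it is plainly a surjective local diffeomorphism onto $\pi^{-1}(\mathrm{Im}\,z_0)\subset S_M$. Since $\mathcal{L}_{z_0}$ maps under $p$ into the single fiber $T_{y_0}$, it suffices to prove that $p(\mathcal{L}_{z_0})$ is dense in $T_{y_0}$.

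Next I would identify $p(\mathcal{L}_{z_0})$ with the image in $T_{y_0}=\mathbb{R}^3/\Lambda$ of the affine plane $A=\{x_0\}\times\mathbb{R}^2$, where $x_0=\mathrm{Re}\,z_0$ (the $w$-plane lying over $z=z_0$). Translating by the torus element represented by $(-x_0,0,0)$, which is a homeomorphism of $\mathbb{R}^3/\Lambda$, reduces the claim to density of the image of the linear subspace $W=\{0\}\times\mathbb{R}^2=\mathrm{span}\{e_2,e_3\}$. By the standard criterion (Pontryagin duality / Kronecker), the image of the subgroup $W$ is dense in $\mathbb{R}^3/\Lambda$ if and only if every character of $\mathbb{R}^3/\Lambda$ vanishing on $W$ is trivial. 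A character of $\mathbb{R}^3/\Lambda$ has the form $x\mapsto\ell(x)\bmod\mathbb{Z}$ for a linear functional $\ell$ on $\mathbb{R}^3$ with $\ell(\Lambda)\subset\mathbb{Z}$; vanishing on $W$ forces $\ell(x,u,v)=cx$ for some $c\in\mathbb{R}$, and then $\ell(\Lambda)\subset\mathbb{Z}$ becomes $c\,a_j\in\mathbb{Z}$ for $j=1,2,3$.

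Finally I would rule out $c\neq0$. If $c\neq0$ then $k:=(ca_1,ca_2,ca_3)$ is a nonzero element of $\mathbb{Z}^3$ — nonzero because $(a_1,a_2,a_3)$ is an eigenvector of $M$ and hence not the zero vector — and $k$ is a scalar multiple of that eigenvector, so $Mk=\alpha k$. But $M$ has integer entries, so $\alpha k=Mk\in\mathbb{Z}^3$; picking a coordinate $k_i\neq0$ gives $\alpha=(Mk)_i/k_i\in\mathbb{Q}$, contradicting the irrationality of $\alpha$. Therefore $c=0$, $\ell=0$, the image of $W$ is dense, and consequently $P(\mathcal{L}_{z_0})$ is dense in $T_0$.

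I do not anticipate a genuine obstacle here: the argument is essentially bookkeeping plus one arithmetic input. The two points needing care are checking that the covering map $q$ identifies the fiber $T_{y_0}$ of $X$ with the fiber $T_0$ of $S_M$ without collapsing it (which is exactly where $\alpha>1$ enters), and setting up the density criterion on $\mathbb{R}^3/\Lambda$ correctly; the arithmetic heart is simply that $\alpha$, being an eigenvalue of an integer matrix with a non-rational eigenvector direction, is irrational.
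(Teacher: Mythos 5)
Your proof is correct, and it takes a genuinely different route from the paper's primary argument. The paper observes that $P(\mathcal{L}_{z_0})$ is a leaf of the foliation $\mathcal{D}$ on $S_M$; if this leaf were closed it would be a compact complex curve in $S_M$, but Inoue proved $S_M$ contains no curves, so the leaf is not closed; and since the closure of a leaf of a linear foliation on a torus is a subtorus, a non-closed $2$-dimensional leaf in $T^3$ must have closure equal to all of $T^3$. Your approach is the ``alternative direct proof along the lines of \cite[Proposition V.19.1]{bhpv}'' that the paper mentions in passing but does not carry out: you pass to the universal picture on $\mathbb{R}^3/\Lambda$, apply the character criterion for density of a connected subgroup, and reduce to showing that no nonzero $c\in\mathbb{R}$ satisfies $ca_j\in\mathbb{Z}$ for $j=1,2,3$, which follows since such a $c$ would produce a nonzero integer eigenvector of $M$ with eigenvalue $\alpha$, forcing $\alpha\in\mathbb{Q}$ and contradicting its irrationality. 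The trade-off is clear: the paper's proof is shorter but leans on Inoue's nontrivial classification result that $S_M$ has no curves, whereas yours is self-contained and elementary, requiring only linear algebra and the known irrationality of $\alpha$; yours also makes explicit the identification $T_{y_0}\cong T_0$ via $q$, which the paper glosses over. Both are valid, and each has pedagogical value; the direct argument is arguably preferable if one wants a proof independent of the structure theory of Inoue surfaces.
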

\begin{proof}
It is clear that $P(\mathcal{L}_{z_0})\subset T_0$. Obviously $P(\mathcal{L}_{z_0})$ is just a leaf of the foliation $\mathcal{D}$ on $S_M$. If
$P(\mathcal{L}_{z_0})$ were closed in $S_M$ then it would be a complex curve in $S_M$, but Inoue \cite{In} has shown that there are no such curves.
Therefore $P(\mathcal{L}_{z_0})$ cannot be closed, and since it is contained in the $3$-torus $T_0$, it must be dense in $T_0$ (the closure of any leaf of a linear foliation of a torus is always a torus itself).
\end{proof}
An alternative direct proof of this lemma can be given along the lines of \cite[Proposition V.19.1]{bhpv}.

\begin{proof}[Proof of Theorem \ref{inoue}]
For $t>0$ call $d_t$ the distance function on $S_M$ induced by the metric $\frac{\omega(t)}{t}$, and
let $L_t(\gamma)$ denote the length of a curve $\gamma$ with respect to $\frac{\omega(t)}{t}$.
Similarly, $d_0$ and $L_0(\gamma)$ are defined using $\omega_0$, and we will denote by
$L_\infty(\gamma)$ the length of $\gamma$ computed using the degenerate metric $\omega_\infty$.

On the circle $S^1$ we put the metric $\frac{1}{2}(d\log r)^2=\frac{(dr)^2}{2r^2}$, and denote by $d$ its distance function.
It is isometric to the standard metric on $S^1\subset \mathbb{R}^2$ with radius $\frac{\log \alpha}{2\sqrt{2}\pi}$.

For any $\ve>0$ fixed, we will show that for $t$ sufficiently large the Gromov-Hausdorff distance between
$(S_M, d_t)$ and $(S^1,d)$ is less than $3\ve$.

We regard $S^1$ as $[1,\alpha]/(1\sim \alpha)$ and $S_M$ as $([1,\alpha]\times T^3)/\sim,$
where $(1,p)\sim (\alpha, \psi(p)),$ as before.
Call $F:S_M\to S^1$ the projection of the $T^3$-bundle, and let
$G:S^1\to S_M$ be the discontinuous map induced by the map $\phi:[1,\alpha]\to [1,\alpha]\times T^3$
given by $\phi(x)=(x,p_0)$ for for $x\in[1,\alpha)$ and $\phi(\alpha)=(1,p_0)$, where $p_0\in T^3$
is a fixed basepoint.

Clearly we have $F\circ G=\mathrm{Id}$, while $G\circ F$ is a fiber-preserving discontinuous map of $S_M$.
In particular for any $a\in S^1$ we have trivially
\begin{equation}\label{one1}
d(a,F(G(a)))=0.
\end{equation}
First of all observe that from \eqref{evolve} there is a constant $C_0$ so that
for all $t\geq 1$ and for any curve $\gamma$ in $S_M$ we have
$L_t(\gamma)\leq C_0 L_0(\gamma)$.

Second, recall that $\mathcal{D}=\ker\omega_\infty,$ so from \eqref{evolve} again we
see that if $\gamma$ is a curve in $S_M$ with tangent vector
always in $\mathcal{D}$, then $L_t(\gamma)\leq \frac{C_0}{\sqrt{t}} L_0(\gamma)$.

Third, let $p,q$ be any two points in $S_M$ on a same $T^3$-fiber, i.e. $F(p)=F(q)$, and
pick any $(z_0,w_0)\in H\times \mathbb{C}$ such that $P(z_0,w_0)=p$, so that the image of
the leaf $P(\mathcal{L}_{z_0})$ passes through $p$.
Thanks to Lemma \ref{dense} we know that $P(\mathcal{L}_{z_0})$ is dense in the $T^3$-fiber,
and so there is a connected compact set $K\subset \mathcal{L}_{z_0}$ such that every point in this $T^3$-fiber
has $d_0$-distance less than $\frac{\ve}{2C_0}$ to $P(K)$. On the other hand, every point in $P(K)$ can be
joined to $p$ with a curve $\gamma$ in the $T^3$-fiber with tangent vector in $\mathcal{D}$.
Therefore, for any such $\gamma$ we have $L_t(\gamma)\leq\frac{C}{\sqrt{t}}$,
with a uniform constant $C$ independent of $\gamma$ (it depends only on $K$).

It follows that there is a point in $P(K)$ which can be joined to $p$ by a curve $\gamma_1$ tangent to $\mathcal{D}$
and to $q$ by a curve $\gamma_2$ with $L_0(\gamma_2)\leq \frac{\ve}{2C_0}$.
Concatenating $\gamma_1$ and $\gamma_2$ we see that
\begin{equation}\label{fiber}
d_t(p,q)\leq L_t(\gamma_1)+L_t(\gamma_2)
\leq\frac{C}{\sqrt{t}} + C_0 L_0(\gamma_2)
\leq \frac{C}{\sqrt{t}} + \frac{\ve}{2}\leq \ve,
\end{equation}
if $t$ is large enough.

Let now $p,q$ be any two points in $S_M$, with $F(p)=a, F(q)=b$ where $1\leq a,b<\alpha$, and we can assume that
$a\leq b$.
Then $p$ and the point $(a,p_0)$ (rather its equivalence class in $S_M$) belong to the same $T^3$ fiber,
as do $q$ and $(b,p_0)$, so from \eqref{fiber} we get
\begin{equation}\label{fiber2}
d_t(p, (a,p_0))\leq \ve,\quad d_t(q, (b,p_0))\leq \ve.
\end{equation}
We then join $(a,p_0)$ to $(b,p_0)$ via the image in $S_M$ of the curve
$\gamma(s)=(s,p_0)$, $a\leq s\leq b$. The point $\gamma(s)$ has a lift to $H\times\mathbb{C}$
with imaginary part equal to $s$, so the tangent vector to $\gamma(s)$ is $\frac{\de}{\de y}=-\mn(\frac{\de}{\de \ov{z}}-\frac{\de}{\de z})$
so from \eqref{evolve} we have
$$\left||\gamma'|^2_{\frac{\omega(t)}{t}}-|\gamma'|^2_{\omega_\infty}\right|\leq \frac{C}{t},$$
and so $|L_t(\gamma)-L_\infty(\gamma)|\leq \frac{C}{\sqrt{t}}$.
But for this curve $\gamma$ we have
$$L_\infty(\gamma)=\int_a^b |\gamma'(s)|_{\omega_\infty}ds= \frac{1}{\sqrt{2}}\int_{a}^b \frac{ds}{s}=\frac{1}{\sqrt{2}}\log(b/a)
=d(F(p),F(q)),$$
and so combining this with \eqref{fiber2} we have proved that
$$d_t(p,q)\leq 2\ve+L_t(\gamma)\leq \frac{C}{\sqrt{t}} + 2\ve+d(F(p),F(q)),$$
for a constant $C$ independent of $p,q$, so if $t$ is large we get
\begin{equation}\label{two1}
d_t(p,q)\leq d(F(p),F(q))+3\ve,
\end{equation}
and so also
\begin{equation}\label{three1}
d_t(G(a),G(b))\leq d(a,b)+3\ve,
\end{equation}
for all $a,b\in S^1$.

Note that from \eqref{fiber} we also have that for any $p\in S_M$ and for all $t$ large
\begin{equation}\label{four1}
d_t(p,G(F(p)))\leq \ve.
\end{equation}

Now take any two points $p,q \in S_M$ and let $\gamma$ be a curve joining $p$ to $q$ with $L_t(\gamma)=d_t(p,q)$.  Then $F(\gamma)$ is a path in $S^1$ between $F(p)$ and $F(q)$.  Write $L_g(F(\gamma))$ for the length of this curve with respect to the metric $g= \frac{1}{2} (d\log y)^2$, where we are using the coordinate $y$ on $S^1$.   We claim that $L_g(F(\gamma))= L_{\infty}(\gamma)$.  Indeed, if $V$ is a tangent vector on $S_M$ we can write locally $V = V_1 \frac{\partial}{\partial x} + V_2 \frac{\partial}{\partial y} + V_3 \frac{\partial}{\partial u} + V_4 \frac{\partial}{\partial v}$ where $w = u+\sqrt{-1}v$.  But $F_* V = V_2 \frac{\partial}{\partial y}$ and from the definition of $\omega_{\infty}$ and $g$ we see that $|F_*V|^2_g = \frac{V^2_2}{2y^2} = |V|^2_{\omega_{\infty}}$.  Applying this with $V= \gamma'$ proves the claim.

Noting that $\omega_{\infty} \le \frac{\omega(t)}{t}$ we have:
\begin{equation} \label{five1}
d(F(p), F(q)) \le L_g (F(\gamma)) = L_{\infty}(\gamma) \le L_t(\gamma) = d_t(p,q).
\end{equation}
For  $a, b \in S^1$, we can apply (\ref{five1}) to $p=G(a)$ and $q=G(b)$ to obtain
\begin{equation}\label{six1}
d(a,b)\leq d_t(G(a),G(b)).
\end{equation}
Combining \eqref{one1}, \eqref{two1}, \eqref{three1}, \eqref{four1}, \eqref{five1} and \eqref{six1} shows that the Gromov-Hausdorff distance between $(S_M, d_t)$ and $(S^1,d)$ can be made less than $3\ve$ if $t$ is large, as required.
\end{proof}

\section{The Inoue surfaces $S^{+}_{N,p,q,r;{\bf t}}$}\label{sectinoue2}

In this section we study the Inoue surfaces $S^{+}_{N,p,q,r;{\bf t}}$, starting from their construction from \cite{In}.
Let $N=(n_{ij}) \in SL(2,\mathbb{Z})$ be a matrix with two real eigenvalues $\alpha>1$ and $\frac{1}{\alpha}$.
Let $(a_1,a_2)$ and $(b_1,b_2)$ be two real eigenvectors for $N$ with eigenvalues $\alpha$ and $\frac{1}{\alpha}$ respectively (again we automatically have that $\alpha$ is irrational).

Fix integers $p,q,r$, with $r\neq 0$, and a complex number ${\bf t}$.
Using $N, a_j,$ $b_j, p,q,r$ one gets two real numbers $(c_1,c_2)$ as solutions of
the linear equation
$$(c_1,c_2)=(c_1,c_2)\cdot N^{t} + (e_1,e_2) + \frac{b_1a_2-b_2a_1}{r}(p,q),$$
where
$$e_i=\frac{1}{2} n_{i1}(n_{i1}-1)a_1b_1 + \frac{1}{2} n_{i2}(n_{i2}-1)a_2b_2 + n_{i1}n_{i2} b_1a_2,
\quad i=1,2.$$
Let $\Gamma$ be the group of automorphisms of $H\times\mathbb{C}$ generated by
$$f_0(z,w)=(\alpha z,w+{\bf t}),$$
$$f_j(z,w)=(z+a_j,w+b_jz+c_j),\quad j=1,2,$$
$$f_3(z,w)=\left(z,w+\frac{b_1a_2-b_2a_1}{r}\right).$$
Then $S^{+}_{N,p,q,r;{\bf t}}=(H\times\mathbb{C})/\Gamma$ is an Inoue surface.

Since $\alpha>1$, we can write $\mathrm{Im}{\bf t}=m\log \alpha$ for some $m\in\mathbb{R}$,
so that ${\bf t}$ is real iff $m=0$. Note that the $(1,0)$-forms on $H\times\mathbb{C}$
$$\frac{1}{y}dz,\quad dw -\frac{v-m\log y}{y}dz$$
(where $z=x+\mn y, w=u+\mn v$) are invariant under the $\Gamma$-action, and so descend to $S^{+}_{N,p,q,r;{\bf t}}$, so we
can define a Hermitian metric $S^{+}_{N,p,q,r;{\bf t}}$
\begin{equation}\label{metric}
\begin{split}
\omega_0&=\mn \left(dw -\frac{v-m\log y}{y}dz\right)\wedge \left(d\ov{w} -\frac{v-m\log y}{y}d\ov{z}\right)\\
&\ \ \ \ +\frac{1}{y^2} \mn dz\wedge d\ov{z}\\
&=\mn dw\wedge d\ov{w}+\left(\frac{1+(v-m\log y)^2}{y^2}\right)\mn dz\wedge d\ov{z} \\
&\ \ \ \ -\frac{v-m\log y}{y} \mn dw\wedge d\ov{z}
-\frac{v-m\log y}{y} \mn dz\wedge d\ov{w}.
\end{split}
\end{equation}
This was discovered by Tricerri \cite{Tr} when $m=0$ and by Vaisman \cite{Va} in general.
The key difference between the cases when ${\bf t}$ is real or not is that when $m=0$
the metric $\omega_0$ is locally conformally K\"ahler, while when $m\neq 0$ it is not, and in fact
a theorem of Belgun \cite[Theorem 7]{Be} shows that the surfaces $S^{+}_{N,p,q,r;{\bf t}}$ with ${\bf t}$ not real do not admit any locally conformally K\"ahler metric.

On the other hand, we can easily check that $\omega_0$ is Gauduchon for any value of $m$:
$$\db\omega_0=-\frac{v-m\log y-m}{2y^2} d\ov{w}\wedge dz\wedge d\ov{z}+\frac{1}{2y} d\ov{w}\wedge dw\wedge d\ov{z},$$
$$\de\db\omega_0=\frac{\sqrt{-1}}{4y^2} dw\wedge d\ov{w}\wedge dz\wedge d\ov{z} + \frac{\sqrt{-1}}{4y^2} dz\wedge d\ov{w}\wedge dw\wedge d\ov{z}=0.$$
Let now
$\omega(t)=\omega_0-t\Ric(\omega_0).$
We calculate
$$\det((g_0)_{i\ov{j}})=\frac{1}{y^2},$$
$$\Ric(\omega_0)=-\mn\de\db\log\det((g_0)_{i\ov{j}})=2\mn\de\db\log y=-\frac{1}{2y^2}\mn dz\wedge d\ov{z},$$
and so
\begin{equation}\label{evolve2}
\begin{split}
\omega(t)&=\mn dw\wedge d\ov{w}+\left(\frac{1+(v-m\log y)^2+t/2}{y^2}\right)\mn dz\wedge d\ov{z} \\
&-\frac{v-m\log y}{y} \mn dw\wedge d\ov{z}-\frac{v-m\log y}{y} \mn dz\wedge d\ov{w},
\end{split}
\end{equation}
which is a Gauduchon metric for all $t>0$. It satisfies the Chern-Ricci flow, because
$$\det(g_{i\ov{j}}(t))=\left(1+\frac{t}{2}\right)\frac{1}{y^2}=\left(1+\frac{t}{2}\right)\det((g_0)_{i\ov{j}}),$$
and so $\Ric(\omega(t))=\Ric(\omega_0)=-\frac{\de}{\de t}\omega(t).$

If we renormalize the metrics by dividing by $t$ and we let $t$ go to infinity we obtain
$$\frac{\omega(t)}{t}\to \frac{1}{2y^2}\mn dz\wedge d\ov{z},$$
smoothly on $H\times\mathbb{C}$. The limit degenerate metric is simply the pullback of the Poincar\'e metric from $H$.

\begin{theorem}\label{inoue2}
As $t$ approaches infinity we have that $\left(S^{+}_{N,p,q,r;{\bf t}}, \frac{\omega(t)}{t}\right)\overset{GH}{\to} (S^1,d)$, where $d$ is the distance function on the circle $S^1\subset\mathbb{R}^2$ with radius $\frac{\log \alpha}{2\pi}$.
\end{theorem}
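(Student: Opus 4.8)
The plan is to follow the strategy of the proof of Theorem~\ref{inoue}, adapted to $S=S^{+}_{N,p,q,r;{\bf t}}$. First I would record the topological picture: since $f_0$ acts on $\mathrm{Im}\,z$ by $y\mapsto\alpha y$ while $f_1,f_2,f_3$ fix $\mathrm{Im}\,z$, the function $\log\mathrm{Im}\,z$ descends to a smooth fibration $F:=\pi\colon S\to S^1=\mathbb{R}/(\log\alpha)\mathbb{Z}$ whose fiber $X_{y_0}=\pi^{-1}(y_0)$ is the compact $3$-manifold $\mathbb{R}^3/\Gamma'$, where $\Gamma'=\langle f_1,f_2,f_3\rangle$ is $2$-step nilpotent with $[f_1,f_2]=f_3^r$. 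On $S^1$ I put the metric $(d\log y)^2$, a flat circle of circumference $\int_1^\alpha dy/y=\log\alpha$, hence of radius $\tfrac{\log\alpha}{2\pi}$; I write $d$ for its distance function. Since $\Ric(\omega_0)=-\tfrac{1}{2y^2}\mn dz\wedge d\ov z=-\omega_\infty$, we have $\omega(t)=\omega_0+t\omega_\infty$, so $\tfrac{\omega(t)}{t}=\tfrac1t\omega_0+\omega_\infty$; one checks $0\le\omega_\infty\le\omega_0$, so $\omega_\infty\le\tfrac{\omega(t)}{t}\le 2\omega_0$ for $t\ge1$, and on the fiber distribution $\mathcal{D}:=\ker\omega_\infty=\mathrm{Span}_{\mathbb{C}}\!\big(\tfrac{\partial}{\partial w}\big)$ we have $\tfrac{\omega(t)}{t}\big|_{\mathcal{D}}=\tfrac1t\omega_0|_{\mathcal{D}}$.

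Next I would prove the analog of Lemma~\ref{dense}: for every $z_0\in H$ the image $P(\mathcal{L}_{z_0})$ of the leaf $\mathcal{L}_{z_0}=\{z_0\}\times\mathbb{C}$ is dense in the fiber $X_{\mathrm{Im}\,z_0}$. As in Lemma~\ref{dense} this follows because $P(\mathcal{L}_{z_0})$ is a leaf of the foliation by $\mathcal{D}$, so if it were closed it would be a compact complex curve, contradicting Inoue's theorem that $S$ has no curves; being contained in the compact fiber, its closure is therefore all of $X_{\mathrm{Im}\,z_0}$. (Alternatively, $\Gamma'\cdot\mathcal{L}_{z_0}=\bigcup_{(m_1,m_2)\in\mathbb{Z}^2}\mathcal{L}_{z_0+m_1a_1+m_2a_2}$ is dense in $\mathbb{R}_x\times\mathbb{C}_w$ because $a_1/a_2$ is irrational.)

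Then I would run the six Gromov--Hausdorff estimates as in the proof of Theorem~\ref{inoue}: take $F=\pi$ and let $G\colon S^1\to S$ be the (discontinuous) map determined by a fixed basepoint with a fixed lift, so $F\circ G=\mathrm{Id}$. Curves tangent to $\mathcal{D}$ have $(\omega(t)/t)$-length at most $Ct^{-1/2}$ times their $\omega_0$-length; combined with the density lemma this gives $d_t(p,q)\le\varepsilon$ whenever $F(p)=F(q)$ and $t$ is large, by concatenating a $\mathcal{D}$-curve of bounded $\omega_0$-length with a short $\omega_0$-curve. To move between fibers one uses the radial curve $s\mapsto(\sqrt{-1}\,s,w_0)$ for $s\in[a,b]\subset[1,\alpha]$ and a fixed lift $w_0$: its tangent is $\partial_y$, along it $L_\infty(\gamma)=\int_a^b ds/s=\log(b/a)=d(F(p),F(q))$, and $|L_t(\gamma)-L_\infty(\gamma)|\le Ct^{-1/2}$ since $\tfrac{\omega(t)}{t}=\tfrac1t\omega_0+\omega_\infty$ and $|\partial_y|^2_{\omega_0}$ stays bounded along this curve. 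Finally, the reverse inequalities $d(F(p),F(q))\le d_t(p,q)$ follow from $\omega_\infty\le\omega(t)/t$ together with $|F_*V|^2_{(d\log y)^2}\le|V|^2_{\omega_\infty}$, exactly as in~\eqref{five1}. Combining the six estimates shows that the Gromov--Hausdorff distance between $(S,\omega(t)/t)$ and $(S^1,d)$ tends to $0$.

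I expect the main obstacle to be the density statement of the second step, together with the bookkeeping forced by the fact that $v-m\log y$ is not a well-defined function on $S$ (only $dw-\tfrac{v-m\log y}{y}dz$ and the tensors built from it descend): one must check that the $\omega_0$-lengths used in the estimates, in particular $|\partial_y|^2_{\omega_0}=2\big(1+(v-m\log y)^2\big)/y^2$ along the chosen radial curve, are bounded uniformly, which holds because there $y\in[1,\alpha]$ and $w$ is kept at a fixed lift. Once these points are handled, the proof concludes as above.
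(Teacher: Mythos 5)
Your overall strategy mirrors the paper's: reduce to the topological picture of an $X$-bundle over $S^1$, use a density lemma for the $\mathcal{D}$-leaves inside the fibers, and then run the six Gromov--Hausdorff estimates exactly as in the $S_M$ case, with the $(d\log y)^2$ metric on $S^1$ of circumference $\log\alpha$. All of the length computations ($|\partial_y|^2_{\omega_\infty}=1/y^2$, $|\partial_y|^2_{\omega_0}=2(1+(v-m\log y)^2)/y^2$, the scaling $L_t\le Ct^{-1/2}L_0$ for $\mathcal{D}$-curves) check out.

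The one place where your primary reasoning has a gap is the density lemma. You argue: the leaf $P(\mathcal{L}_{z_0})$ is not closed (else it would be a compact curve), and ``being contained in the compact fiber, its closure is therefore all of $X_{\mathrm{Im}z_0}$.'' That inference is not automatic. In Lemma~\ref{dense} for $S_M$ it works because the fiber is $T^3$ and the closure of a leaf of a linear foliation of a torus is a subtorus, but here the fiber $X$ is a (non-abelian) nilmanifold and this structure theorem does not apply. The paper handles this by a genuinely different argument: it observes that $dx$ descends to a closed $1$-form defining $\mathcal{D}|_{X_0}$, and then invokes the dichotomy for foliations defined by closed $1$-forms (Godbillon \cite[4.3, p.46]{Go}) --- either all leaves are closed or all leaves are dense --- plus the no-curves theorem to exclude the first case. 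So your ``non-closed therefore dense'' step silently presupposes exactly this theorem.

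That said, your parenthetical alternative is a correct and in fact more elementary route to the same conclusion: since $f_1,f_2$ translate $x$ by $a_1,a_2$ with $a_1/a_2$ irrational (irrationality of $\alpha$), the orbit $\Gamma'\cdot\mathcal{L}_{z_0}=\bigcup_{(m_1,m_2)\in\mathbb{Z}^2}\{x_0+m_1a_1+m_2a_2\}\times\mathbb{C}_w$ is dense in $\mathbb{R}_x\times\mathbb{C}_w$, and the quotient map to $X_y$ is open and surjective, so the image leaf is dense in $X_y$. This replaces the paper's appeal to foliation theory with a direct Kronecker-type density argument. With the parenthetical argument made the main one (and the ``non-closed implies dense'' sentence removed or justified by citing the closed $1$-form dichotomy), the proof is complete and equivalent to the paper's.
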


Again we need to understand the topology of $S^{+}_{N,p,q,r;{\bf t}}$. As remarked by Inoue \cite{In} $S^{+}_{N,p,q,r;{\bf t}}$ is diffeomorphic to a bundle over $S^1$ with fiber a compact $3$-manifold $X$. Indeed, if we consider the subgroup $\Gamma'\subset\Gamma$ generated by $f_1, f_2, f_3$, then for each fixed $y=\mathrm{Im}z$ the group $\Gamma'$ acts on
$\{(x,y,w)\ |\ x\in\mathbb{R}, w\in\mathbb{C}\}\cong \mathbb{R}^3$ properly discontinuously and freely with quotient
a $3$-manifold $X_y$. For different values of $y$ they are all diffeomorphic to a fixed manifold $X$. Then, as in the case of $S_M$,
we can also consider $\Gamma'$ acting on the whole of $H\times\mathbb{C}$, and the
quotient is diffeomorphic to the product $X\times \mathbb{R}^+$, with the projection $\pi$
to $\mathbb{R}^+$ induced by $(z,w)\mapsto \mathrm{Im}z$ and with
$X_y=\pi^{-1}(y)$.
Then again $f_0$ descends to a map $X\times \mathbb{R}^+\to X\times \mathbb{R}^+$, because $f_0$ lies in the normalizer of $\Gamma'$ \cite[p.276]{In}.
We have that $S^{+}_{N,p,q,r;{\bf t}}=(X\times\mathbb{R}^+)/\langle f_0\rangle$.
Since $\alpha\in\mathbb{R}$ we see that $f_0$ maps the fiber $X_1$ to the fiber $X_\alpha$, and
so it induces a diffeomorphism $\psi$ of $X$ such that $S^{+}_{N,p,q,r;{\bf t}}$ is diffeomorphic to the quotient space $([1,\alpha]\times X)/\sim,$
where $(1,p)\sim (\alpha, \psi(p)),$ which is an $X$-bundle over $S^1$. We will still call
$\pi:S^{+}_{N,p,q,r;{\bf t}}\to S^1$ the projection map.

The kernel of $\omega_\infty$ on $H\times\mathbb{C}$ is the integrable distribution (i.e. foliation) $\mathcal{D}=\mathrm{Span}_{\mathbb{C}}\left(\frac{\de}{\de w}\right)$, whose leaves are of the
form $\mathcal{L}_{z_0}=\{(z_0,w)\ |\ w\in\mathbb{C}\}\subset H\times\mathbb{C}.$ We wish to determine the images
of these leaves when projected to $S^{+}_{N,p,q,r;{\bf t}}$. The main idea in the following Lemma comes from \cite{Br}.

\begin{lemma}\label{dense2} If we call $P:H\times\mathbb{C}\to S^{+}_{N,p,q,r;{\bf t}}$ the projection, then for any $z_0\in H$ the image
$P(\mathcal{L}_{z_0})$ is dense inside the fiber $X_0=\pi^{-1}(\mathrm{Im}z_0)\subset S_M$.
\end{lemma}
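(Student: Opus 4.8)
The plan is to adapt Brunella's argument \cite{Br} by lifting everything to the universal cover and exploiting the fact that the subgroup $\Gamma'=\langle f_1,f_2,f_3\rangle$ acts on the real part of $z$ by \emph{pure translations}. Fix $y_0=\mathrm{Im}\,z_0$ and write $z=x+\mn y$, $w=u+\mn v$. First I would record the structure of the fiber: $\Gamma'$ preserves the slice $Y=\{\mathrm{Im}\,z=y_0\}\cong\mathbb R^3$ (with coordinates $x,u,v$), acts on it properly discontinuously, freely and cocompactly, and the fiber $X_0=\pi^{-1}(y_0)$ is the quotient $Y/\Gamma'$. Since the only generator of $\Gamma$ that does not preserve $Y$ is $f_0$ (which scales $\mathrm{Im}\,z$ by $\alpha\neq1$), for points of $Y$ the $\Gamma$-orbit meets $Y$ in a single $\Gamma'$-orbit, so $P|_Y\colon Y\to X_0$ is exactly the covering projection, and the leaf $\mathcal L_{z_0}$ is the affine plane $\Pi_{x_0}=\{x=x_0\}\subset Y$.

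The key step, following \cite{Br}, is the observation that the $z$-component of each $f_j$ ($1\le j\le 3$) is an honest affine translation of $z$ (by $a_1$, $a_2$, $0$ respectively), so every $\gamma\in\Gamma'$ translates the $x$-coordinate by an element of the additive group $\Lambda=\mathbb Z a_1+\mathbb Z a_2\subset\mathbb R$, and correspondingly maps $\Pi_{x_0}$ diffeomorphically onto $\Pi_{x_0+\lambda}$. Hence the $\Gamma'$-saturation of the leaf is $\Gamma'\cdot\mathcal L_{z_0}=\{(x,u,v)\in Y:\ x\in x_0+\Lambda\}$. Now $\Lambda$ is dense in $\mathbb R$: $(a_1,a_2)$ is an eigenvector of $N\in SL(2,\mathbb Z)$ for the irrational eigenvalue $\alpha$, so $a_1/a_2\notin\mathbb Q$ (otherwise $N$ would send a nonzero integer vector to $\alpha$ times itself), whence $\Gamma'\cdot\mathcal L_{z_0}$ is dense in $Y$. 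Since $P|_Y$ is open and $\Gamma'\cdot\mathcal L_{z_0}$ is $\Gamma'$-invariant, $\overline{P(\mathcal L_{z_0})}=P\big(\overline{\Gamma'\cdot\mathcal L_{z_0}}\big)=P(Y)=X_0$, which is the claim.

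I expect no serious analytic obstacle; the steps that require care are the bookkeeping ones. The main point to get right is the identification $X_0\cong Y/\Gamma'$ — that is, that $f_0$ plays no role inside a single fiber — together with the verification that each $f_j$ acts on $z$ by a translation, so that the $x$-translation parts of $\Gamma'$ form precisely the dense group $\Lambda$. As in Lemma~\ref{dense} one could alternatively begin by invoking Inoue's theorem that $S^{+}_{N,p,q,r;{\bf t}}$ carries no curves to see that $P(\mathcal L_{z_0})$ cannot be closed; but here $X_0$ is a nilmanifold rather than a torus, so non-closedness by itself does not yield density, and the translation argument above is genuinely needed.
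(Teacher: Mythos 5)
Your proof is correct, and it is a genuinely different argument from the one in the paper. The paper's proof is conceptual: it observes that $dx$ descends to a nowhere-vanishing closed $1$-form on a neighborhood of $X_0$ whose kernel is $\mathcal D|_{X_0}$, invokes Inoue's result that $S^{+}_{N,p,q,r;{\bf t}}$ has no curves to conclude the leaf is not closed, and then applies the standard dichotomy for foliations defined by closed $1$-forms on compact manifolds (Godbillon \cite[4.3, p.46]{Go}): either all leaves are compact or all leaves are dense. Your proof instead is explicit and self-contained: you identify $X_0 = Y/\Gamma'$ where $Y = \{\mathrm{Im}\,z = y_0\}$, observe that the $x$-translation parts of $\Gamma'$ form exactly $\Lambda = \mathbb Z a_1 + \mathbb Z a_2$, show $\Lambda$ is dense in $\mathbb R$ because $a_1/a_2 \notin \mathbb Q$ (since otherwise $\alpha$ would be rational), and conclude that the $\Gamma'$-saturation of $\mathcal L_{z_0}$ is dense in $Y$. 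What the paper's approach buys is brevity and uniformity (it works without unpacking the generators of $\Gamma'$, and the parallel with Lemma~\ref{dense} becomes transparent); what yours buys is elementariness (no appeal to the foliation-theoretic dichotomy, nor to Inoue's nonexistence of curves) plus a sharper statement — you actually determine the closure of each leaf, not just that it is all of $X_0$.

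One side remark in your write-up is slightly off: you say that since $X_0$ is a nilmanifold rather than a torus, ``non-closedness by itself does not yield density, and the translation argument above is genuinely needed.'' In fact non-closedness \emph{does} yield density here, precisely because $\mathcal D|_{X_0}$ is defined by the closed $1$-form $dx$ on the compact manifold $X_0$ — that is exactly Godbillon's theorem, and it is how the paper proceeds. Your translation argument is an alternative, not a necessity. This does not affect the correctness of your proof.
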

\begin{proof}
The proof is similar to the one of Lemma \ref{dense}.
It is clear that $P(\mathcal{L}_{z_0})\subset X_0$.
Obviously $P(\mathcal{L}_{z_0})$ is just a leaf of the foliation $\mathcal{D}$ on $S^{+}_{N,p,q,r;{\bf t}}$. No such leaf can be closed in $S^{+}_{N,p,q,r;{\bf t}}$, since otherwise it would be a complex curve in $S^{+}_{N,p,q,r;{\bf t}}$, contradicting the fact that there exist no curves \cite{In}.  Since $\mathcal{D}\subset TX_0$, the same is true for the induced foliation $\mathcal{D}|_{X_0}$.

Consider now the $1$-form $dx$, where $x=\mathrm{Re}z$. It is invariant under $\Gamma'$, so it descends to a $1$-form on $X\times\mathbb{R}^+$, and since $f_0$ maps the fiber $X_0$ to a different fiber, it follows that $dx$ is a well-defined closed $1$-form in an open neighborhood of $X_0$. When restricted to $X_0$, the $1$-form $dx$ defines the foliation $\mathcal{D}|_{X_0}$, in the sense that $\ker dx=\mathcal{D}|_{X_0}$. Since we have just seen that no leaf of $\mathcal{D}|_{X_0}$ is closed in $X_0$, we can apply the general theory of foliations defined by closed $1$-forms \cite[4.3, p.46]{Go} and conclude that every leaf of $\mathcal{D}|_{X_0}$ is dense in $X_0$. In particular this is the case for $P(\mathcal{L}_{z_0})$.
\end{proof}

\begin{proof}[Proof of Theorem \ref{inoue2}]
With these preliminaries in place, the proof is almost identical to the proof of Theorem \ref{inoue}, and therefore we only indicate the necessary modifications.

For $t>0$ call $d_t$ the distance function on $S^{+}_{N,p,q,r;{\bf t}}$ induced by the metric $\frac{\omega(t)}{t}$, and
let $L_t(\gamma)$ denote the length of a curve $\gamma$ with respect to $\frac{\omega(t)}{t}$.
Similarly, $d_0$ and $L_0(\gamma)$ are defined using $\omega_0$, and we will denote by
$L_\infty(\gamma)$ the length of $\gamma$ computed using the degenerate metric $\omega_\infty$.

On the circle $S^1$ we put the metric $(d\log r)^2=\frac{(dr)^2}{r^2}$, and denote by $d$ its distance function, which is isometric to the standard metric on $S^1\subset \mathbb{R}^2$ with radius $\frac{\log \alpha}{2\pi}$.
For any $\ve>0$ fixed, we will show that for $t$ sufficiently large the Gromov-Hausdorff distance between
$(S^{+}_{N,p,q,r;{\bf t}}, d_t)$ and $(S^1,d)$ is less than $3\ve$.

We regard $S^1$ as $[1,\alpha]/(1\sim \alpha)$ and $S^{+}_{N,p,q,r;{\bf t}}$ as $([1,\alpha]\times X)/\sim,$
where $(1,p)\sim (\alpha, \psi(p)),$ as before.
Call $F:S^{+}_{N,p,q,r;{\bf t}}\to S^1$ the projection of the $X$-bundle, and let
$G:S^1\to S^{+}_{N,p,q,r;{\bf t}}$ be the discontinuous map induced by the map $\phi:[1,\alpha]\to [1,\alpha]\times X$
given by $\phi(x)=(x,p_0)$ for for $x\in[1,\alpha)$ and $\phi(\alpha)=(1,p_0)$, where $p_0\in X$
is a fixed basepoint.

Clearly we have $F\circ G=\mathrm{Id}$, while $G\circ F$ is a fiber-preserving discontinuous map of $S^{+}_{N,p,q,r;{\bf t}}$.
In particular for any $a\in S^1$ we have trivially
\begin{equation}\label{one2}
d(a,F(G(a)))=0.
\end{equation}
Exactly as in Theorem \ref{inoue} we prove that for any two points $p,q\in S^{+}_{N,p,q,r;{\bf t}}$ on the same $X$-fiber
we have $d_t(p,q)\leq \ve,$ if $t$ is large enough. From this we deduce that for any two points $p,q$ and for all $t$ large we have
\begin{equation}\label{two2}
d_t(p,q)\leq d(F(p),F(q))+3\ve,
\end{equation}
and so also
\begin{equation}\label{three2}
d_t(G(a),G(b))\leq d(a,b)+3\ve,
\end{equation}
for all $a,b\in S^1$ and
\begin{equation}\label{four2}
d_t(p,G(F(p)))\leq \ve.
\end{equation}

Take now any two points $p,q\in S^{+}_{N,p,q,r;{\bf t}}$ and call $\gamma$ a curve
joining $p$ and $q$ with $L_t(\gamma)=d_t(p,q)$.  Arguing as in the proof of Theorem \ref{inoue}, using the fact that $\omega_{\infty} \le \frac{\omega(t)}{t}$,
\begin{equation} \label{five2}
d(F(p), F(q)) \le L_g(F(\gamma)) = L_{\infty}(\gamma) \le L_t(\gamma) = d_t(p,q),
\end{equation}
where we are writing $L_g(F(\gamma))$ for the length of the path $F(\gamma)$ on $S^1$ between $F(p)$ and $F(q)$ with respect to the metric $g = (d\log y)^2$.
Hence also
\begin{equation}\label{six2}
d(a,b)\leq d_t(G(a),G(b)),
\end{equation}
for all $a,b\in S^1$.
Combining \eqref{one2}, \eqref{two2}, \eqref{three2}, \eqref{four2}, \eqref{five2} and \eqref{six2} shows that the Gromov-Hausdorff distance between $(S^{+}_{N,p,q,r;{\bf t}}, d_t)$ and $(S^1,d)$ can be made less than $3\ve$ if $t$ is large, as required.
\end{proof}

\section{The Inoue surfaces $S^{-}_{N,p,q,r}$}\label{sectinoue3}

The last class of Inoue surfaces are $S^{-}_{N,p,q,r}$, defined as follows.
Let $N=(n_{ij})\in GL(2,\mathbb{Z})$ be a matrix with $\det N=-1$ and with two real eigenvalues $\alpha>1$ and $-\frac{1}{\alpha}$.
Let $(a_1,a_2)$ and $(b_1,b_2)$ be two real eigenvector for $N$ with eigenvalues $\alpha$ and $-\frac{1}{\alpha}$ respectively. Fix integers $p,q,r$, with $r\neq 0$.
Define two real numbers $(c_1,c_2)$ as solutions of
the linear equation
$$-(c_1,c_2)=(c_1,c_2)\cdot N^{t} + (e_1,e_2) + \frac{b_1a_2-b_2a_1}{r}(p,q),$$
where $e_i$ are the same as for the surfaces $S^{+}_{N,p,q,r;{\bf t}}$.

Let $\Gamma$ be the group of automorphisms of $H\times\mathbb{C}$ generated by
$$f_0(z,w)=(\alpha z,-w),$$
$$f_j(z,w)=(z+a_j,w+b_jz+c_j),\quad  j=1,2,$$
$$f_3(z,w)=\left(z,w+\frac{b_1a_2-b_2a_1}{r}\right).$$
Then $S^{-}_{N,p,q,r}=(H\times\mathbb{C})/\Gamma$ is an Inoue surface.

As noticed by Tricerri \cite{Tr}, the exact same formula as in the case of $S^{+}_{N,p,q,r;{\bf t}},$ ${\bf t}$ real,
(i.e. \eqref{metric} with $m=0$)
gives a Gauduchon metric on $S^{-}_{N,p,q,r}$ too. The discussion of the smooth limit of the Chern-Ricci flow is identical.

Every surface $S^{-}_{N,p,q,r}$ has as an unramified double cover an Inoue surface $S^{+}_{N^2,p',q',r;0}$ (for suitable integers $p', q'$), so we can pull back everything upstairs and reduce to the previous section. Indeed, we have the involution of $S^{+}_{N^2,p',q',r;0}$
$$\iota(z,w)=(\alpha z,-w),$$
which satisfies $\iota^2=\mathrm{Id}$ and $S^{+}_{N^2,p',q',r;0}/\iota=S^{-}_{N,p,q,r}$.
We will denote by $p:S^{+}_{N^2,p',q',r;0}\to S^{-}_{N,p,q,r}$ the quotient map.
The projection $F:S^{+}_{N^2,p',q',r;0}\to S^1=\mathbb{R}^+/(x\sim \alpha^2 x)$ satisfies
$F(\iota(x))=F(x)$ for all $x\in S^{+}_{N^2,p',q',r;0}$.

\begin{theorem}\label{inoue3}
As $t$ approaches $+\infty$ we have that $\left(S^{-}_{N,p,q,r}, \frac{\omega(t)}{t}\right)\overset{GH}{\to} (S^1,d)$, where $d$ is the distance function on the circle $S^1\subset\mathbb{R}^2$ with radius $\frac{\log \alpha}{\pi}$.
\end{theorem}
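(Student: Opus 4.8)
The plan is to deduce Theorem~\ref{inoue3} from Theorem~\ref{inoue2} via the unramified double cover $p\colon S^{+}_{N^{2},p',q',r;0}\to S^{-}_{N,p,q,r}$, carrying out the analysis equivariantly with respect to the deck involution $\iota(z,w)=(\alpha z,-w)$.

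First I would observe that the Gauduchon metric $\omega_{0}$ on $S^{-}_{N,p,q,r}$ and its pullback to $S^{+}_{N^{2},p',q',r;0}$ are both induced by the single $\Gamma$-invariant expression \eqref{metric} with $m=0$ on $H\times\mathbb{C}$, because the group defining $S^{+}_{N^{2},p',q',r;0}$ is an index-two subgroup of the group defining $S^{-}_{N,p,q,r}$, with quotient generated by $\iota$. Since $p$ is a local biholomorphic isometry and the Chern--Ricci flow is a local equation, the explicit solutions \eqref{evolve2} (with $m=0$) on the two surfaces correspond under $p^{*}$, and so do their rescalings $\omega(t)/t$. Hence $\iota$ is a free isometric involution of $\bigl(S^{+}_{N^{2},p',q',r;0},\omega(t)/t\bigr)$ with quotient $S^{-}_{N,p,q,r}$, and the distance function $d_{t}$ on $S^{-}_{N,p,q,r}$ is exactly the quotient distance of the distance $\tilde d_{t}$ on $S^{+}_{N^{2},p',q',r;0}$ under the $\mathbb{Z}/2$-action generated by $\iota$.

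Next I would apply Theorem~\ref{inoue2} to $S^{+}_{N^{2},p',q',r;0}$, whose defining matrix $N^{2}$ has eigenvalue $\alpha^{2}$; this yields $\bigl(S^{+}_{N^{2},p',q',r;0},\omega(t)/t\bigr)\overset{GH}{\to}(\tilde C,\tilde d)$, where $\tilde C$ is the circle $\mathbb{R}^{+}/(y\sim\alpha^{2}y)$ with the metric $(d\log y)^{2}$. The essential extra point is that this convergence is in fact equivariant, in the sense recalled in Section~\ref{secthopf}, for the $\mathbb{Z}/2$-action: in the proof of Theorem~\ref{inoue2} the approximating maps are the bundle projection $F$ and a section $G$, and one checks that $F\circ\iota=R\circ F$, where $R\colon\tilde C\to\tilde C$ is the isometry induced by $y\mapsto\alpha y$, while $\iota\circ G$ and $G\circ R$ differ only by a fiberwise displacement whose $\omega(t)/t$-length tends to $0$ by the fiber-collapsing estimate (cf. \eqref{fiber}), which is available here because Lemma~\ref{dense2} applies to $S^{+}_{N^{2},p',q',r;0}$. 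Thus $\bigl(S^{+}_{N^{2},p',q',r;0},\omega(t)/t,\langle\iota\rangle\bigr)$ converges equivariantly to $(\tilde C,\tilde d,\langle R\rangle)$.

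Finally, equivariant Gromov--Hausdorff convergence passes to quotients by finite group actions (Fukaya), so $\bigl(S^{-}_{N,p,q,r},\omega(t)/t\bigr)=\bigl(S^{+}_{N^{2},p',q',r;0}/\iota\bigr)\overset{GH}{\to}\bigl(\tilde C/\langle R\rangle,\bar d\bigr)$, and it remains only to identify $\tilde C/\langle R\rangle$ --- $R$ being the rotation of $\tilde C$ by $\log\alpha$ --- with the circle $(S^{1},d)$ in the statement. I expect the genuine difficulty to lie in the middle step: verifying that the maps realizing the Gromov--Hausdorff approximation in the proof of Theorem~\ref{inoue2} can be chosen compatibly with $\iota$ up to errors that vanish as $t\to\infty$; the reduction to the double cover and the descent to the quotient are then routine. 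Alternatively, one may bypass the equivariant formalism and simply rerun the argument of Theorem~\ref{inoue2} directly on $S^{-}_{N,p,q,r}$, the only new ingredient being the density of the leaves of $\ker\omega_{\infty}$ inside the compact fibers of $S^{-}_{N,p,q,r}\to S^{1}$, which follows by pulling Lemma~\ref{dense2} back through $p$ (or from the non-existence of curves on $S^{-}_{N,p,q,r}$).
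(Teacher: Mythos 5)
Your strategy --- pull everything up to the double cover $S^+_{N^2,p',q',r;0}$, prove equivariant Gromov--Hausdorff convergence for the $\mathbb{Z}/2$-action generated by $\iota$, and then descend via Fukaya's theorem --- is exactly the paper's. The one place where you diverge is actually a place where you are \emph{more} careful than the preprint: you correctly note that $F\circ\iota = R\circ F$ with $R$ the \emph{nontrivial} half-rotation $y\mapsto\alpha y$ of the circle $\tilde C=\mathbb{R}^+/(y\sim\alpha^2 y)$, so that the equivariant limit is $(\tilde C,\tilde d,\langle R\rangle)$. The preprint instead asserts ``$F(\iota(x))=F(x)$'' and takes the limit group on $\tilde C$ to be trivial, which is not right: since $\iota(z,w)=(\alpha z,-w)$ acts on $\mathrm{Im}\,z$ by multiplication by $\alpha$, and $\alpha y\not\sim y$ in $\mathbb{R}^+/(y\sim\alpha^2 y)$, the induced map on the base is the half-rotation, just as you say.

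The consequence, however, is that you should \emph{not} expect $\tilde C/\langle R\rangle$ to match ``the circle in the statement'' --- and indeed it does not. The circle $\tilde C$ for $S^+_{N^2}$ has circumference $\log\alpha^2=2\log\alpha$ and radius $\frac{\log\alpha}{\pi}$; dividing by the half-rotation $R$ gives a circle of circumference $\log\alpha$ and radius $\frac{\log\alpha}{2\pi}$. This is confirmed by running the argument of Theorem~\ref{inoue2} directly on $S^-_{N,p,q,r}$ (your alternative suggestion): the bundle projection has base $\mathbb{R}^+/(y\sim\alpha y)$ with metric $(d\log y)^2$, again circumference $\log\alpha$. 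So the theorem's stated radius $\frac{\log\alpha}{\pi}$ is off by a factor of $2$; it should read $\frac{\log\alpha}{2\pi}$, matching the $S^+$ case in Theorem~\ref{inoue2}. Your final sentence glosses over this mismatch by claiming ``it remains only to identify $\tilde C/\langle R\rangle$ with the circle in the statement''; a correct write-up should instead conclude with the corrected radius $\frac{\log\alpha}{2\pi}$ and point out that the constant in the stated theorem (and the line ``$F(\iota(x))=F(x)$'' in the paper's proof) needs to be amended.
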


\begin{proof} We pull back the metrics $\omega_0, \omega(t)$ via $p$ to $S^{+}_{N^2,p',q',r;0}$,
and obtain the same metrics as in Theorem \ref{inoue2}, with $\iota$ acting on $S^{+}_{N^2,p',q',r;0}$ as an isometry of $\frac{p^*\omega(t)}{t}$. From Theorem \ref{inoue2} we see that $\left(S^{+}_{N^2,p',q',r;0}, \frac{p^*\omega(t)}{t}\right)$ converges in Gromov-Hausdorff to $S^1=\mathbb{R}^+/(x\sim \alpha^2 x)$
with the metric $(d\log x)^2$ (isometric to the standard metric on $S^1\subset\mathbb{R}^2$ with radius $\frac{\log \alpha}{\pi}$).

In fact this convergence happens also in the equivariant Gromov-Hausdorff sense, where the group acting on $S^1$ is the trivial group while the group acting on $S^{+}_{N^2,p',q',r;0}$ is the group of order two generated by $\iota$.
Indeed it is immediate to check this from the definition of equivariant Gromov-Hausdorff distance (see Section \ref{secthopf}), using the same maps $F:S^{+}_{N^2,p',q',r;0}\to S^1$ and $G:S^1\to S^{+}_{N^2,p',q',r;0}$ from the proof of Theorem \ref{inoue2} (where the maps from between the trivial group and the group of order two are the obvious ones).

Then \cite[Theorem 2.1]{Fuk} or \cite[Lemma 1.5.4]{Ro} imply that $\left(S^{-}_{N,p,q,r}, \frac{\omega(t)}{t}\right)\overset{GH}{\to} (S^1,d)$, where $d$ is the distance function on the circle $S^1\subset\mathbb{R}^2$ with radius $\frac{\log \alpha}{\pi}$.
\end{proof}

Theorems \ref{inoue}, \ref{inoue2} and \ref{inoue3} together complete the proof of part (b) of Theorem \ref{surfaces}.

\section{Non-K\"ahler properly elliptic surfaces}\label{sectelliptic}
Recall that a non-K\"ahler properly elliptic surface is by definition a compact complex surface $S$
with $b_1(S)$ odd and with Kodaira dimension $\kappa(S)=1$ which admits an elliptic fibration $\pi:S\to C$
to a smooth compact curve $C$. Throughout this section we will always assume that $S$ is minimal.
Kodaira \cite[Theorem 28]{Ko} has shown that the universal cover of $S$ is $H\times\mathbb{C}$.
It is also known (see for example \cite[Lemmas 1, 2]{Bri} or \cite[Theorem 7.4]{Wa}) that there is always a finite unramified covering $S'\to S$ which is also a minimal properly elliptic surface $\pi':S'\to C'$ and $\pi'$ is an elliptic fiber bundle with
$g(C')\geq 2$ (the curve $C'$ is a finite cover of $C$ ramified at the images of the multiple fibers of $\pi$).

Let us first assume that we are in this situation, so that $\pi:S\to C$ is an elliptic fiber bundle with fiber $E$, with $g(C)\geq 2$, with $S$ minimal, non-K\"ahler and $\kappa(S)=1$. It will be more convenient for us to work with $H\times\mathbb{C}^*$, so we define
$$h:H\times\mathbb{C}\to H\times\mathbb{C}^*, \quad h(z,z')=(z, e^{-\frac{z'}{2}}),$$
which is a holomorphic covering map, and we will write $(z,w)$ for the coordinates on $H\times\mathbb{C}^*$.
A theorem of Maehara \cite{Ma} (see also \cite[Lemma 5.6]{IKO}, \cite[Theorem 7.4]{Wa} and \cite[Proposition 2]{Be}) shows that there exist $\Gamma\subset SL(2,\mathbb{R})$ a discrete subgroup with
$H/\Gamma=C$, together with a complex number $\alpha\in\mathbb{C}^*$ with $|\alpha|\neq 1$ and $\mathbb{C}^*/\langle \alpha\rangle=E$ and together with a character $\chi:\Gamma\to\mathbb{C}^*$ (i.e. a group homomorphism) such that
$S$ is biholomorphic to the quotient of $H\times\mathbb{C}^*$ by the
$\Gamma\times\mathbb{Z}$-action defined by
$$\left( \left(\begin{array}{cc} a & b  \\ c & d \end{array}\right),n\right)\cdot(z,w)=\left( \frac{az+b}{cz+d}, (cz+d)\cdot w\cdot \alpha^n\cdot \chi\left(\begin{array}{cc} a & b  \\ c & d \end{array}\right) \right),$$
and the map $\pi:S\to C$ is induced by the projection $H\times\mathbb{C}^*\to H$.

The reader can check that the forms on $H\times\mathbb{C}^*$:
$$-\frac{2}{w}dw +\frac{\mn}{y}dz, \quad \frac{1}{y^2}dz\wedge d\ov{z},$$
(where $z=x+\mn y$) are invariant under the $\Gamma\times\mathbb{Z}$-action.
Therefore they descend to $S$ and we
can define a Hermitian metric on $S$ by
\begin{equation}\label{metric2}
\begin{split}
\omega_0&=\mn \left( -\frac{2}{w}dw +\frac{\mn}{y}dz \right)\wedge \left( -\frac{2}{\ov{w}}d\ov{w} -\frac{\mn}{y}d\ov{z} \right)+\frac{1}{y^2} \mn dz\wedge d\ov{z}\\
&=\frac{4}{|w|^2}\mn dw\wedge d\ov{w}+\frac{2}{y^2}\mn dz\wedge d\ov{z} \\
&+\frac{2\mn}{yw} \mn dw\wedge d\ov{z}
-\frac{2\mn}{y\ov{w}} \mn dz\wedge d\ov{w}.
\end{split}
\end{equation}
This metric was discovered by Vaisman \cite{Va} (he wrote down its pullback $h^*\omega_0$ on
$H\times\mathbb{C}$) and it is Gauduchon:
$$\db\omega_0=-\frac{\mn}{y^2\ov{w}} d\ov{z}\wedge dz\wedge d\ov{w},\quad \de\db\omega_0=0.$$
Let now
$\omega(t)=\omega_0-t\Ric(\omega_0).$
We calculate
$$\det((g_0)_{i\ov{j}})=\frac{4}{y^2|w|^2},$$
and
$$\Ric(\omega_0)=-\mn\de\db\log\det((g_0)_{i\ov{j}})=2\mn\de\db\log y=-\frac{1}{2y^2}\mn dz\wedge d\ov{z},$$
and so
\[\begin{split}
\omega(t)&=\frac{4}{|w|^2}\mn dw\wedge d\ov{w}+\left(\frac{2+t/2}{y^2}\right)\mn dz\wedge d\ov{z} \\
&+\frac{2\mn}{yw} \mn dw\wedge d\ov{z}
-\frac{2\mn}{y\ov{w}} \mn dz\wedge d\ov{w},
\end{split}\]
which is a Gauduchon metric for all $t>0$. It satisfies the Chern-Ricci flow, because
$$\det(g_{i\ov{j}}(t))=\left(1+\frac{t}{2}\right)\frac{4}{y^2|w|^2}=\left(1+\frac{t}{2}\right)\det((g_0)_{i\ov{j}}),$$
and so $\Ric(\omega(t))=\Ric(\omega_0)=-\frac{\de}{\de t}\omega(t).$

If we renormalize the metrics by dividing by $t$ and we let $t$ go to infinity we get
$$\frac{\omega(t)}{t}\to \frac{1}{2y^2}\mn dz\wedge d\ov{z},$$
smoothly on $S$. The limit degenerate metric is simply the pullback $\pi^*\omega_{\mathrm{KE}}$ of the Poincar\'e metric  from $H/\Gamma=C$, which is the base of the fibration. This metric on $C$ satisfies
$\Ric(\omega_{\mathrm{KE}})=-\omega_{\mathrm{KE}}$, and we will write $d_{\mathrm{KE}}$ for its distance function.

\begin{theorem}\label{elliptic} Let $\pi:S\to C$ be a minimal non-K\"ahler properly elliptic surface which is an elliptic bundle, and
let $\omega_0$ be the initial Gauduchon metric we just described.
As $t$ approaches $+\infty$ we have that $\frac{\omega(t)}{t}$ converge to $\pi^*\omega_{\mathrm{KE}}$
in $C^\infty(S,\omega_0)$, and also $\left(S, \frac{\omega(t)}{t}\right)\overset{GH}{\to} (C, d_{\mathrm{KE}})$.
\end{theorem}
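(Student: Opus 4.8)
The plan is to base everything on the explicit identity $\omega(t)=\omega_0+t\,\pi^*\omega_{\mathrm{KE}}$, which is immediate from the computation above since $-\Ric(\omega_0)=\frac{1}{2y^2}\mn dz\wedge d\ov{z}=\pi^*\omega_{\mathrm{KE}}$. Dividing by $t$ gives $\frac{\omega(t)}{t}=\pi^*\omega_{\mathrm{KE}}+\frac1t\omega_0$, so the smooth convergence $\frac{\omega(t)}{t}\to\pi^*\omega_{\mathrm{KE}}$ in $C^\infty(S,\omega_0)$ holds with error of order $1/t$ in every $C^k$-norm, and it remains to establish the Gromov--Hausdorff statement. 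The identity supplies two facts that drive the argument: first $\frac{\omega(t)}{t}\ge\pi^*\omega_{\mathrm{KE}}$, so $\pi$ is distance-nonincreasing from $(S,\frac{\omega(t)}{t})$ to $(C,d_{\mathrm{KE}})$; second, along any piecewise-smooth curve $\gamma$ one has $|\gamma'|^2_{\omega(t)/t}=|\gamma'|^2_{\pi^*\omega_{\mathrm{KE}}}+\frac1t|\gamma'|^2_{\omega_0}$, hence $L_{\omega(t)/t}(\gamma)\le L_{\omega_{\mathrm{KE}}}(\pi\circ\gamma)+\frac1{\sqrt t}\,L_{\omega_0}(\gamma)$.

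Next I would record that the fibres of $\pi$ collapse at a uniform rate. Restricting $\omega_0$ from \eqref{metric2} to a fibre $\pi^{-1}(z_0)$ annihilates the $dz$-terms and leaves $\frac{4}{|w|^2}\mn dw\wedge d\ov{w}$, a fixed flat torus metric on $E=\mathbb C^*/\langle\alpha\rangle$ not depending on $z_0$; since $\pi^*\omega_{\mathrm{KE}}$ restricts to zero on fibres, the metric induced by $\frac{\omega(t)}{t}$ on any fibre is exactly $\frac1t$ times this fixed flat metric, so every fibre has $\frac{\omega(t)}{t}$-intrinsic diameter $\le C/\sqrt t$. In particular $d_t(p,q)\le C/\sqrt t$ whenever $p,q$ lie in the same fibre. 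I would then fix a basepoint $w_0\in\mathbb C^*$ and a fundamental domain for $\Gamma$ acting on $H$, and let $F:=\pi:S\to C$ and let $G:C\to S$ be the (measurable, in general discontinuous) section sending $z_0\Gamma$ to $P(z_0,w_0)$ with $z_0$ the fundamental-domain lift, where $P:H\times\mathbb C^*\to S$ is the covering projection; then $F\circ G=\mathrm{Id}_C$ and $G(F(p))$ always lies in the fibre through $p$.

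The two observations combine to give, for all $p,q\in S$,
\[
d_{\mathrm{KE}}(F(p),F(q))\le d_t(p,q)\le d_{\mathrm{KE}}(F(p),F(q))+\frac{C}{\sqrt t}.
\]
The left inequality is the distance-nonincreasing property of $\pi$. For the right inequality I would lift a minimizing $\omega_{\mathrm{KE}}$-geodesic $\sigma$ from $F(p)$ to $F(q)$ to a geodesic $\tilde\sigma$ in $H$ and set $\gamma(s)=P(\tilde\sigma(s),w_0)$; then $\pi\circ\gamma=\sigma$, while reading off the $\mn dz\wedge d\ov{z}$-coefficient of \eqref{metric2} along a unit-speed geodesic of the compact base $C$ shows $L_{\omega_0}(\gamma)\le C\,L_{\omega_{\mathrm{KE}}}(\sigma)$ with $C$ independent of $p,q$, so $L_{\omega(t)/t}(\gamma)\le d_{\mathrm{KE}}(F(p),F(q))+C/\sqrt t$. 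Since $\gamma$ joins the fibre over $F(p)$ to the fibre over $F(q)$, concatenating it with two paths inside those fibres (each of length $\le C/\sqrt t$) bounds $d_t(p,q)$. Substituting this two-sided estimate, the bound $d_t(p,G(F(p)))\le C/\sqrt t$, and $d_{\mathrm{KE}}(a,F(G(a)))=0$ into the definition of Gromov--Hausdorff distance recalled in Section~\ref{secthopf} yields $d_{\mathrm{GH}}\big((S,d_t),(C,d_{\mathrm{KE}})\big)\le C/\sqrt t\to0$.

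I do not anticipate a serious obstacle: in contrast to the Inoue cases, here $\ker\pi^*\omega_{\mathrm{KE}}$ is the full fibre tangent space, so no density-of-leaves or sub-Riemannian Carath\'eodory argument is needed, and the identity $\omega(t)=\omega_0+t\,\pi^*\omega_{\mathrm{KE}}$ does most of the work. The only step needing a little care is the uniform bound $L_{\omega_0}(\gamma)\le C\,L_{\omega_{\mathrm{KE}}}(\sigma)$ on horizontal lifts; this is a direct computation with \eqref{metric2}, using that $\tilde\sigma$ is unit-speed for the pullback of $\omega_{\mathrm{KE}}$ and that $S$ is compact so all the coefficients are controlled. (The general statement of Theorem~\ref{surfaces}(c), allowing multiple fibres, then follows from this bundle case exactly as Theorem~\ref{inoue3} followed from Theorem~\ref{inoue2}, by passing to a finite unramified elliptic-bundle cover and using equivariant Gromov--Hausdorff convergence.)
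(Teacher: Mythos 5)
Your proposal is correct and follows essentially the same route as the paper: both take $F=\pi$ and $G$ a (discontinuous) section, bound the distance from above by horizontally lifting an $\omega_{\mathrm{KE}}$-geodesic via $s\mapsto P(\tilde\sigma(s),w_0)$ and concatenating with short paths inside the collapsing fibres, and bound it from below using $\frac{\omega(t)}{t}\ge\pi^*\omega_{\mathrm{KE}}$. The only cosmetic difference is that you package the computation as the identity $\omega(t)=\omega_0+t\,\pi^*\omega_{\mathrm{KE}}$ and thereby get the explicit rate $O(1/\sqrt t)$ for the Gromov--Hausdorff distance, whereas the paper phrases the same estimates in an $\varepsilon$--$t$ form.
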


Formally, the behavior is exactly the same as for the K\"ahler-Ricci flow on a K\"ahler elliptic surface $\pi:S\to C$
which is a fiber bundle over a curve $C$ of genus at least $2$ \cite{ST} (see also \cite{FZ, GTZ}).

\begin{proof}
The fact that $\frac{\omega(t)}{t}$ converge smoothly to $\pi^*\omega_{\mathrm{KE}}$ has already been proven.
We now show the Gromov-Hausdorff convergence. For any curve $\gamma\subset S$ let $L_t(\gamma)$ be its length measured in
the metric $\frac{\omega(t)}{t}$, and denote by $d_t$ the distance function of $\frac{\omega(t)}{t}$.
Let $F=\pi:S\to C$ and define a map $G:C\to S$ by sending every point $a\in C$ to
some chosen point in $S$ on the fiber $\pi^{-1}(a)$. The map $G$ is not canonical and usually discontinuous, and it satisfies
$F\circ G=\mathrm{Id}$ so
\begin{equation}\label{one4}
d_{\mathrm{KE}}(a,F(G(a)))=0,
\end{equation}
for all $a\in C$.
Since $\frac{\omega(t)}{t}$ restricted to every fiber of $\pi$ converges smoothly to zero,
it follows that for every $p\in S$ we have
\begin{equation}\label{two4}
d_t(p,G(F(p)))\leq \ve,
\end{equation}
for all $t$ large.
Given any two points $p,q\in S$, call $a=F(p), b=F(q)$ and fix a geodesic $\gamma(s)$ for the Poincar\'e metric $\omega_{\mathrm{KE}}$ that joins them.
Choose then a lift $\ti{\gamma}(s)$ to a curve in $H$ from a point $\ti{a}$ (that projects to $a$) to $\ti{b}$ (that projects to $b$).
Define a curve $\ti{\sigma}(s)=(\ti{\gamma}(s),1)$ in $H\times\mathbb{C}^*$, and call $\sigma(s)$ its projection to $S=(H\times\mathbb{C}^*)/\Gamma$,
so that $F(\sigma(s))=\gamma(s)$. The length of $\sigma(s)$ with respect to $\frac{\omega(t)}{t}$ equals the length of $\ti{\sigma}(s)$ with respect
to the pullback of $\frac{\omega(t)}{t}$ to $H\times\mathbb{C}^*$. Since the tangent vector to $\ti{\sigma}(s)$ is $(\ti{\gamma}'(s),0)$,
we see from \eqref{metric2} that
$$|\ti{\sigma}'(s)|^2_{\frac{\omega(t)}{t}}=\left(\frac{1}{2}+\frac{2}{t}\right) \frac{1}{y(s)^2},$$
where $y(s)=\mathrm{Im} \ti{\gamma}(s)$. On the other hand
$$|\ti{\gamma}'(s)|^2_{\omega_{\mathrm{KE}}}= \frac{1}{2y(s)^2},$$
and so
$$|L_t(\sigma)-d_{\mathrm{KE}}(a,b)|\leq \ve,$$
if $t$ is large. On the other hand, the $p$ and the initial point of $\sigma$ lie on the same fiber of $\pi$, so
their $d_t$-distance is less than $\ve$ for $t$ large, and similarly for $q$ and the end point of $\sigma$. Therefore
\begin{equation}\label{three4}
d_t(p,q)\leq 2\ve+L_t(\sigma)\leq d_{\mathrm{KE}}(F(p),F(q))+3\ve.
\end{equation}
Since $F\circ G=\mathrm{Id}$, we also have that
\begin{equation}\label{four4}
d_t(G(a),G(b))\leq d_{\mathrm{KE}}(a,b)+3\ve.
\end{equation}
Given now two points $p,q\in S$, let $\gamma$ be a curve joining $p$ and $q$ with $L_t(\gamma)=d_t(p,q)$.
Then arguing in a similar way to the proof of Theorem \ref{inoue},
\begin{equation}\label{five4}
d_{\mathrm{KE}}(F(p),F(q))\leq L_{\mathrm{KE}}(F(\gamma)) = L_{\pi^*\omega_{\textrm{KE}}}(\gamma) \le  L_t(\gamma)= d_t(p,q),
\end{equation}
where $L_{\pi^*\omega_{\textrm{KE}}}(\gamma)$ is the length of $\gamma$ with respect to the degenerate metric $\pi^*\omega_{\textrm{KE}}.$
This also implies that for $a, b \in S^1$,
\begin{equation}\label{six4}
d_{\mathrm{KE}}(a,b)\leq d_t(G(a),G(b)).
\end{equation}
Combining \eqref{one4}, \eqref{two4}, \eqref{three4}, \eqref{four4}, \eqref{five4} and \eqref{six4} we get the required Gromov-Hausdorff convergence.
\end{proof}

We now consider the general case, when $\pi:S\to C$ is not a fiber bundle. From \cite[Lemma 1]{Bri} or \cite[Lemma 7.2]{Wa} we see that
$\pi$ has no singular fibers, but in general it might have multiple fibers. Let us call $D\subset S$ the set of all multiple fibers of $\pi$, so that $\pi(D)$ consists of finitely many points. Then again from \cite{Ma} (see also \cite[Proposition 2]{Be}, \cite[Theorem 7.4]{Wa})
we have that $S$ is a quotient of $H\times\mathbb{C}^*$ by a discrete subgroup $\Gamma'$ of $SL(2,\mathbb{R})\times\mathbb{C}^*$, which acts by
$$\left(\left(\begin{array}{cc} a & b  \\ c & d \end{array}\right), t\right)\cdot(z,w)=\left( \frac{az+b}{cz+d}, (cz+d)\cdot w\cdot t\right),$$
and the map $\pi:S\to C$ is again induced by the projection $H\times\mathbb{C}^*\to H$.
The previous case is obtained by mapping
$SL(2,\mathbb{R})\times\mathbb{Z}\to SL(2,\mathbb{R})\times\mathbb{C}^*$ by
$(A,n)\mapsto (A,\alpha^n \chi(A))$.

If we consider the projection $\Gamma$ of $\Gamma'$ to $SL(2,\mathbb{R})$,
we now have that in general the $\Gamma$-action on $H$ is not free, so its quotient $C=H/\Gamma$ is an orbifold
(it is actually a ``good'' orbifold, i.e. a global finite quotient of a manifold, see \cite[p. 139]{Wa} where it is shown that if $C$ was a ``bad'' orbifold
then we would have that $\kappa(S)=-\infty$).
The finitely many orbifold points of $C$ are precisely equal to $\pi(D)$.

The $(1,0)$-forms on $H\times\mathbb{C}^*$
$$-\frac{2}{w}dw +\frac{\mn}{y}dz, \quad \frac{1}{y^2}dz\wedge d\ov{z}$$
are still invariant under the $\Gamma'$-action, and so
again they descend to $S$. We then define $\omega_0$ as before, and also $\omega_{\mathrm{KE}}$ which is now
an orbifold K\"ahler-Einstein metric on $C$. It is easy to see that it induces a distance function $d_{\mathrm{KE}}$ on $C$ (see e.g. \cite{Bo}). On the other hand the $(1,1)$ form $\pi^*\omega_{\mathrm{KE}}$ is
smooth on all of $S$.

\begin{theorem}\label{elliptic2} Let $\pi:S\to C$ be a general minimal non-K\"ahler properly elliptic surface with initial Gauduchon metric $\omega_0$ described above.
As $t$ approaches $+\infty$ we have that $\frac{\omega(t)}{t}$ converge to $\pi^*\omega_{\mathrm{KE}}$
in $C^\infty(S,\omega_0)$, and also $\left(S, \frac{\omega(t)}{t}\right)\overset{GH}{\to} (C, d_{\mathrm{KE}})$.
\end{theorem}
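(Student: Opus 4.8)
The smooth convergence $\frac{\omega(t)}{t}\to\pi^*\omega_{\mathrm{KE}}$ in $C^\infty(S,\omega_0)$ has already been established by the explicit computation preceding the statement, so it remains only to prove the Gromov--Hausdorff convergence $\left(S,\frac{\omega(t)}{t}\right)\overset{GH}{\to}(C,d_{\mathrm{KE}})$. The plan is to reduce to the fiber bundle case of Theorem \ref{elliptic} by passing to a finite unramified cover, exactly as was done for the Inoue surfaces $S^{-}_{N,p,q,r}$ in the proof of Theorem \ref{inoue3}, using equivariant Gromov--Hausdorff convergence.

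First I would invoke the fact recalled at the beginning of this section (see \cite{Bri, Wa}) that there is a finite unramified cover $p:S'\to S$, which after possibly passing to a further finite cover we may assume Galois, with $S'$ a minimal non-K\"ahler properly elliptic surface whose elliptic fibration $\pi':S'\to C'$ is an honest fiber bundle over a smooth curve $C'$ of genus at least $2$; moreover $p$ intertwines $\pi'$ with $\pi$ via a finite branched cover $q:C'\to C$ ramified exactly over the images $\pi(D)$ of the multiple fibers. Writing $\Sigma$ for the (finite) deck group of $p$, we have $S=S'/\Sigma$, and $\Sigma$ acts on $C'$ with $C'/\Sigma=C$ as orbifolds. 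Since all the metrics in play are built from the invariant forms $-\frac{2}{w}dw+\frac{\mn}{y}dz$ and $\frac{1}{y^2}dz\wedge d\ov{z}$ on $H\times\mathbb{C}^*$, the pullback $p^*\omega_0$ is precisely the Gauduchon metric of the fiber bundle case \eqref{metric2}, $p^*\omega(t)$ solves the Chern--Ricci flow on $S'$ with this initial datum, and $\Sigma$ acts by isometries of each $\frac{p^*\omega(t)}{t}$. Likewise $q^*\omega_{\mathrm{KE}}$ is the smooth K\"ahler--Einstein (Poincar\'e) metric $\omega_{\mathrm{KE}}'$ on $C'$, and $\Sigma$ acts isometrically on $(C',d_{\mathrm{KE}}')$.

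Next I would apply Theorem \ref{elliptic} to the cover, obtaining $\left(S',\frac{p^*\omega(t)}{t}\right)\overset{GH}{\to}(C',d_{\mathrm{KE}}')$ as $t\to\infty$, and then upgrade this to convergence in the equivariant Gromov--Hausdorff sense for the $\Sigma$-actions. For this one checks, as in the proof of Theorem \ref{inoue3}, that the maps $F=\pi':S'\to C'$ and $G':C'\to S'$ used in the proof of Theorem \ref{elliptic} (together with $\phi=\psi=\mathrm{id}_{\Sigma}$) satisfy the extra equivariance inequalities: the map $F$ is already exactly $\Sigma$-equivariant, while for the discontinuous section-type map $G'$ the points $G'(g\cdot a)$ and $g\cdot G'(a)$ lie on the same fiber of $\pi'$ for every $g\in\Sigma$, and their $\frac{p^*\omega(t)}{t}$-distance tends to zero by the fiber-collapsing estimate (compare \eqref{two4}). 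Finally, since $C'/\Sigma=C$ carries the quotient metric of $d_{\mathrm{KE}}'$, which one identifies with the orbifold K\"ahler--Einstein distance $d_{\mathrm{KE}}$ (see \cite{Bo}), the descent result for equivariant Gromov--Hausdorff limits \cite[Theorem 2.1]{Fuk} (or \cite[Lemma 1.5.4]{Ro}) yields $\left(S,\frac{\omega(t)}{t}\right)=\left(S'/\Sigma,\frac{p^*\omega(t)}{t}\right)\overset{GH}{\to}(C'/\Sigma,d_{\mathrm{KE}})=(C,d_{\mathrm{KE}})$.

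The main difficulty I anticipate is bookkeeping rather than analysis: verifying that $p^*\omega_0$ really is the metric \eqref{metric2} on the cover (matching the uniformizing group of $S'$, a finite-index subgroup of the group uniformizing $S$, with the fiber bundle data) and, more delicately, identifying the quotient metric on $C'/\Sigma$ with the orbifold distance $d_{\mathrm{KE}}$ on $C$, which needs that length-minimizing paths on $C$ may be taken to avoid the finitely many orbifold points. As an alternative to the covering argument, one could also give a direct proof along the lines of Theorem \ref{elliptic}, connecting points in distinct fibers by lifting a length-almost-minimizing path of $\omega_{\mathrm{KE}}$ that misses $\pi(D)$ to $H\times\{1\}\subset H\times\mathbb{C}^*$ and projecting to $S$; this bypasses the cover at the cost of dealing with orbifold geodesics directly.
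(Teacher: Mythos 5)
Your proposal is correct and follows essentially the same route as the paper: pass to a finite Galois cover $p:S'\to S$ where the elliptic fibration is a genuine fiber bundle, apply Theorem~\ref{elliptic} there, check that the maps $F,G$ from that proof satisfy the equivariance estimates (since $F$ is honestly equivariant and $G$ is equivariant up to points on the same fiber, whose $\frac{p^*\omega(t)}{t}$-distance collapses), and descend via \cite[Theorem 2.1]{Fuk} or \cite[Lemma 1.5.4]{Ro}. Your remark about passing to a further cover to ensure the covering is Galois, and your note that the quotient metric on $C'/\Sigma$ must be identified with the orbifold distance $d_{\mathrm{KE}}$, are small amplifications of points the paper treats implicitly, and the alternative direct argument you sketch at the end is a reasonable variant but not what the paper does.
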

\begin{proof}
The fact that $\frac{\omega(t)}{t}$ converge smoothly to $\pi^*\omega_{\mathrm{KE}}$ follows from the same calculation as in the previous case. The Gromov-Hausdorff convergence can be proved as follows. We know that there is another properly elliptic surface $\pi':S'\to C'$ which is an elliptic bundle over $C'$ with $g(C')\geq 2$, with a finite group $\Gamma''$ acting on $S'$ and $C'$ (so that $\pi'$ is $\Gamma''$-equivariant) such that the $\Gamma''$-action on $S'$ is free, while the $\Gamma''$-action on $C'$ is not, and
$\pi:S\to C$ is equal to the $\Gamma''$-quotient of $\pi':S'\to C'$. If we call $p:S'\to S$ and $q:C'\to C$ the quotient maps then we have that $p^*\omega_0$ equals the same metric $\omega_0$ from the earlier discussion, and similarly $p^*\omega(t)$ equals the evolved metrics on $S'$, and $\Gamma''$ acts by isometries of $p^*\omega(t)$. Also, the pullback distance $q^*d_{\mathrm{KE}}$ on $C'$ equals
the K\"ahler-Einstein distance function from earlier, and again $\Gamma''$ acts by isometries of $q^*d_{\mathrm{KE}}$. From Theorem \ref{elliptic} we know that
$\left(S', \frac{p^*\omega(t)}{t}\right)$ converges in Gromov-Hausdorff to $(C',q^*d_{\mathrm{KE}})$.

In fact, we claim that this convergence happens also in the $\Gamma''$-equivariant Gromov-Hausdorff sense.
Indeed, let us consider the same maps $F:S'\to C'$ and $G:C'\to S'$ from the proof of Theorem \ref{elliptic}.
Then $F$ is $\Gamma''$-equivariant, while in general $G$ is not, but for any element $g\in \Gamma''$ and for
any point $x\in C'$ the points $g\cdot G(x)$ and $G(g\cdot x)$ lie in the same fiber of $F$,
and so their distance with respect to the metric $\frac{p^*\omega(t)}{t}$ goes to zero as $t$ approaches zero (uniformly in $x$ and $g$).
This shows that the $\Gamma''$-equivariant Gromov-Hausdorff distance between $\left(S', \frac{p^*\omega(t)}{t}\right)$ and $(C',q^*d_{\mathrm{KE}})$ is less than
any $\ve>0$ if $t$ is small enough.
Then \cite[Theorem 2.1]{Fuk} or \cite[Lemma 1.5.4]{Ro} imply that $\left(S, \frac{\omega(t)}{t}\right)\overset{GH}{\to} (C, d_{\mathrm{KE}})$.
\end{proof}

Theorems \ref{elliptic} and \ref{elliptic2} together complete the proof of part (c) of Theorem \ref{surfaces}.

\section{The Mabuchi energy} \label{sectionmab}

In this section, we show that the Mabuchi energy functional from K\"ahler geometry can be defined in the setting of a complex surface with vanishing first Bott-Chern class, and that it is decreasing along the Chern-Ricci flow.

Let $M$ be a surface with vanishing first Bott-Chern class, and let $\omega_0$ be a Gauduchon metric on $M$.  By definition of the Bott-Chern class, there exists a unique function $F$ with
$$\textrm{Ric}(\omega_0) = \ddbar F, \quad \int_M e^F \omega_0^n=0.$$
Define  $\mathcal{H}$  to be the space of all Gauduchon metrics $\omega'$ on $M$ of the form $\omega' =\omega_0 + \ddbar \psi$ for some smooth function $\psi$.  We then define the \emph{Mabuchi energy} $\textrm{Mab}_{\omega_0}: \mathcal{H} \rightarrow \mathbb{R}$ by
$$\textrm{Mab}_{\omega_0} (\omega') = \int_M \left( \log \frac{\omega'^2}{\omega_0^2} - F\right)\omega'^2 + \int_M F \omega_0^2.$$
Comparing with the formula given in \cite{T}, one can check that this coincides with the Mabuchi energy in the K\"ahler setting.

Now let $\omega(t)$ solve the Chern-Ricci flow starting at $\omega_0$.  Then we may write $\omega(t) = \omega_{\varphi}:= \omega_0+ \ddbar \varphi$ where $\varphi=\varphi(t)$ solves
$$\ddt{\varphi} = \log \frac{\omega_{\varphi}^2}{\omega_0^2} - F, \quad \varphi|_{t=0} =0.$$

The result of this section is that the Mabuchi energy decreases along the Chern-Ricci flow.

\begin{proposition} \label{mab} With the notation as above, we have
$$\ddt{} \emph{Mab}_{\omega_0} (\omega_{\varphi}) = - 2 \int_M \sqrt{-1} \partial \dot{\varphi} \wedge \overline{\partial} \dot{\varphi} \wedge \omega_{\varphi} \le 0.$$
\end{proposition}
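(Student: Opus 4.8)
The plan is to exploit the fact that, along the Chern-Ricci flow, the Mabuchi energy simplifies dramatically: since $\dot{\varphi} = \log(\omega_{\varphi}^2/\omega_0^2) - F$ is precisely the integrand appearing in the first term of $\textrm{Mab}_{\omega_0}$, and $\int_M F\,\omega_0^2$ is independent of $t$, we have $\textrm{Mab}_{\omega_0}(\omega_{\varphi}) = \int_M \dot{\varphi}\,\omega_{\varphi}^2 + \int_M F\,\omega_0^2$. First I would differentiate this in $t$, using $\ddt{}\omega_{\varphi} = \ddbar\dot{\varphi}$ (hence $\ddt{}(\omega_{\varphi}^2) = 2\,\ddbar\dot{\varphi}\wedge\omega_{\varphi}$) together with $\ddot{\varphi} = \ddt{}\log\det g_{\varphi} = \Delta\dot{\varphi}$, where $\Delta$ denotes the Chern Laplacian of $\omega_{\varphi}$. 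This gives
\[
\ddt{}\,\textrm{Mab}_{\omega_0}(\omega_{\varphi}) = \int_M (\Delta\dot{\varphi})\,\omega_{\varphi}^2 + 2\int_M \dot{\varphi}\,\ddbar\dot{\varphi}\wedge\omega_{\varphi}.
\]

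The second step is to show the first term vanishes. In complex dimension two one has the pointwise identity $(\Delta u)\,\omega_{\varphi}^2 = 2\,\ddbar u\wedge\omega_{\varphi}$, so $\int_M (\Delta\dot{\varphi})\,\omega_{\varphi}^2 = 2\int_M \ddbar\dot{\varphi}\wedge\omega_{\varphi}$; integrating by parts twice on the closed manifold $M$ and using that $\omega_{\varphi}=\omega_0+\ddbar\varphi$ is again Gauduchon, i.e. $\ddb\omega_{\varphi}=0$, yields $\int_M \ddbar u\wedge\omega_{\varphi}=\int_M u\,\ddbar\omega_{\varphi}=0$ for any smooth $u$, so that term drops out.

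The third step is to integrate the surviving term by parts. Expanding $d\bigl(\sqrt{-1}\,\dot{\varphi}\,\db\dot{\varphi}\wedge\omega_{\varphi}\bigr)$ — a top form once its automatically vanishing $(1,3)$-component is discarded — and integrating over $M$ produces
\[
0 = \int_M \sqrt{-1}\,\de\dot{\varphi}\wedge\db\dot{\varphi}\wedge\omega_{\varphi} + \int_M \dot{\varphi}\,\ddbar\dot{\varphi}\wedge\omega_{\varphi} - \int_M \sqrt{-1}\,\dot{\varphi}\,\db\dot{\varphi}\wedge\de\omega_{\varphi}.
\]
The last integral is the torsion term, the only place where the non-Kähler nature of the problem enters, and the step I expect to be the crux. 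I would handle it by writing $\dot{\varphi}\,\db\dot{\varphi} = \tfrac{1}{2}\db(\dot{\varphi}^2)$ and integrating $d\bigl(\sqrt{-1}\,\dot{\varphi}^2\,\de\omega_{\varphi}\bigr)$ by parts; since $\db\de\omega_{\varphi} = -\ddb\omega_{\varphi} = 0$ by the Gauduchon condition, this forces $\int_M \sqrt{-1}\,\dot{\varphi}\,\db\dot{\varphi}\wedge\de\omega_{\varphi} = 0$. Combining the three steps,
\[
\ddt{}\,\textrm{Mab}_{\omega_0}(\omega_{\varphi}) = 2\int_M \dot{\varphi}\,\ddbar\dot{\varphi}\wedge\omega_{\varphi} = -2\int_M \sqrt{-1}\,\de\dot{\varphi}\wedge\db\dot{\varphi}\wedge\omega_{\varphi},
\]
which is $\le 0$ because $\sqrt{-1}\,\de\dot{\varphi}\wedge\db\dot{\varphi}$ is a nonnegative $(1,1)$-form ($\dot{\varphi}$ being real) and $\omega_{\varphi}>0$.

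I expect the only genuine subtlety to be the bookkeeping of the two torsion integrals — the one concealed inside $\int_M(\Delta\dot{\varphi})\,\omega_{\varphi}^2$ and the explicit $\int_M \sqrt{-1}\,\dot{\varphi}\,\db\dot{\varphi}\wedge\de\omega_{\varphi}$ — each of which must be checked to vanish purely by virtue of $\ddb\omega_{\varphi}=0$, together with verifying that every integration by parts is legitimate; the latter is automatic here, since the forms being differentiated are $3$-forms on a surface whose higher-degree components vanish for dimension reasons. Modulo this, the computation is formally identical to the Kähler case.
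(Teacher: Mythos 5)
Your argument is correct and follows essentially the same route as the paper's: differentiate the Mabuchi energy (using $\ddot{\varphi}=\Delta_{\omega_\varphi}\dot{\varphi}$ and $\ddt{}\omega_\varphi^2 = 2\ddbar\dot{\varphi}\wedge\omega_\varphi$), integrate by parts once to extract $-2\int_M\sqrt{-1}\,\de\dot{\varphi}\wedge\db\dot{\varphi}\wedge\omega_\varphi$ plus a torsion term, and then kill that torsion term (as well as the $\int_M\Delta\dot{\varphi}\,\omega_\varphi^2$ term) via $\dot{\varphi}\,\db\dot{\varphi}=\tfrac12\db(\dot{\varphi}^2)$, one more integration by parts, and the Gauduchon identity $\partial\ov\partial\omega_\varphi=\partial\ov\partial\omega_0=0$. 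The only cosmetic differences are that you simplify $\textrm{Mab}_{\omega_0}(\omega_\varphi)=\int_M\dot{\varphi}\,\omega_\varphi^2+\textrm{const}$ before differentiating and spell out explicitly why $\int_M\Delta_{\omega_\varphi}\dot{\varphi}\,\omega_\varphi^2=0$, steps which the paper leaves implicit.
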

\begin{proof}
Compute
\begin{align*}
\ddt{} \textrm{Mab}_{\omega_0} (\omega_{\varphi}) & = \int_M \Delta_{\omega_{\varphi}} \dot{\varphi} \, \omega_{\varphi}^2 + \int_M \left( \log \frac{\omega_{\varphi}^2}{\omega_0^2} - F\right) \Delta_{\omega_{\varphi}} \dot{\varphi} \, \omega_{\varphi}^2 \\
& = \int_M \dot{\varphi} \Delta_{\omega_{\varphi}} \dot{\varphi} \, \omega_{\varphi}^2 = 2 \int_M \dot{\varphi} \sqrt{-1} \partial \ov{\partial} \dot{\varphi} \wedge \omega_{\varphi}.
\end{align*}
Integrating by parts,
\begin{align*}
\ddt{} \textrm{Mab}_{\omega_0} (\omega_{\varphi})
& = - 2 \int_M \sqrt{-1} \partial \dot{\varphi} \wedge \overline{\partial} \dot{\varphi} \wedge \omega_{\varphi} + 2 \int_M \dot{\varphi} \sqrt{-1} \, \ov{\partial} \dot{\varphi} \wedge \partial \omega_0 \\
& = - 2 \int_M \sqrt{-1} \partial \dot{\varphi} \wedge \overline{\partial} \dot{\varphi} \wedge \omega_{\varphi} +  \int_M \sqrt{-1}\, \ov{\partial} \dot{\varphi}^2 \wedge \partial  \omega_0 \\
& =  - 2 \int_M \sqrt{-1} \partial \dot{\varphi} \wedge \overline{\partial} \dot{\varphi} \wedge \omega_{\varphi},
\end{align*}
using the fact that $\ddbar \omega_0=0$.
\end{proof}

Recall that Gill \cite{G} showed, in the setting of vanishing first Bott-Chern class, the Chern-Ricci flow $\omega(t)$ starting at any Hermitian metric $\omega_0$ converges in $C^{\infty}$ to a Chern-Ricci flat metric $\omega_{\infty}$.
Proposition \ref{mab} can be used to give an alternative proof of the convergence part of  Gill's theorem  in the special case when $M$ has complex dimension two and $\omega_0$ is Gauduchon.  Indeed the proof follows exactly as in the K\"ahler case.  It was first noted in unpublished work of H.-D. Cao that the Mabuchi energy decreases along the K\"ahler-Ricci flow, and to see how Proposition \ref{mab} implies convergence,  one can follow the arguments of Phong-Sturm \cite{PS}.   Alternatively, see the exposition in section 4 of \cite{SW3}.\\

\noindent {\bf Acknowledgements:} Part of this work was carried out
while the authors were visiting the Centre de Recherches Math\'ematiques in
Montr\'eal and while the first-named author was visiting the Mathematical Science
Center of Tsinghua University in Beijing. We would like to thank these institutions
for their hospitality, and also V. Apostolov for helpful
discussions.

\end{document}